\definecolor{e-mail}{rgb}{0,.40,.80}
\definecolor{reference}{rgb}{.20,.60,.22}
\definecolor{citation}{rgb}{0,.40,.80}
\newtheorem{thm}{Theorem}
\newtheorem{cor}[thm]{Corollary}
\newtheorem{lem}[thm]{Lemma}
\newtheorem{prop}[thm]{Proposition}
\theoremstyle{definition}
\newtheorem{defn}[thm]{Definition}
\theoremstyle{remark}
\numberwithin{thm}{section}
\theoremstyle{definition}
\theoremstyle{definition}
\newtheorem{eg}[thm]{Example}
\theoremstyle{definition}
\numberwithin{equation}{section}
\def\CX{{\mathbb C}}
\def\ZX{{\mathbb Z}}
\def\GL{{\rm GL}}
\def\SL{{\rm SL}}
\def\gl{{\mathfrak{gl}}}
\def\d{{\delta}}
\def\calN{{\mathcal N}}
\def\Scal{{\mathrm{Scal}}}
\def\sd{{\sigma\delta}}
\def\SD{{\sigma\d}}
\def\dd{{\partial}}
 \title[Galois groups and integrability of difference equations]{Galois groups for integrable and projectively integrable linear difference equations}
\author[C.E. Arreche]{Carlos E. Arreche}
\address{Mathematics Department, North Carolina State University, Raleigh, NC 27695}
\email{cearrech@math.ncsu.edu}
\thanks{The work of the first author was partially supported by an NSF Alliance Postdoctoral Fellowship and NSF grant CCF-0952591.}
\author[M.F. Singer]{Michael F. Singer}
\address{Mathematics Department, North Carolina State University, Raleigh, NC 27695}
\email{singer@math.ncsu.edu}
\thanks{The work of the  second author was partially supported by a grant from the Simons Foundation (\#349357, Michael Singer).}
\begin{document}

\begin{abstract}
We consider first-order linear difference systems over $\mathbb{C}(x)$, with respect to a difference operator $\sigma$ that is either a shift $\sigma:x\mapsto x+1$, $q$-dilation $\sigma:x\mapsto qx$ with $q\in{\mathbb{C}^\times}$ not a root of unity, or Mahler operator $\sigma:x\mapsto x^q$ with $q\in\mathbb{Z}_{\geq 2}$. Such a system is integrable if its solutions also satisfy a linear differential system; it is projectively integrable if it becomes integrable ``after moding out by scalars." We apply recent results of Sch\"{a}fke and Singer to characterize which groups can occur as Galois groups of integrable or projectively integrable linear difference systems. In particular, such groups must be solvable. Finally, we give hypertranscendence criteria.
\end{abstract}

\maketitle

\section{Introduction} \label{intro-sec}   In \cite{HaSi08} a differential Galois theory of linear difference equations was developed as a tool to understand the differential properties of solutions of linear difference equations. This theory associates to a system of linear difference equations $Y(\sigma(x)) = A(x) Y(x)$ a  {\it differential Galois group}. This is a linear differential algebraic group, that is a group of matrices whose entries are functions satisfying a fixed set of (not necessarily linear) differential equations. Differential properties of solutions of the linear difference equation are measured by group-theoretic properties of the associated differential Galois group. For example, a group-theoretic proof is given in \cite{HaSi08} of H\"older's Theorem that the Gamma Function satisfies no polynomial differential equation, that is, the Gamma Function is hypertranscendental. In general, one can measure the amount of differential dependence among the entries of a fundamental solution matrix of $Y(\sigma(x)) = A(x) Y(x)$ by the size of its associated {differential Galois group:} the larger the group, the fewer the differential relations {that} hold among these entries. 

This theme has been taken up in \cite{DHR15}, where the authors develop criteria that allow them to show that the generating series $F(x) = \sum_{n\in \calN} x^n$ of certain $p$-automatic sets $\calN$ are hypertranscendental. It is known \cite{B94} that  these generating series satisfy Mahler equations,
 that is, equations of the form 
 \[\sigma^m(F(x)) + a_{m-1}(x)\sigma^{m-1}(F(x))+ \ldots + a_0(x) F(x) = 0,\]
 where $\sigma(x) = x^p$ and the $a_i(x) \in \CX(x)$.  Dreyfus, Hardouin, and Roques {develop criteria in \cite{DHR15}} to ensure that  certain Mahler equations have $\SL_n$ or $\GL_n$ as  their associated differential Galois groups. Using these criteria they show, for example, that the generating series associated with the Baum-Sweet  and the Rudin-Shapiro sequences are hypertranscendental. In \cite{DHR16}, the authors develop similar criteria to ensure that the differential Galois groups of  certain $q$-difference equations are large, and apply these {criteria} to show that certain generalized $q$-hypergeometric series are hypertranscendental.

{They prove} the validity of their criteria {by appealing to} B\'ezivin's result \cite{Bez94}, which states that a power series that simultaneously satisfies a Mahler equation and a linear differential equation must be a rational function, and {to} Ramis's result \cite{Ramis92}, which states that a power series that simultaneously satisfies a $q$-difference equation and a linear differential equation must also be a rational function. {These classical results of B\'ezivin and Ramis are reproved and generalized by Sch\"afke and Singer in \cite{SS16}, where they also} classify series and other functions that simultaneously satisfy either a shift-difference equation and a linear differential equation, or {a pair} of difference equations for suitable pairs of operators.  The authors of \cite{SS16} prove these results by considering systems of linear differential and difference equations 
\begin{eqnarray*}
\dd Y(x) =A(x)Y(x), \ \ \ \ \ 
\sigma Y(x) =B(x)Y(x),
\end{eqnarray*}
 where $\dd = \frac{d}{dx}$  and $\sigma$  is a shift operator {$\sigma(x) = x+a$}, 
$q$-{dilation} operator {$\sigma(x) = qx$}, or Mahler operator {$\sigma(x) = x^p$}.

Here $A(x)$ and $B(x)$ are $n\times n$ matrices with rational function entries.
Assuming a consistency hypothesis, they show in \cite[Thm.~1]{SS16} that such a system is equivalent to a system of a very simple form and deduce, among other conclusions, {their generalizations of B\'ezivin's and Ramis's results}. 

In this paper we consider systems of difference equations
\begin{equation}\label{introeq}\sigma(Y(x)) = A(x) Y(x), \end{equation}
where $A(x) \in \GL_n(C_0(x))$, $C_0$ is an algebraically closed field, and $\sigma$ is a shift operator, $q$-dilation operator, or Mahler operator. Using Theorem 1 of \cite{SS16}, we characterize those groups  that occur as differential Galois groups under  the assumption that (\ref{introeq}) is integrable (i.e., its solutions also satisfy a linear differential system)  or projectively integrable\footnote{Integrable/projectively integrable are also called isomonodromic/projectively isomonodromic in the literature, e.g., \cite{DHR15,DHR16}} (i.e., it becomes integrable after ``moding out by scalars''; for precise definitions see  Definitions~\ref{intdef} and ~\ref{projintdef}).  In particular, we show in the integrable case that the Galois groups are abelian, and in the projectively integrable case that these Galois groups are finite abelian extensions of abelian groups.

Using these characterizations, we reprove and extend the criteria of \cite{DHR15} and \cite{DHR16} when $\sigma$ is a Mahler or $q$-dilation operator, respectively.  We also derive  corresponding criteria when $\sigma$ is a shift operator, and apply  them to show  that the solutions of certain shift-difference equations are hypertranscendental\footnote{In \cite{DHR16} the authors also consider $q'$-difference relations among solutions of a $q$-difference equation, where $q$ and $q'$ are sufficiently independent complex numbers.  Using the Galois theory developed in \cite{OW15}, they  develop Galois-theoretic criteria  to deduce $\sigma_{q'}$-transcendence results. Difference systems with such pairs of operators were also considered in \cite{SS16}. Theorem 12 of this latter paper  should yield new proofs and extensions of some of these results, but we do not consider this case or other pairs of difference operators in this paper.}. Besides  applications to questions of hypertranscendence,  our results are  also useful in  understanding the inverse problem for the differential Galois theory of difference equations  by showing that certain groups cannot occur  as differential Galois groups. These results have also been  applied in \cite{Arreche15b} to develop algorithms that compute differential Galois groups of difference equations. 

The rest of the paper is organized as follows. A key idea of this paper is that the results of \cite{SS16} allow one to reduce questions concerning the differential Galois groups of integrable and  projectively integrable  equations (\ref{introeq}) to questions concerning difference systems with constant coefficients. In Section~\ref{constant-sec} we discuss systems with constant coefficients and their Galois groups. In Section~\ref{int-sec} we characterize the differential Galois groups of integrable systems (\ref{introeq}), and in Section~\ref{proj-sec} we do the same in the projectively integrable case.  In Section~\ref{hyper-sec} we apply our results to questions of hypertranscendence  and compare our results with those of \cite{DHR15} and \cite{DHR16}.

\section{Constant systems} \label{constant-sec}

Before we discuss the differential Galois theory of difference equations, we recall the Galois theory of linear difference equations \cite{PuSi}\footnote{More general theories exist (e.g., \cite{wibmer10})  but the theory in \cite{PuSi} suffices for our purposes.}. Let $(k,\sigma)$ be a difference field, that is, a field\footnote{All fields considered in this paper are of characteristic zero.} $k$ with an automorphism $\sigma$, and assume that $k^\sigma = \{c \in k \ | \ \sigma(c) = c\}$ is algebraically closed. This theory associates to an equation $\sigma(Y) = AY$, where $A \in \GL_n(k)$, a $\sigma$-simple ring $S = k[Z,\frac{1}{\det Z}]$, called the {\it $\sigma$-Picard-Vessiot ring, which} is generated by the entries of a fundamental solution matrix $Z$ and the inverse of its determinant. This ring is unique up to $\sigma$-$k$-isomorphism and has no nilpotent elements but may have zero divisors. Its total ring of quotients $K$ is called the {\it total $\sigma$-Picard-Vessiot ring}. It has the property that the subring $K^\sigma$ of elements left fixed by $\sigma$ coincides with the corresponding subring $k^\sigma$\ of $k$. The group $G$ of $\sigma$-$k$-algebra automorphisms of this ring is called the {\it $\sigma$-Galois group} and is a linear algebraic group defined over $k^\sigma$. There is a Galois correspondence between linear algebraic subgroups of $G$ and $\sigma$-subrings of $K$, containing $k$, all of whose nonzero divisors are invertible.

We begin by proving some general results about constant difference equations and their $\sigma$-Galois groups, before applying these results to our main cases of interest in Proposition~\ref{sigmagp}.

\begin{lem}\label{lemcyclic} Let $C$ be an algebraically closed field and $\sigma:C\rightarrow C$ be the identity map. Suppose that $A\in\GL_n(C)$.
\begin{enumerate}
\item  The $\sigma$-Galois group of $\sigma(Y)=AY$ over $C$ is the Zariski closure of the group generated by $A$.
\item If $k$ is any $\sigma$-field with $k^\sigma=C$, the $\sigma$-Galois group of $\sigma(Y)=AY$ over $k$ is a closed subgroup of the Zariski closure of the group generated by $A$.\end{enumerate} \end{lem}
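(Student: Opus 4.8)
The plan is to prove both parts by directly analyzing the structure of the Picard-Vessiot ring when $\sigma$ is the identity map, where the difference equation $\sigma(Y)=AY$ becomes simply $Y=AY$ as a relation—but of course the PV construction is about finding a fundamental solution matrix $Z$ with $\sigma(Z)=AZ$. Let me think carefully about what this means.

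Let me reconsider. We have $\sigma: C \to C$ the identity, $A \in \GL_n(C)$.The plan is to prove both parts by identifying the $\sigma$-Picard–Vessiot ring explicitly with the coordinate ring of $H:=\overline{\langle A\rangle}$, the Zariski closure of the cyclic group generated by $A$. For part (1), observe that since $A\in H$ and $H$ is a group, left translation $L_A\colon h\mapsto Ah$ is an automorphism of the variety $H$; let $\sigma_H=L_A^*$ be the induced automorphism of $C[H]$, and let $Z$ be the generic point of $H$, i.e.\ the matrix of coordinate functions on $H\subseteq\GL_n$. A direct computation gives $\sigma_H(Z)=AZ$, so $C[H]$ carries a fundamental solution matrix of $\sigma(Y)=AY$. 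I would then verify the two defining properties of a $\sigma$-Picard–Vessiot ring. First, $C[H]^{\sigma_H}=C$: a $\sigma_H$-invariant regular function is constant on the left $\langle A\rangle$-orbits, which are dense in the single left $H$-orbit (all of $H$), hence constant. Second, and this is the conceptual heart, $C[H]$ is $\sigma$-simple: a proper $\sigma_H$-stable ideal would define a nonempty proper closed subset of $H$ invariant under left translation by $A$, hence (passing to closures) by all of $\overline{\langle A\rangle}=H$; since $H$ acts transitively on itself by left translation, no such subset exists. By the uniqueness of the $\sigma$-Picard–Vessiot ring recalled above, $C[H]$ is then isomorphic to the one attached to $\sigma(Y)=AY$ over $C$.

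It remains to compute its group of $\sigma_H$-$C$-automorphisms. Any such $\tau$ sends $Z$ to another fundamental solution, so $\tau(Z)=ZM$ where $M$ is fixed by $\sigma_H$; since $C[H]^{\sigma_H}=C$ we get $M\in\GL_n(C)$, and $\tau$ is determined by $M$. The substitution $Z\mapsto ZM$ is the comorphism of the right translation $h\mapsto hM$, and it preserves the vanishing ideal of $H$ — equivalently extends to an automorphism of $C[H]$ — exactly when $HM=H$, that is, when $M\in H$. Hence the $\sigma$-Galois group is precisely $H=\overline{\langle A\rangle}$, which proves (1).

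For part (2) I would run a base-change argument. Let $S=C[Z,1/\det Z]$ be the $\sigma$-Picard–Vessiot ring over $C$ from part (1), with Galois group $H$, and form the $\sigma$-$k$-algebra $k\otimes_C S$ (the two actions agree on $C$, where both are the identity), which contains the fundamental solution $1\otimes Z$. Choosing a maximal $\sigma$-ideal $\mathfrak m$ and setting $R:=(k\otimes_C S)/\mathfrak m$ yields a $\sigma$-Picard–Vessiot ring for $\sigma(Y)=AY$ over $k$, with $R^\sigma=k^\sigma=C$ and fundamental solution $\bar Z$ the image of $1\otimes Z$. The natural map $\pi\colon S\to R$, $Z\mapsto\bar Z$, is a $\sigma$-$C$-homomorphism whose kernel is a $\sigma$-ideal of $S$; since $S$ is $\sigma$-simple and $\pi(1)=1$, this kernel is zero, so $\pi$ embeds $S$ as the subalgebra $T:=C[\bar Z,1/\det\bar Z]\subseteq R$. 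Every element $\tau$ of the $\sigma$-Galois group $G_k$ over $k$ satisfies $\tau(\bar Z)=\bar ZM_\tau$ with $M_\tau\in\GL_n(C)$, so $\tau(T)=T$; restriction therefore defines an injective homomorphism $G_k\to\mathrm{Aut}_{\sigma,C}(T)=\mathrm{Aut}_{\sigma,C}(S)=H$ carrying each $\tau$ to the same matrix $M_\tau$. As $G_k$ is Zariski closed in $\GL_n(C)$ and lands inside $H$, it is a closed subgroup of $H=\overline{\langle A\rangle}$. The main obstacle here is organizational rather than deep: one must realize the $k$-Picard–Vessiot ring compatibly with $S$, namely as a quotient of $k\otimes_C S$, so that a copy of $S$ and the fundamental solution $\bar Z$ are visibly present inside $R$; once that is arranged, the $\sigma$-simplicity of $S$ forces the embedding and the conclusion follows.
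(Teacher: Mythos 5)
Your proof is correct, but part (1) takes a genuinely different route from the paper's. The paper's argument for (1) is a three-line application of the Galois correspondence: since $\sigma$ restricts to the identity on $C$, the map $\sigma$ itself is a $\sigma$-$C$-automorphism of the total Picard--Vessiot ring, hence an element of the Galois group $G$; its fixed ring is $C$ by the defining property of total Picard--Vessiot rings, so the correspondence forces $G$ to be the Zariski closure of the cyclic group generated by (the matrix of) $\sigma$, which is conjugate to $A$. You instead build the Picard--Vessiot ring explicitly as the coordinate ring $C[H]$ of $H=\overline{\langle A\rangle}$ with the left-translation difference structure and compute its automorphism group by hand. This is longer and requires verifying $\sigma$-simplicity and $C[H]^{\sigma_H}=C$ directly (in the simplicity step one should pass to the radical of the invariant ideal, since $C[H]$ is reduced but the ideal need not be radical; this is routine, as is the observation that the Zariski closure of the invariant set under the positive powers of $A$ is already all of $H$), but it buys an explicit torsor model and pins the Galois group down as $\overline{\langle A\rangle}$ itself in the standard representation rather than a conjugate of it. For part (2) your argument is essentially the paper's --- embed the constant-coefficient Picard--Vessiot ring into one over $k$ and restrict automorphisms to land in the group from part (1) --- except that you realize the Picard--Vessiot ring over $k$ as a $\sigma$-simple quotient of $k\otimes_C S$, which makes the copy of $S$ and the restriction map visible and self-contained, whereas the paper cites \cite[Lem.~19 and Cor.~20]{FSW10} and \cite[Prop.~1.23]{PuSi} to embed the total quotient ring of $S$ into an arbitrary total Picard--Vessiot ring over $k$. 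Both versions reach the same conclusion, and your reliance on the uniqueness of Picard--Vessiot rings in place of those citations is legitimate.
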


\begin{proof}{(1)}~Let $E$ be the total $\sigma$-Picard-Vessiot ring of $\sigma(Y)=AY$ over $C$, and let $G$ be its $\sigma$-Galois group. Note that $\sigma$ induces a difference automorphism of $E$ over $C$ leaving the elements of $C$ fixed. Therefore $\sigma \in G$. If $H$ is the Zariski closure of the group generated by $\sigma$, then $H \subset G$.  The fixed ring of $H$ is $C$, so by the Galois correspondence we have $H=G$.

{(2)}~Let $F$ be the total $\sigma$-Picard-Vessiot ring of $\sigma (Y) = AY$ over $k$ and $Z\in \mathrm{GL}_n(F)$ be a fundamental solution matrix of this equation. Let $S = C[Z,\frac{1}{\det Z}]$.  By \cite[Lem.~19 and Cor.~20]{FSW10}, we can embed the total quotient ring $E$ of $S$ into $F$. By \cite[Prop.~1.23]{PuSi}, $E$ is the total $\sigma$-Picard-Vessiot ring of $\sigma (Y) = AY$ over $C$.  Let $G$ be the $\sigma$-Galois group of $F$ over $k$ and $H$ be the $\sigma$-Galois group of $E$ over $C$. The map sending $\gamma \in G$ to its restriction $\gamma|_E$ to $E$ is an isomorphism of $G$ onto the $\sigma$-Galois group of $E$ over $E\cap k$, which latter group is a subgroup of $H$. The result now follows from~{(1)}.\end{proof}
 
In order to apply Lemma~\ref{lemcyclic}, we will need to know which groups occur as Zariski closures of cyclic groups. This information is given in the next lemma. We denote by $\mathbb{G}_a(C)$ the additive group of $C$ and by $\mathbb{G}_m(C)$ the multiplicative group of nonzero elements of $C$.

\begin{lem}\label{lemcyclic2} Suppose that $A\in \GL_n(C)$.
\begin{enumerate}

\item A linear algebraic group $G$ is the Zariski closure of the group generated by $A$ if and only if $G$ is isomorphic to a group of the form 
\[\mathbb{G}_m(C)^r \times \mathbb{G}_a(C)^s \times \mathbb{Z}/t\mathbb{Z},\]
where $r$ and $s$ are nonnegative integers, $s \leq 1$, and $t$ is a positive integer.

\item Suppose furthermore that $A$ is {\em diagonalizable}. A linear algebraic group $G$ is the Zariski closure of the group generated by $A$ if and only if $G$ is isomorphic to a group of the form 
\[\mathbb{G}_m(C)^r  \times \mathbb{Z}/t\mathbb{Z},\]
with $r$ and $t$ as above.
\end{enumerate}
\end{lem}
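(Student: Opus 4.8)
The plan is to reduce everything to the semisimple and unipotent parts of $A$ separately, using the multiplicative Jordan decomposition together with the structure theory of commutative linear algebraic groups over the algebraically closed characteristic-zero field $C$. Write $A = A_s A_u = A_u A_s$ with $A_s$ semisimple and $A_u$ unipotent, and set $G = \overline{\langle A\rangle}$. Since $A_s$ and $A_u$ lie in every algebraic subgroup containing $A$, both $\overline{\langle A_s\rangle}$ and $\overline{\langle A_u\rangle}$ are contained in $G$; as $G$ is commutative, it splits as the direct product $G = G_s \times G_u$ of its subgroups of semisimple and unipotent elements, and projecting $A$ onto each factor identifies $G_s = \overline{\langle A_s\rangle}$ and $G_u = \overline{\langle A_u\rangle}$. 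It therefore suffices to compute these two closures independently and multiply. In characteristic zero, $G_u$ is a connected commutative unipotent group, hence a vector group $\cong \Ga^s$.

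For the unipotent factor I would write $A_u = \exp(N)$ with $N$ nilpotent, so that $A_u^n = \exp(nN)$ lies on the one-parameter subgroup $t \mapsto \exp(tN)$. Since $\ZX$ is Zariski dense in $\Ga = C$ and this one-parameter map is a closed immersion onto its image when $N \neq 0$, the closure $\overline{\langle A_u\rangle}$ is exactly that image, so $G_u \cong \Ga$ when $N \neq 0$ and is trivial when $N = 0$. This yields the constraint $s \le 1$, and it also gives the forward implication in part (2): if $A$ is diagonalizable then $A_u = I$, forcing $s = 0$.

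The crux is the semisimple factor, which I would handle by character duality. The diagonalizable group $G_s$ is determined by its character group $M = X(G_s) \cong \ZX^r \oplus F$ with $F$ finite, and correspondingly $G_s \cong \Gm^r \times \widehat{F}$, where $\widehat F \cong F$ because $C$ contains all roots of unity. An element $A_s \in G_s$ corresponds to a homomorphism $\chi\colon M \to C^\times$, and $\overline{\langle A_s\rangle} = G_s$ precisely when the annihilator $\{m : \chi(m) = 1\}$ is trivial, that is, when $\chi$ is injective. Hence $G_s$ arises as the closure of a cyclic group if and only if $M$ embeds into $C^\times$. The free part $\ZX^r$ always embeds (choose multiplicatively independent non-roots of unity, of which $C$ has an abundance), so the only obstruction is the torsion: $F$ embeds into $C^\times$ if and only if $F$ is cyclic, since every finite subgroup of $C^\times$ is cyclic. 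This forces $F \cong \ZX/t\ZX$ and gives $G_s \cong \Gm^r \times \ZX/t\ZX$.

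Finally I would assemble both directions. For necessity, combining the above gives $G = G_s \times G_u \cong \Gm^r \times \Ga^s \times \ZX/t\ZX$ with $s \le 1$, and $s = 0$ in the diagonalizable case, which is part (2). For sufficiency I would construct an explicit witness: take $A_s = \mathrm{diag}(\lambda_1, \dots, \lambda_r, \zeta)$ with the $\lambda_i$ multiplicatively independent non-roots of unity (e.g.\ algebraically independent transcendentals) and $\zeta$ a primitive $t$-th root of unity, so that the only multiplicative relations among the eigenvalues are the forced ones and the character computation returns $\overline{\langle A_s\rangle} \cong \Gm^r \times \ZX/t\ZX$; when $s = 1$, adjoin one nontrivial unipotent Jordan block. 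Setting $A = A_s A_u$ block-diagonally and invoking the product decomposition of the first step yields $\overline{\langle A\rangle} \cong \Gm^r \times \Ga^s \times \ZX/t\ZX$. I expect the torsion-cyclicity step for $G_s$ to be the main obstacle; the rest is bookkeeping with the Jordan decomposition and the vector-group structure of unipotent groups in characteristic zero.
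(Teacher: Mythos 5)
Your proposal is correct and follows essentially the same route as the paper: multiplicative Jordan decomposition $A=A_sA_u$, the splitting $G\simeq G_s\times G_u$ with $G_u$ the closure of $\langle \exp(N)\rangle$ (giving $s\le 1$), and the same explicit witness (multiplicatively independent units, a primitive $t$-th root of unity, and one unipotent Jordan block) for sufficiency. The only cosmetic difference is that you identify the finite part of $G_s$ as cyclic via character duality and the cyclicity of finite subgroups of $C^\times$, where the paper simply observes that the image of $A_s$ in the finite factor $H$ is Zariski-dense, hence generates $H$.
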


\begin{proof}(1)~Let $G$ be the Zariski closure of the group generated by $A$. From \cite[\S15.5]{humphreys}, we may write $A = A_sA_u$, {where} $A_s, A_u \in G$, $A_s$ is semisimple, $A_u$ is unipotent, and $A_sA_u=A_uA_s$. Let $G_s$ be the Zariski closure of the group generated by $A_s$, and let $G_u$ be the Zariski closure of the group generated by $A_u$.  We have that $G_s \cap G_u = \{id\}$ and $A \in G_sG_u$, so $G \simeq G_s\times G_u$. By \cite[Lem.~C of \S15.1]{humphreys}, $G_u \simeq\mathbb{G}_a(C)$. Since $A_s$ is diagonalizable, the group $G_s$ is diagonalizable and therefore isomorphic to a group of the form $\mathbb{G}_m(C)^r  \times H$, where $H$ is a finite group\cite[\S16.2]{humphreys}. Let $\pi:\mathbb{G}_m(C)^r  \times H \rightarrow H$ be the canonical projection. The image of $A_s$ generates a Zariski-dense subgroup of $H$, so $H$ is cyclic.

Assume $G$ is isomorphic to $\mathbb{G}_m(C)^r \times \mathbb{G}_a(C)^s \times \mathbb{Z}/t\mathbb{Z}$. Let $a= (a_1, \ldots , a_r) \in \mathbb{G}_m(C)^r$,  where $a_1, \ldots , a_r$ are {\it multiplicatively independent}, that is, they satisfy the property: if $\prod_{i=1}^r a_i^{m_i} = 1$ for some $ m_1, \ldots , m_r \in \mathbb{Z}$, then $m_1=\ldots = m_r = 0$. If $H$ is a proper subgroup of $\mathbb{G}_m(C)^r$ then the elements of $H$ satisfy a relation of the form $\prod_{i=1}^r a^{m_i} = 1$ \cite[\S16.1]{humphreys}, so $a$ generates a Zariski-dense subgroup of $\mathbb{G}_m(C)^r$. If $s=1$, let $id \neq b \in \mathbb{G}_a$. We have that the Zariski closure of the group generated by $b$ equals $\mathbb{G}_a(C) $ \cite[\S15.1]{humphreys}. Let $c$ be a generator of $\mathbb{Z}/t\mathbb{Z}$. The element $y = (a,b,c)$ generates a Zariski-dense subgroup of $G$. To see this, let $H$ be the Zariski closure of the group generated by $y$. The elements $(a,id,c)$ and $(id, b, id)$ lie in $H$ so 
$\mathbb{G}_m(C)^r \times  \mathbb{Z}/t\mathbb{Z}\subset H$ and $\mathbb{G}_a(C)^s \subset H$.

(2)~This follows from the proof of~{(1)}.\end{proof}

We will now consider three concrete examples of difference fields $(k, \sigma)$. In each of these cases, we have that $k^\sigma=C$, which we always assume to be algebraically closed.\vspace{.1in}

\begin{itemize}
\item[{\rm case s:}] The field $k:=C(x)$ and $\sigma$ is the shift operator defined by $\sigma(x)=x+1$.
\item[{\rm case q:}] The field $k:=C(x)$ and $\sigma$ is the $q$-dilation operator defined by $\sigma(x)=qx$ for some $q\in C$, $q\neq0$ and not a root of unity.
\item[{\rm case m:}] The field $k:=C(\mathrm{log}(x),\{x^{1/\ell}\ | \ \ell\in\mathbb{N}\})$ and $\sigma$ is the Mahler operator defined by $\sigma(x^\alpha)=x^{q\alpha}$ and $\sigma(\mathrm{log}(x))=q\mathrm{log}(x)$ for some integer $q\geq2$.
\end{itemize}

\vspace{.1in}

The following result characterizes which linear algebraic groups occur as $\sigma$-Galois groups for constant linear difference equations over the difference fields $(k,\sigma)$ corresponding to cases s, q, and m.

\begin{prop} \label{sigmagp}Suppose that $A\in\GL_n(C)$.
\begin{enumerate} 
\item A linear algebraic group $G$ is a $\sigma$-Galois group for $\sigma(Y) = AY$ over $C$ if and only if it is isomorphic to a group of the form
\[\mathbb{G}_m(C)^r \times \mathbb{G}_a(C)^s \times \mathbb{Z}/t\mathbb{Z},\]
where $r$ and $s$ are nonnegative integers, $s \leq 1$, and $t$ is a positive integer.

\item In case s, a linear algebraic group $G$ is a $\sigma$-Galois group for $\sigma(Y) = AY$ over $k$ if and only if it is isomorphic to a group of the form
\[\mathbb{G}_m(C)^r \times \mathbb{Z}/t\mathbb{Z}, \]
where $r$ is a nonnegative integer and $t$ is a positive integer.

\item In case q, a linear algebraic group $G$ is a $\sigma$-Galois group for $\sigma(Y) = AY$ over $k$ if and only if it is isomorphic to a group of the form
\[\mathbb{G}_m(C)^r \times \mathbb{G}_a(C)^s \times \mathbb{Z}/t\mathbb{Z} \times \mathbb{Z}/p\mathbb{Z},\]
where $r$ and $s$ are nonnegative integers, $s \leq 1$, and $t$ and $p$  are positive integers.

\item In case m, a linear algebraic group $G$ is a $\sigma$-Galois group for $\sigma(Y) = AY$ over $k$ if and only if it is isomorphic to a group of the form
\[\mathbb{G}_m(C)^r \times \mathbb{G}_a(C)^s \times \mathbb{Z}/t\mathbb{Z} \times \mathbb{Z}/p\mathbb{Z},\]
where $r$ and $s$ are nonnegative integers, $s \leq 1$, and $t$ and $p$  are positive integers.

\end{enumerate}
\end{prop}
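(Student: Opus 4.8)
The plan is to reduce each case to an analysis of the semisimple and unipotent constituents of $A$, as in the proof of Lemma~\ref{lemcyclic2}. Part~(1) is immediate: by Lemma~\ref{lemcyclic}(1) the $\sigma$-Galois group over $C$ is the Zariski closure $\overline{\langle A\rangle}$, and Lemma~\ref{lemcyclic2}(1) identifies exactly the groups arising this way, so the ``if'' direction also follows from Lemma~\ref{lemcyclic2}(1) by exhibiting a matrix whose cyclic group has the prescribed closure. For parts~(2)--(4) I would write $A=A_sA_u$ (Jordan--Chevalley) and show that the $\sigma$-Galois group $G$ over $k$ decomposes as $G_s\times G_u$, where $G_s$ and $G_u$ are the $\sigma$-Galois groups of $\sigma(Y)=A_sY$ and $\sigma(Y)=A_uY$. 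By Lemma~\ref{lemcyclic}(2) each is a closed subgroup of its counterpart over $C$, so $G_u$ is trivial or $\mathbb{G}_a(C)$ and $G_s$ is diagonalizable; the splitting is automatic once both projections are computed, since by Goursat's lemma a diagonalizable group and a vector group share no nontrivial common subquotient and hence $\mathrm{Hom}$ between them vanishes. The real work is to pin down which subgroups occur once $\sigma$ acts nontrivially on $k$.

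The heart of the matter is to determine, for each case, the set of constants that are $\sigma$-coboundaries in $k$,
\[ U_k := \set{\sigma(u)/u \ | \ u \in k^\times}\cap C^\times, \]
together with whether $\sigma(y)-y=1$ is solvable in $k$. For diagonal $A_s=\mathrm{diag}(\lambda_1,\dots,\lambda_n)$, the standard description of $\sigma$-Galois groups of diagonal systems from \cite{PuSi} identifies the character group of $G_s$ with $\mathbb{Z}^n/L$, where $L=\set{(m_i) \ | \ \prod_i\lambda_i^{m_i}\in U_k}$; equivalently $G_s$ is dual to the image of $\mathbb{Z}^n$ in $C^\times/U_k$ under $(m_i)\mapsto\prod_i\lambda_i^{m_i}$. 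I would then compute $U_k$ explicitly. In case~s a degree and leading-coefficient argument shows $\sigma(u)/u\in C^\times$ forces $u\in C$, so $U_k=\{1\}$; moreover $\sigma(y)-y=1$ is solved by $y=x$, so $G_u$ is trivial. In cases~q and~m one finds $U_k=q^{\mathbb Z}$, witnessed by $\sigma(x^m)/x^m=q^m$ and $\sigma((\log x)^m)/(\log x)^m=q^m$ respectively, while $\sigma(y)-y=1$ has no solution in $k$ (again by a leading-term analysis, now in the $x$- or $\log x$-grading preserved by $\sigma$), so $G_u\cong\mathbb{G}_a(C)$ survives.

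It then remains to read off the possible $G_s$ from the structure of $C^\times/U_k$: a finitely generated subgroup has the form $\mathbb{Z}^r\oplus F$ with $F$ finite, and $F$ lies in the torsion subgroup. In case~s this torsion is that of $C^\times$, namely $\mu_\infty\cong\mathbb{Q}/\mathbb{Z}$, of rank one, so $F$ is cyclic and $G_s\cong\mathbb{G}_m(C)^r\times\mathbb{Z}/t\mathbb{Z}$, yielding~(2). In cases~q and~m, because $q$ is not a root of unity, the torsion of $C^\times/q^{\mathbb Z}$ acquires a second copy of $\mathbb{Q}/\mathbb{Z}$ from the $q$-direction: an element is torsion iff it has the form $\zeta q^{a/m}$, and recording the root-of-unity part together with $a/m\bmod\mathbb{Z}$ gives $\mu_\infty\oplus\mathbb{Q}/\mathbb{Z}\cong(\mathbb{Q}/\mathbb{Z})^2$. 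Hence $F$ is generated by at most two elements, $F\cong\mathbb{Z}/t\mathbb{Z}\times\mathbb{Z}/p\mathbb{Z}$, and $G_s\cong\mathbb{G}_m(C)^r\times\mathbb{Z}/t\mathbb{Z}\times\mathbb{Z}/p\mathbb{Z}$; combined with $G_u$ this gives~(3) and~(4). This extra cyclic factor, absent over $C$, is precisely the new phenomenon in the $q$-dilation and Mahler cases.

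For the converse (realizability) I would exhibit explicit matrices: a diagonal block $\mathrm{diag}(\lambda_1,\dots,\lambda_r)$ with $\lambda_i$ multiplicatively independent modulo $U_k$ to produce $\mathbb{G}_m(C)^r$, a root of unity of order $t$ for $\mathbb{Z}/t\mathbb{Z}$, the value $q^{1/p}$ (which has order $p$ in $C^\times/q^{\mathbb Z}$, and whose equation $\sigma(y)=q^{1/p}y$ yields $y^p\in\{\gamma x\}$ or $\{\gamma\log x\}$, hence Galois group $\mathbb{Z}/p\mathbb{Z}$) for the extra factor in cases~q and~m, and a single Jordan block for $\mathbb{G}_a(C)$ when $s=1$; algebraic closedness of $C$ guarantees these choices exist and are independent in $C^\times/U_k$. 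The main obstacle I anticipate is the precise determination of $U_k$ and of the torsion of $C^\times/q^{\mathbb Z}$, since everything downstream---the disappearance of the additive part in case~s, its survival in cases~q and~m, and the emergence of exactly one extra cyclic factor---hinges on these two computations; a secondary technical point is the character-group description of $G_s$ and the direct-product splitting $G\cong G_s\times G_u$, which I would draw from \cite{PuSi} and the subquotient argument above.
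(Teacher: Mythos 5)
Your proposal is correct in substance, but for the hard (``only if'') direction it takes a genuinely different route from the paper. The paper first embeds the $\sigma$-Galois group over $k$ into the Zariski closure of $\langle A\rangle$ via Lemma~\ref{lemcyclic}(2) (resting on \cite{FSW10}), gauges away the unipotent part in case s, and then gets the bound on the finite part purely group-theoretically from \cite[Prop.~1.20]{PuSi} (the component group of a $\sigma$-Galois group is cyclic) in case s and from \cite[Prop.~12.2.1]{PuSi} (at most two generators) in case q; case m is then handled not by direct computation but by showing that the total Picard--Vessiot ring satisfies $L\cap k=C(\log(x))$, so the group over $k=C(\log(x),\{x^{1/\ell}\})$ equals the group over $C(\log(x))\cong(C(x),\sigma:x\mapsto qx)$, reducing case m to case q. You instead compute the coboundary group $U_k=\{\sigma(u)/u\}\cap C^\times$ and read off the finite part from the torsion of $C^\times/U_k$ ($\mathbb{Q}/\mathbb{Z}$ in case s, $(\mathbb{Q}/\mathbb{Z})^2$ in cases q and m). Your route is more explicit and explains conceptually where the second cyclic factor comes from, and it would in principle compute the exact group for a given $A$; the paper's route avoids the two points where your plan still owes real work: (i) the identification $G\cong G_s\times G_u$ with the Galois groups of the separate systems $\sigma(Y)=A_sY$ and $\sigma(Y)=A_uY$ (the splitting of $G$ into diagonalizable and unipotent parts is fine by your Goursat argument, but matching the factors to those two systems needs a Tannakian or gauge-transformation justification), and (ii) the verification that $U_k=q^{\mathbb{Z}}$ in case m, which is the genuinely delicate computation there --- $k$ is a non-finitely-generated union of the fields $C(\log(x),x^{1/\ell})$ on which $\sigma$ acts by $x^{1/\ell}\mapsto(x^{1/\ell})^q$ rather than by scaling, so one must rule out coboundaries arising from the $x^{1/\ell}$ and from mixed terms, not just run a leading-term argument in $\log(x)$. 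Neither point is a fatal obstruction, but both require more than the one line you give them; the realizability direction of your proposal coincides with the paper's.
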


\begin{proof} (1)~follows directly from Lemmas~\ref{lemcyclic}(1) and \ref{lemcyclic2}(1).

(2)~Let  $G$ be isomorphic to  $\mathbb{G}_m(C)^r  \times \mathbb{Z}/t\mathbb{Z}$. Let $  a_1, \ldots, a_r \in C$ be multiplicatively independent, let $A_0:=\bigl(\begin{smallmatrix} 1 & 1 \\ 0 & 1\end{smallmatrix}\bigr)$, and let $A = \mathrm{diag}( a_1, \ldots , a_r, A_0, \zeta)$, where $\zeta$ is a primitive {$t$-{th}} root of unity. From the proof of Lemma~\ref{lemcyclic2} we see that the Zariski closure of the group generated by $A$ is isomorphic to $\mathbb{G}_m(C)^{r} \times \mathbb{G}_a(C) \times \mathbb{Z}/t\mathbb{Z}$, and so the $\sigma$-Galois group of $\sigma(Y) = AY$ over $C$ is this group as well. Let $K$ be the total $\sigma$-Picard-Vessiot ring of $\sigma(Y) = AY$ over $C$. Considering the equation $\sigma(Y) =A_0Y$, one sees that $K$ contains a solution $u$ of the equation $\sigma(u) = u+1$. Therefore $C(u)$ is $\sigma$-isomorphic to $k$. Furthermore, the $\sigma$-Galois group of $K$ over $C(u)$ is isomorphic to $\mathbb{G}_m(C)^r  \times \mathbb{Z}/t\mathbb{Z} $. This yields one implication of~{(2)}.

Given $A \in \mathrm{GL}_n(C)$, let $G$ be the $\sigma$-Galois group of $\sigma(Y) = AY$ over $k$. We begin by showing that this equation is gauge equivalent over $k$ to an equation $\sigma(Y) = \tilde{A}Y$, where $\tilde{A} \in \mathrm{GL}_n(C)$ is diagonalizable. Write  $A=A_sA_u$ with $A_s,A_u \in \mathrm{GL}_n(C)$, where $A_s$ is diagonalizable, $A_u$ is unipotent, and $A_sA_u = A_uA_s$. Let $A_u = \exp(N)$, where $N$ is nilpotent, and let $M := \exp(-x N)$. Note that $M\in\GL_n(k)$ commutes with $A_s$ and $\sigma(M) = MA_u^{-1}$. Making the gauge transformation $V = MY$, we have 

\[\sigma(V)  =  \sigma(M)AM^{-1}V = MA_u^{-1}A_uA_sM^{-1} V= A_sV.\] 

Therefore we may assume that $A$ is diagonalizable. Lemmas~\ref{lemcyclic}(2) and \ref{lemcyclic2}(2) imply that $G$ is a closed subgroup of a group isomorphic to $\mathbb{G}_m(C)^r  \times \mathbb{Z}/t\mathbb{Z}$. The unipotent radical of $G$ must therefore be trivial, so $G$ is isomorphic to a group of the form $\mathbb{G}_m(C)^r\times H$, where $H$ is a finite group. Now \cite[Prop.~1.20]{PuSi} implies that $G/G^0\simeq H$ is a cyclic group, so $G$ is of the desired form.

(3)~Let $G$ be isomorphic to  $\mathbb{G}_m(C)^r \times \mathbb{G}_a(C)^s \times \mathbb{Z}/t\mathbb{Z} \times \mathbb{Z}/p\mathbb{Z}$, and assume that $s=1$ (the case when $s=0$ is similar but easier). Let $  a_1, \ldots, a_r \in C$ be such that $q, a_1, \ldots, a_r$ are multiplicatively independent. Let $A = \mathrm{diag}(q^{1/p}, a_1, \ldots , a_r, A_0, \zeta)$, where $A_0$ is as above and $\zeta$ is a primitive {$t$-{th}} root of unity. From the proof of Lemma~\ref{lemcyclic2} we see that the Zariski closure of the group generated by $A$ is isomorphic to $\mathbb{G}_m(C)^{r+1} \times \mathbb{G}_a(C)^s \times \mathbb{Z}/t\mathbb{Z}$, and so the $\sigma$-Galois group of $\sigma(Y) = AY$ over $C$ is this group as well. Let $K$ be the total $\sigma$-Picard-Vessiot ring of this equation. Then $K$ contains a solution $u$ of the equation $\sigma(u) = q^{1/p}u$, and so $v = u^p$ satisfies $\sigma(v) = q v$.  Therefore $C(v)$ is $\sigma$-isomorphic to $k$. Furthermore, the $\sigma$-Galois group of $K$ over $C(v)$ is isomorphic to $\mathbb{G}_m(C)^r \times \mathbb{G}_a(C)^s \times \mathbb{Z}/t\mathbb{Z} \times \mathbb{Z}/p\mathbb{Z}$. This yields one implication of~{(3)}.

Given $A \in \mathrm{GL}_n(C)$, let $G$ be the $\sigma$-Galois group of $\sigma(Y) = AY$ over $k$. Lemmas~\ref{lemcyclic}(2) and \ref{lemcyclic2}(1) imply that $G$ is a closed subgroup of a group isomorphic to $\mathbb{G}_m(C)^r \times \mathbb{G}_a(C)^s \times \mathbb{Z}/t\mathbb{Z}$. The unipotent radical of $G$ must therefore be a subgroup of $\mathbb{G}_a(C)^s$. Furthermore, since $G$ is abelian, it is isomorphic to a group of the form $G_s\times G_u$, where $G_s$ is a product of copies of $\mathbb{G}_m(C)$ and a finite abelian group $H$, and $G_u$ is a product of copies of $\mathbb{G}_a(C)$. Therefore the identity component $G^0$ of $G$ is of the form $\mathbb{G}_m(C)^r \times \mathbb{G}_a(C)^s$, where $r$ is a nonnegative integer and $s=0$ or $1$. Now \cite[Prop.~12.2.1]{PuSi} implies that $G/G^0\simeq H$ is a finite  abelian group with at most two generators, so $G$ is of the desired form.

(4) We will prove this result by reducing the problem to (3)~above.  Note that the difference field $C(\log(x))$ with $\sigma(\log(x)) = q\log(x)$ is isomorphic as a difference field to $C(x)$ with $\sigma(x) = qx$.  Therefore necessary and sufficient conditions for a group to be a $\sigma$-Galois group of $\sigma(Y) = AY$ are the same for these fields, {since $A\in\GL_n(C)$}.  We will show that the $\sigma$-Galois group for such an equation over $k$ is the same as the $\sigma$-Galois group over $F := C(\log(x))$. 

Let $K$ be the total Picard-Vessiot ring for $\sigma(Y) = AY$ over $k$, and let $L$ be the total Picard-Vessiot ring for the same equation over $F$. Let $G$ be the $\sigma$-Galois group of $K$ over $k$, and let $H$ the $\sigma$-Galois group of $L$ over $F$.  As in the proof of Lemma~\ref{lemcyclic}, the map sending $\gamma \in G$ to its restriction $\gamma|_{L}$ is an isomorphism of $G$ onto the $\sigma$-Galois group of $L$ over $L \cap k$, which is a subgroup of $H$ (note that $L \cap k$ is again a $\sigma$-field). If we can show that $E:= L \cap k = F$, then {it will  follow that} these two $\sigma$-Galois groups {are} the same. 

We claim that $E$ is finitely generated as a field extension of $C$. We have that $L$ is the total quotient ring of a Picard-Vessiot ring $S = F[Z, 1/\det(Z)]$ over $F$. {It follows from \cite[Cor.~1.16]{PuSi} that} $S = S_1 \oplus \ldots \oplus S_t$, where each $S_i$ is a domain and finitely generated over $F$. Therefore $L = R_1\oplus \ldots \oplus R_t$, where each $R_i$ is {a finitely generated field extension of} $F$. Letting $\pi_1:L \rightarrow R_1$ be the projection map, we have that $\pi_1$ is an injection of $E$ into $R_1$. Since a subfield of a finitely generated field is finitely generated, we have that $E$ is finitely generated over $F= C(\log(x))$, and so finitely generated over $C$.

Now assume that $E \neq F$. This implies that for some integer $\ell$, there exist nonzero polynomials $f$  and $g$ with coefficients in $F$, at least one of them with positive degree, such that $f(x^{1/\ell})/ g(x^{1/\ell}) \in E$.  This implies that $x^{1/\ell}$ is algebraic over $ E$ of some positive degree, say $N$. Applying $\sigma^{-1}$ to the coefficients of the minimal polynomial {of $x^{1/\ell}$ over $E$}, we see that $x^{1/q\ell}$ is also algebraic over $ E$ of degree $N$. Since $E(x^{1/\ell}) \subset E(x^{1/q\ell})$ we have $E(x^{1/\ell}) = E(x^{1/q\ell})$. In this way we see that $E(x^{1/\ell}) = E(x^{1/q^m\ell})$ for all $m \in \mathbb{N}$.  Since $E$ is finitely generated over $C$, we have that $E(x^{1/\ell})$ is also finitely generated over $C$. This implies that $C(\{ x^{1/q^m\ell} \ | \ m \in \mathbb{N}\}) \subset E(x^{1/\ell})$ is finitely generated over $C$, and in particular {that} the degree of $C(\{ x^{1/q^m\ell} \ | \ m \in \mathbb{N}\})$ over $C(x^{1/\ell})$ {is} bounded. This contradiction implies that $E = L \cap k = F$, and so $G=H$. \end{proof}

In the next section we apply Proposition~\ref{sigmagp} to completely characterize those linear algebraic groups that occur as Galois groups for \emph{integrable} linear difference equations.

\section{Integrable systems} \label{int-sec}

In the {\em differential} Galois theory of linear difference equations, we let $(k, \sigma, \d)$ be a $\SD$-field, that is, a field $k$ with an automorphism $\sigma$ and a derivation $\d$ such that $\sigma \d = \d \sigma$.  Following the presentation in \cite{HaSi08}, we assume that $C = k^\sigma $ is a differentially closed field\footnote{See \cite{HaSi08} for definitions and properties. Presentations assuming only that $C$ is algebraically closed are given in \cite{DiHa12, Wib12}.}. For a difference equation $\sigma(Y) = AY$ with $A\in \GL_n(k)$, one has a {\it $\SD$-Picard-Vessiot ring} $R=k\{Z,\frac{1}{\det Z}\}_\d$, which is a $\SD$-simple $k$-algebra differentially generated by the entries of a fundamental solution matrix $Z$, the inverse of its determinant, and all their derivatives. This ring is unique up to $\SD$-$k$-isomorphism and has no nilpotent elements but may have zero divisors. Its total ring of quotients $K$ is called the {\it total $\SD$-Picard-Vessiot ring} and has the property that $K^\sigma = k^\sigma$. The group $G$ of $\SD$-$k$-algebra automorphisms is called the {\it $\SD$-Galois group} and can be identified with a linear {\it differential} algebraic group over $C$, that is, $G\subset \GL_n(C)$ is defined by a set of algebraic differential equations over $C$. Finally, there is a Galois correspondence between differential subgroups of the $\SD$-Galois group and $\SD$-subrings of $K$, containing $k$, all of whose nonzero divisors are invertible.

 Note that any linear algebraic group $G$ defined over $C$ is {\it a fortiori} a linear differential algebraic group. Given a linear differential algebraic group $G$, one can consider the set of elements of $G$ whose entries belong to $C_0 = C^\d = \{c \in C \ | \ \d(c) = 0\}$, and this set is also a linear differential algebraic group. We will distinguish these two groups by denoting the first group by $G(C)$ and the second group by $G(C_0)$.

The following result, proved in \cite[Prop.~6.21]{HaSi08}, compares the $\sigma$-Galois group and the $\SD$-Galois group of an equation $\sigma(Y) = AY,$ $A\in \GL_n(k)$ over $k$.

\begin{prop}\label{propdense}\hspace{-.12in} \begin{enumerate} \item If $R = k\{Z,\frac{1}{\det Z}\}_\d$ is a $\SD$-Picard-Vessiot ring for  $\sigma(Y) = AY,$ $A\in \GL_n(k)$, then $S= k[Z,\frac{1}{\det Z}]$ is a $\sigma$-Picard-Vessiot ring for this equation.

\item With the above identification, the $\SD$-Galois group of $\sigma(Y) = AY$ is a Zariski-dense subgroup of the $\sigma$-Galois group of this equation. \end{enumerate}
\end{prop}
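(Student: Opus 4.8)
The plan is to prove part (1) first---that the subring $S=k[Z,\frac1{\det Z}]$ is a $\sigma$-Picard--Vessiot ring---and then to deduce the density statement (2) from (1) together with the two Galois correspondences. Since $S$ is by construction generated over $k$ by the entries of the fundamental solution matrix $Z$ and by $\frac1{\det Z}$, and $\sigma(Z)=AZ$, the only content of (1) is to show that $S$ is $\sigma$-simple, i.e.\ that it has no proper nonzero $\sigma$-ideal.

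For the $\sigma$-simplicity, the first move I would make is to transport a putative $\sigma$-ideal of $S$ up into $R$. If $I\subseteq S$ is a $\sigma$-ideal, then, because $\sigma$ and $\d$ commute, the $\d$-ideal $[I]$ that $I$ generates in $R$ is automatically $\sigma$-stable, hence a $\SD$-ideal of $R$; the $\SD$-simplicity of $R$ then forces $[I]=0$ or $[I]=R$. The naive hope is that $[I]=R$ already forces $I=S$, but this is exactly the delicate point: $\d$ does not preserve $S$ (one has $\d Z=(\d Z\, Z^{-1})Z$ with $\d Z\, Z^{-1}$ having entries outside $S$), so one cannot simply contract $[I]$ back to $I$. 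I expect this descent to be the main obstacle. The cleanest route I see is to argue by contradiction using the uniqueness of the $\SD$-Picard--Vessiot ring: given a proper nonzero $\sigma$-ideal, enlarge it to a maximal $\sigma$-ideal $\mathfrak m$, so that $T:=S/\mathfrak m$ is a genuine $\sigma$-Picard--Vessiot ring for $\sigma(Y)=AY$ over $k$. One then extends $\d$ from $k$ to a $\sigma$-extension of $T$ in which a full set of derivatives of the image of $Z$ can be adjoined without introducing new $\sigma$-constants---this is where differential closedness of $C=k^\sigma$ enters---thereby producing a $\SD$-Picard--Vessiot ring $R'$ that contains $T$ as the subring generated by a fundamental solution. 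By the uniqueness of the $\SD$-Picard--Vessiot ring there is a $\SD$-$k$-isomorphism $R\xrightarrow{\ \sim\ }R'$; since two fundamental solutions differ by right multiplication by a matrix in $\GL_n(C)\subseteq\GL_n(k)$, this isomorphism carries $S$ isomorphically onto $T$. Composing with the quotient map $S\twoheadrightarrow T$ then forces $\mathfrak m=0$, the desired contradiction.

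For part (2), I would use (1)---which makes $S$ a $\sigma$-Picard--Vessiot ring with $\sigma$-Galois group $G_\sigma$---and first set up the comparison homomorphism. Every $\gamma$ in the $\SD$-Galois group $G_{\SD}$ sends $Z\mapsto ZM_\gamma$ with $M_\gamma\in\GL_n(C)\subseteq\GL_n(k)$, so $\gamma$ preserves $S$ and restricts to a $\sigma$-$k$-automorphism of $S$; this gives a homomorphism $\rho\colon G_{\SD}\to G_\sigma$. Because $\gamma$ commutes with $\d$ and $R$ is $\d$-generated by the entries of $Z$, the automorphism $\gamma$ is completely determined by $M_\gamma$, and $M_\gamma$ is recovered from $\gamma|_S$; hence $\rho$ is injective and realizes $G_{\SD}$ as a (differential algebraic) subgroup of the linear algebraic group $G_\sigma$ inside $\GL_n(C)$. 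It remains to show this subgroup is Zariski dense. Writing $H$ for the Zariski closure of $G_{\SD}$ in $G_\sigma$, I would compare fixed rings: since the condition $\gamma(a)=a$ is Zariski closed in $\gamma$, the fixed ring of $H$ in the total quotient ring of $S$ equals that of $G_{\SD}$. Any element of the total quotient ring of $S$ fixed by all of $G_{\SD}$ lies, via the embedding of total quotient rings provided by \cite[Lem.~19 and Cor.~20]{FSW10}, in the total quotient ring $K$ of $R$, where it is fixed by the whole $\SD$-Galois group; by the $\SD$-Galois correspondence $K^{G_{\SD}}=k$, so this element lies in $k$. Thus the fixed ring of $H$ is exactly $k$, and the $\sigma$-Galois correspondence of \cite{PuSi}---under which the unique closed subgroup with fixed ring $k$ is all of $G_\sigma$---yields $H=G_\sigma$, which is the asserted density.
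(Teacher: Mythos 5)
The paper does not actually prove Proposition~\ref{propdense}: it is quoted from \cite[Prop.~6.21]{HaSi08}, so the comparison here is with the argument given there (and with the closely analogous argument this paper runs in the proof of Lemma~\ref{lemcyclic}(2)). Your part~(2) is correct and is essentially the standard proof: $\gamma\mapsto\gamma|_S$ is injective, ``$\gamma(a)=a$'' is a Zariski-closed condition on $\gamma$, the fixed ring of the $\sigma\delta$-Galois group in the total quotient ring of $S$ is $k$ by the $\sigma\delta$-Galois correspondence, and the $\sigma$-Galois correspondence then forces the Zariski closure to be the whole $\sigma$-Galois group. Of course this presupposes part~(1), since otherwise $S$ has no $\sigma$-Galois correspondence to invoke.

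The gap is in part~(1), precisely at the sentence where you ``extend $\delta$\dots thereby producing a $\sigma\delta$-Picard--Vessiot ring $R'$ that contains $T$ as the subring generated by a fundamental solution.'' This is not a consequence of the existence theorem for $\sigma\delta$-Picard--Vessiot rings: that theorem builds $R'$ as $k\{Y,\frac{1}{\det Y}\}_\delta/\mathfrak{n}$ for a maximal $\sigma\delta$-ideal $\mathfrak{n}$, and the subring generated by the fundamental matrix is then $k[Y,\frac{1}{\det Y}]/(\mathfrak{n}\cap k[Y,\frac{1}{\det Y}])$, where $\mathfrak{n}\cap k[Y,\frac{1}{\det Y}]$ is a $\sigma$-ideal with no evident reason to be maximal --- asking that it be maximal is exactly the statement you are trying to prove. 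Nor can one simply put $\delta$ on $T=k[\bar Z,\frac{1}{\det \bar Z}]$ with values in $T[W]$ for free indeterminates $W=\delta\bar Z$: the derivation must annihilate the relation ideal of $\bar Z$ over $k$, which imposes nontrivial constraints on $W$. So the construction of $R'$ ``over'' the prescribed $T$ is the entire difficulty, not a preliminary. (Your closing step also needs the remark that a surjective endomorphism of the Noetherian ring $S$ is injective, but that is minor.) The argument in \cite{HaSi08} instead goes through constants of total quotient rings: $R$ is $\sigma\delta$-simple, hence reduced, and its total quotient ring $K$ satisfies $K^\sigma=C$; by \cite[Lem.~19 and Cor.~20]{FSW10} the total quotient ring $E$ of $S$ embeds $\sigma$-equivariantly into $K$, so $E$ is reduced, has every non-zero-divisor invertible, is generated over $k$ by a fundamental matrix, and satisfies $E^\sigma=C$; by \cite[Prop.~1.23]{PuSi} such an $E$ is the total $\sigma$-Picard--Vessiot ring, i.e., $S$ is a $\sigma$-Picard--Vessiot ring. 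I would replace your construction of $R'$ with that argument; the rest of your write-up then stands.
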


\begin{defn}\label{intdef}
Given $A,\tilde{A}\in\GL_n(k)$, we say that the systems $\sigma(Y)=AY$ and $\sigma(Y)=\tilde{A}Y$ are {\em equivalent} if there exists $T\in\GL_n(k)$ such that $\tilde{A}=\sigma(T)AT^{-1}$.

In this case, if $Z$ is a fundamental solution matrix for the first system then $\tilde{Z}:=TZ$ is a fundamental solution matrix for the second system, and therefore they have the same Picard-Vessiot rings and Galois groups.\end{defn}

\begin{defn} The system $\sigma(Y)=AY$ with $A\in\mathrm{GL}_n(k)$ is \emph{integrable} over $k$ if one of the following equivalent conditions is satisfied:
\begin{enumerate}
\item The $\sigma\delta$-Galois group $G$ for $\sigma(Y)=AY$ over $k$ is $\delta$-constant, i.e., $G$ is conjugate to a subgroup of $\GL_n(C_0)$.
\item There exists $B\in\mathfrak{gl}_n(k)$ such that \begin{equation} \label{iso} \sigma(B)=ABA^{-1}+\delta(A)A^{-1}.\end{equation} \end{enumerate}\end{defn}

The equivalence of these two conditions is proved in \cite[Prop.~2.9]{HaSi08}\footnote{There is an error in the proof of this proposition that is fixed in the Erratum of \cite{HaSi08}.}. Note that if $\sigma(Y) = AY$ is integrable then after a change of fundamental solution matrix we may assume that its $\sd$-Galois group $G$ is in $\GL_n(C_0)$. Since the derivation is trivial on $C_0$, we have that $G$ is actually a linear algebraic group defined over $C_0$.  Its Zariski closure in $GL_n(C)$ must therefore be $G(C)$.  This and Proposition~\ref{propdense} imply the following result.

\begin{prop}\label{intprop} If the linear difference equation $\sigma(Y) = AY$ with $A\in \GL_n(k)$ is integrable, then its $\sigma$-Galois group is $G(C) \subset \GL_n(C)$ if and only if its $\SD$-Galois group is $G(C_0)$.\end{prop}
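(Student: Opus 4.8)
The plan is to derive the proposition almost directly from Proposition~\ref{propdense}(2) together with the observation, recorded just above, that integrability lets us place the $\SD$-Galois group inside $\GL_n(C_0)$. The one genuinely geometric input is a base-change fact: since $C$ is differentially closed, its field of constants $C_0=C^\d$ is algebraically closed, so for any linear algebraic group $G$ defined over $C_0$ the set of $C_0$-points $G(C_0)$ is Zariski-dense in $G(C)$, and conversely $G(C)$ is exactly the Zariski closure of $G(C_0)$ inside $\GL_n(C)$. I will take $G$ to be defined over $C_0$ throughout (this is the case relevant to the statement), and I will write $\overline{S}$ for the Zariski closure of a set $S\subseteq\GL_n(C)$.

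First I would restate Proposition~\ref{propdense}(2) in the form I need: the $\sigma$-Galois group of $\sigma(Y)=AY$ is the Zariski closure in $\GL_n(C)$ of its $\SD$-Galois group. Next, using integrability, I choose the fundamental solution matrix so that the $\SD$-Galois group $H_\d$ lies in $\GL_n(C_0)$; because $\d$ acts trivially on $C_0$, every defining differential-polynomial condition collapses to an ordinary polynomial condition, so $H_\d$ is Zariski-closed in $\GL_n(C_0)$ and hence equals $\mathbf{H}(C_0)$ for a genuine linear algebraic group $\mathbf{H}$ over $C_0$, with $\overline{H_\d}=\mathbf{H}(C)$. I would note that changing the fundamental matrix conjugates the $\sigma$-group and the $\SD$-group by one and the same element of $\GL_n(C)$, so the density relation and the shape of the conclusion are unaffected by this normalization.

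For the ($\Leftarrow$) direction, assume the $\SD$-Galois group equals $G(C_0)$. Then by the base-change fact its Zariski closure is $G(C)$, and by Proposition~\ref{propdense}(2) this closure is the $\sigma$-Galois group, giving that the $\sigma$-group equals $G(C)$. For the ($\Rightarrow$) direction, assume the $\sigma$-Galois group equals $G(C)$. By the previous paragraph the $\SD$-group is $\mathbf{H}(C_0)$ with $\overline{\mathbf{H}(C_0)}=\mathbf{H}(C)$, and Proposition~\ref{propdense}(2) identifies $\mathbf{H}(C)$ with the $\sigma$-group $G(C)$. Thus $\mathbf{H}(C)=G(C)$ as closed subgroups of $\GL_n(C)$, and I would conclude $\mathbf{H}=G$ as algebraic groups over $C_0$: since $C_0\subseteq C$ is faithfully flat and, in characteristic zero, $\mathbf{H}$ and $G$ are reduced, the radical ideals defining them over $C_0$ base-change to the radical ideals of the equal $C$-varieties $\mathbf{H}(C)=G(C)$, and faithful flatness then forces the two ideals over $C_0$ to agree. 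Taking $C_0$-points yields the $\SD$-group $=\mathbf{H}(C_0)=G(C_0)$.

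The step I expect to be the main obstacle—really the only nonformal point—is this last passage from equality of $C$-points to equality of the groups over $C_0$ in the ($\Rightarrow$) direction; everything else is a direct assembly of Proposition~\ref{propdense}(2) with the density of $C_0$-points in $C$-points. I would take care to justify that $H_\d$ is genuinely a linear algebraic group defined over $C_0$, so that the base-change fact applies to it, which is exactly the place where the triviality of $\d$ on $C_0$ and the algebraic closedness of $C_0$ are used.
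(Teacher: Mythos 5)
Your proposal is correct and follows essentially the same route as the paper, which derives the proposition from Proposition~\ref{propdense}(2) together with the observation that integrability places the $\SD$-Galois group in $\GL_n(C_0)$, where triviality of $\d$ makes it a linear algebraic group over the algebraically closed field $C_0$ whose Zariski closure is its group of $C$-points. The only point where you work harder than necessary is the final descent step in the ($\Rightarrow$) direction: since $\mathbf{H}$ and $G$ are both defined over $C_0$ and $\mathbf{H}(C)=G(C)$, one can simply intersect with $\GL_n(C_0)$ to get $\mathbf{H}(C_0)=G(C_0)$, with no need for faithful flatness.
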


In this section we will completely characterize the Galois groups of integrable linear difference equations in the following three cases of triples $(k,\sigma,\delta)$ as above, corresponding to the three cases of $\sigma$-fields discussed in the previous section. However, in this section we will always assume that $C$ is a differentially closed field of characteristic zero, instead of just algebraically closed. It is known that $C_0=C^\delta$ is algebraically closed \cite[\S9.1]{CaSi}.\vspace{.1in}

\begin{itemize}
\item[{\rm case S:}] The field $k:=C(x)$, $\sigma$ is the shift operator defined by $\sigma(x)=x+1$, and the derivation is defined by $\delta(x)=x$.
\item[{\rm case Q:}] The field $k:=C(x)$, $\sigma$ is the $q$-dilation operator defined by $\sigma(x)=qx$ for some $q\in C$, $q\neq0$ and not a root of unity, and the derivation is defined by $\delta(x)=x$.
\item[{\rm case M:}] The field $k:=C(\mathrm{log}(x),\{x^{1/\ell}\ |\ \ell\in\mathbb{N}\})$, $\sigma$ is the Mahler operator defined by $\sigma(x^\alpha)=x^{q\alpha}$ and $\sigma(\mathrm{log}(x))=q\mathrm{log}(x)$ for some integer $q\geq2$, and the derivation is defined by $\delta(x^\alpha)=\alpha\,  \mathrm{log}(x)\,x^\alpha$ and $\delta(\mathrm{log}(x))=\mathrm{log}(x)$.
\end{itemize}

\vspace{.1in}

Our point of departure is the following result, which is contained in Theorem 1 and Corollary 7 of \cite{SS16}\footnote{N.B.: our choice of notation for $A$ and $B$ here is the opposite from that of\cite{SS16}.}:

\begin{thm}(Sch\"afke-Singer, \cite{SS16}) \label{ss-thm-verbatim} Let $A\in\mathrm{GL}_n(C_0(x))$ and $B\in\mathfrak{gl}_n(C_0(x))$, and suppose that the system \begin{equation} \label{ss-eq}  \begin{cases} \sigma(Y)=AY \\ \partial(Y)=BY\end{cases}\end{equation} satisfies the consistency condition \begin{equation} \label{ss-int} \mu\sigma(B)A=AB+\partial(A),\end{equation} where $\partial$ and $\mu$ are defined as follows: in case S, $\mu=1$ and $\partial=\delta$; in case Q, $\mu=1$ and $\partial=\delta$; and in case M, $\mu=q$ and $\partial=x \ \frac{d}{dx}=(\mathrm{log}(x))^{-1}\delta$. Then, in each of these cases, the system $\sigma(Y)=AY$ is equivalent over $k$ to a system $\sigma(Y)=\tilde{A}Y$, where $\tilde{A}\in\mathrm{GL}_n(C_0)$.
\end{thm}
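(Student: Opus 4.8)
The plan is to manufacture an explicit gauge transformation out of a fundamental solution matrix of the auxiliary differential system, reduce $A$ to a constant matrix over a differential extension of $k$, and then descend this reduction to $k$ itself.

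First I would fix a fundamental solution matrix $U$ of $\partial(Y)=BY$ inside a Picard--Vessiot extension $L$ of $C_0(x)$ for this differential equation; since the field of $\partial$-constants of $C_0(x)$ is the algebraically closed field $C_0$, I may take the constants of $L$ to be $C_0$. The engine of the argument is the skew-commutation relation $\partial\sigma=\mu\sigma\partial$, which holds in each of cases S, Q, M for the stated value of $\mu$ coming from the definitions of $\partial$ and $\mu$ (this is exactly why $\mu$ appears in \eqref{ss-int}). Writing $V:=\sigma(U)$ and using this relation,
\[\partial(V)=(\partial\sigma)(U)=\mu\,\sigma(\partial(U))=\mu\,\sigma(B)\,\sigma(U)=\mu\,\sigma(B)\,V.\]
Substituting \eqref{ss-int} in the form $\mu\sigma(B)=ABA^{-1}+\partial(A)A^{-1}$ and setting $W:=A^{-1}V$, a short computation using $\partial(A^{-1})=-A^{-1}\partial(A)A^{-1}$ gives $\partial(W)=BW$.

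Thus $W=A^{-1}\sigma(U)$ and $U$ are two fundamental solution matrices of the same system $\partial(Y)=BY$, so they differ on the right by a constant invertible matrix: $\sigma(U)=AUN$ for some $N\in\GL_n(C_0)$. Taking $T:=U^{-1}$ then yields
\[\sigma(T)\,A\,T^{-1}=\sigma(U)^{-1}AU=N^{-1}U^{-1}A^{-1}AU=N^{-1}\in\GL_n(C_0),\]
so the gauge transformation $T=U^{-1}$ turns $\sigma(Y)=AY$ into the constant system $\sigma(Y)=N^{-1}Y$. This already establishes the equivalence over $L$, and with $\tilde A=N^{-1}$ it is exactly the asserted normal form --- except that $T$ has entries in $L$ rather than in $k$.

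The main obstacle, and the technical heart of \cite{SS16}, is therefore the descent of this equivalence to $k$. The relation $\sigma(U)=AUN$ shows that the twisted operator $A^{-1}\sigma$ preserves the finite-dimensional $C_0$-space of solutions of $\partial(Y)=BY$ (acting $C_0$-linearly via the matrix $N$), and hence every such solution satisfies, in addition to the differential equation, a linear difference equation in the operator $A^{-1}\sigma$ of order at most $n$ with coefficients in $C_0$. This places the entries of $U$ squarely within the B\'ezivin--Ramis rationality phenomenon: a function satisfying both a Mahler (resp.\ $q$-difference, resp.\ shift) equation and a linear differential equation is forced to be rational, up to the mild ramification of $k$ in case M. Carrying out this rationality analysis in each case --- with divisor and partial-fraction bookkeeping for the shift and $q$-dilation operators, and using the explicit ramified structure of $k$ for the Mahler operator --- allows one to replace $U$ by a representative for which the relevant combinations are rational, producing an honest $T\in\GL_n(k)$ with $\sigma(T)AT^{-1}=\tilde A\in\GL_n(C_0)$. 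I expect this descent to be the crux; by contrast, the preliminary reduction above is routine once the relation $\partial\sigma=\mu\sigma\partial$ is in hand.
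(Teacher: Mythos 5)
The paper itself does not prove this statement: it is imported verbatim from \cite{SS16} (Theorem 1 and Corollary 7) and used as a black box, so there is no internal proof to compare against. Judged on its own terms, your proposal has a genuine gap. The first reduction is essentially correct and is indeed how one begins: using $\partial\sigma=\mu\sigma\partial$ and the consistency condition, $A^{-1}\sigma(U)$ is again a fundamental solution of $\partial(Y)=BY$, whence $\sigma(U)=AUN$ with $N$ constant and $U^{-1}$ gauges $A$ to a constant matrix over $L$. (Even here you should justify two points: $\sigma$ is only given on $k$ and must first be extended to the differential Picard--Vessiot extension $L$ compatibly with $\partial$, and you must identify the $\partial$-constants of $L$ with $C_0$.)

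All of the actual content of the theorem, however, lies in the descent from $L$ to $k$, and your treatment of that step is a description of what needs to be done rather than a proof. Appealing to the B\'ezivin--Ramis rationality phenomenon is problematic for three reasons. First, the logical direction in \cite{SS16} (and in this paper's introduction) is the reverse: this theorem is used to \emph{reprove and generalize} B\'ezivin's and Ramis's results, so leaning on them requires at minimum a check that no circularity arises and that their hypotheses apply. Second, those results concern a single power series or analytic function satisfying both equations, whereas the entries of $U$ live in an abstract Picard--Vessiot ring over an arbitrary algebraically closed $C_0$; there is no canonical realization of them as series at a $\sigma$-fixed point, and for the $q$-dilation and Mahler operators the only candidate points ($0$, $1$, $\infty$) are exactly where $B$ may be singular --- handling those singularities is the local analysis that occupies most of \cite{SS16}, and the shift case needs a separate divisor/lattice argument since $\sigma$ has no finite fixed point. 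Third, what must be produced is a rational gauge $T$ with $\sigma(T)AT^{-1}$ constant, not a rational $U$: the phrase ``replace $U$ by a representative for which the relevant combinations are rational'' is precisely the assertion to be proved. In short, the routine half of the argument is right and the hard half is missing.
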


In the following result we show that the hypotheses of Theorem~\ref{ss-thm-verbatim} can be relaxed somewhat, and related to the integrability of the difference system.

\begin{prop} \label{SS-thm} Suppose we are in case S, case Q, or case M. If the system $\sigma(Y)=AY$ with $A\in\mathrm{GL}_n(C_0(x))$ is integrable over $k$, then it is equivalent over $k$ to a system $\sigma(Y)=\tilde{A}Y$ with $\tilde{A}\in\GL_n(C_0)$. \end{prop}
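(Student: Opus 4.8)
The plan is to deduce the statement from the Schäfke--Singer theorem (Theorem~\ref{ss-thm-verbatim}): it suffices to produce a matrix $\hat{B}\in\mathfrak{gl}_n(C_0(x))$ satisfying the consistency condition \eqref{ss-int}, since that theorem then immediately yields the desired equivalence over $k$ to a constant system. Integrability furnishes, by condition~\eqref{iso}, a matrix $B\in\mathfrak{gl}_n(k)$ with $\sigma(B)=ABA^{-1}+\delta(A)A^{-1}$, equivalently $\sigma(B)A=AB+\delta(A)$. Thus the entire content of the proposition is a \emph{descent}: the integrability matrix $B$ lives in the large field $k$ (and carries the wrong normalization in case M), whereas Theorem~\ref{ss-thm-verbatim} demands a solution with entries in $C_0(x)$.

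First I would reconcile the two consistency conditions. In cases S and Q we have $\mu=1$ and $\partial=\delta$, so the integrability identity $\sigma(B)A=AB+\delta(A)$ is already \emph{verbatim} condition \eqref{ss-int}; here $k=C(x)$, so $B$ is rational in $x$ with coefficients in $C$, and only the coefficient field must be shrunk from $C$ to $C_0$. In case M, where $\delta=\log(x)\,\partial$ on $C_0(x)$ and $\sigma(\log x)=q\log x$, I would set $\hat{B}:=(\log x)^{-1}B$. Since $\sigma(\hat{B})=q^{-1}(\log x)^{-1}\sigma(B)$, substituting $\sigma(B)=q\log(x)\,\sigma(\hat B)$ and $\delta(A)=\log(x)\partial(A)$ and dividing by $\log x$ converts the integrability identity into $q\,\sigma(\hat{B})A=A\hat{B}+\partial(A)$, which is exactly \eqref{ss-int} with $\mu=q$. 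In every case I am thus reduced to descending a solution of \eqref{ss-int} from the ambient field down to $C_0(x)$.

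The descent of the coefficient field from $C$ to $C_0$ is the routine part. Vectorizing, \eqref{ss-int} becomes an inhomogeneous first-order system $\sigma(\mathbf{b})=\tilde{M}\mathbf{b}+\tilde{\mathbf{e}}$ with $\tilde{M}\in\GL_{n^2}(C_0(x))$ and $\tilde{\mathbf{e}}\in C_0(x)^{n^2}$; note $q\in C_0$ (in case Q this is forced by $\sigma\delta=\delta\sigma$, and in cases S, M it is $1$ or an integer), so $\sigma$ is defined over $C_0$. Given a solution rational in $x$ over $C$, its finitely many coefficients generate a finitely generated $C_0$-subalgebra $R\subseteq C$, and since $C_0$ is algebraically closed the Nullstellensatz provides a $C_0$-point $\phi\colon R\to C_0$ avoiding the loci where the relevant denominators degenerate. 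Applying $\phi$ coefficientwise fixes $\tilde{M},\tilde{\mathbf{e}}$ and commutes with $\sigma$, hence specializes the $C$-solution to a solution in $C_0(x)^{n^2}$.

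The genuine obstacle is case M, where the solution $\hat{B}$ coming from integrability lies in $k=C(\log x,\{x^{1/\ell}\})$ rather than in $C(x)$, and must be made rational in $x$ before the above specialization applies. I would remove $\log x$ and the fractional powers in turn. Writing $k=k_0(\log x)$ with $k_0=C(\{x^{1/\ell}\})$ and expanding $\hat{B}=\sum_{j}\hat{B}_j(\log x)^j$ as a Laurent series in $\log x$ over $k_0$ — legitimate since $\log x$ is transcendental over $k_0$, while $\sigma$ extends to $k_0((\log x))$ fixing the expansion point $\log x=0$ and scaling $(\log x)^j$ by $q^j$ — comparison of the constant terms in \eqref{ss-int} shows that $\hat{B}_0\in\mathfrak{gl}_n(k_0)$ \emph{already} satisfies $q\,\sigma(\hat{B}_0)A=A\hat{B}_0+\partial(A)$. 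It then remains to pass from $\hat{B}_0\in\mathfrak{gl}_n(C(x^{1/\ell_0}))$ to a solution over $C(x)$, which I expect to be the delicate point. Decomposing along the $C(x)$-basis $1,x^{1/\ell_0},\dots,x^{(\ell_0-1)/\ell_0}$ and using that the Mahler operator sends the graded piece of exponent $j$ to that of $qj\bmod\ell_0$ — thereby strictly lowering the $q$-part of the ramification — one shows that the genuinely $q$-ramified components of any solution are forced to vanish, reducing $\ell_0$ to an integer $\ell_0'$ coprime to $q$. For such $\ell_0'$ the substitutions $x^{1/\ell_0'}\mapsto\zeta x^{1/\ell_0'}$, $\zeta$ in the group $\mu_{\ell_0'}$ of $\ell_0'$-th roots of unity, are merely permuted by $\sigma$ (since $\zeta\mapsto\zeta^q$ is then a bijection of $\mu_{\ell_0'}$), so averaging $\hat{B}_0$ over $\mu_{\ell_0'}$ is $\sigma$-equivariant and produces a $\mu_{\ell_0'}$-invariant, hence unramified, solution in $\mathfrak{gl}_n(C(x))$. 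With $\hat{B}_0$ now rational in $x$, the specialization of the previous paragraph lands it in $\mathfrak{gl}_n(C_0(x))$, and Theorem~\ref{ss-thm-verbatim} completes the proof.
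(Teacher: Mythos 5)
Your proof is correct and follows essentially the same route as the paper's: reconcile \eqref{iso} with \eqref{ss-int} (multiplying by $(\log x)^{-1}$ in case M), extract the appropriate $\log$-graded component, reduce the ramification index to one coprime to $q$ and average over $\mu_{\ell}$, then descend from $C$ to $C_0$ using that $C_0$ is algebraically closed. The differences are cosmetic --- for instance, your Nullstellensatz specialization of a finitely generated coefficient algebra is the paper's observation that a variety defined over $C_0$ with a $C$-point has a $C_0$-point, and your ``$q$-ramified components vanish'' step is the paper's minimality argument for $\ell$.
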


\begin{proof} {We begin by observing that \eqref{iso} and \eqref{ss-int} are equivalent in cases S and Q, whereas in case M we have that $B\in\gl_n(k)$ satisfies \eqref{ss-int} if and only if $\mathrm{log}(x)B$ satisfies \eqref{iso}.} Let us first consider cases S and Q. To see that the existence of $B\in\gl_n(C(x))$ satisfying \eqref{ss-int} implies the existence of $B'\in \gl_n(C_0(x))$ satisfying the same equation, observe that \eqref{ss-int} defines a system of algebraic equations for the unknown coefficients of the matrix entries of $B$, considered as rational functions in $x$. The algebraic variety $V$ defined by this system of equations is defined over $C_0$, because $A\in \GL_n(C_0(x))$. Since $C_0$ is algebraically closed, if $V$ has a $C$-point, then it must also have a $C_0$-point, which yields $B'\in\gl_n(C_0(x))$ satisfying \eqref{ss-int}. {This concludes the proof in cases S and Q, by Theorem~\ref{ss-thm-verbatim}.}

In case M, we still have to show that the existence of $B\in\gl_n(k)$ satisfying \eqref{iso} implies the existence of $B'\in\gl_n(C(x))$ satisfying \eqref{ss-int}. To see this, first note that there is an integer $\ell\in\mathbb{N}$ such that $B\in\gl_n(C(x^{1/\ell},\mathrm{log}(x))$. Since $\mathrm{log}(x)$ is transcendental over $C(x^{1/\ell})$, we may write $B$ as a Laurent series $\sum_{i\geq N} B_i \mathrm{log}^i(x)$, where $B_i\in\mathfrak{gl}_n(C(x^{1/\ell}))$ and $\mathrm{log}^i(x):=(\mathrm{log}(x))^i$. Substituting this expression for $B$ in \eqref{iso}, we obtain \begin{align*} \sigma(\sum B_i\mathrm{log}^i(x))A &= A\sum B_i\mathrm{log}^i(x) + \delta(A) \\
\sum q^i\mathrm{log}^i(x)\sigma(B_i)A &= \sum \mathrm{log}^i(x)AB_i + \mathrm{log}(x)\partial(A), \intertext{whence the equality of $\mathrm{log}^i(x)$ terms for $i=1$:}
q\mathrm{log}(x)\sigma(B_1)A &= \mathrm{log}(x)(AB_1 +\partial(A)),
\end{align*}
which implies that $B_1\in\gl_n(C(x^{1/\ell}))$ satisfies \eqref{ss-int}.

{Suppose that $\ell\in\mathbb{N}$ is the smallest integer such that $B_1\in\gl_n(C(x^{1/\ell}))$, and let $d:=\ell/\mathrm{gcd}(q,\ell)$. Since $A\in\GL_n(C_0(x))$ and $\sigma(B_1)\in\gl_n(C(x^{1/d}))$, it follows that \[ B_1=qA^{-1}\sigma(B_1)A-A^{-1}\partial(A)\in\gl_n(C(x^{1/d})).\] Since $\ell$ is minimal and $d\leq\ell$, it follows that $d=\ell$, that is, $\ell$ and $q$ are relatively prime. Now let $\tau$ be a cyclic generator of  $\mathrm{Gal}(C(x^{1/\ell})/C(x))$, so that $\tau(x^{1/\ell})=\zeta_\ell x^{1/\ell}$, where $\zeta_\ell$ is a primitive root of unity. Then we see that $\tau\sigma=\sigma\tau^q$, and since the $q$-power map is an automorphism of the group $\mathrm{Gal}(C(x^{1/\ell})/C(x))\simeq \mu_\ell$ of $\ell$-th roots of unity, we see that the trace operator $\mathrm{Tr}:=\sum_{i=0}^{\ell-1}\tau^i:C(x^{1/\ell})\rightarrow C(x)$ commutes with $\sigma$. Applying $\mathrm{Tr}$ to both sides of \[q\sigma(B_1)A=AB_1+\partial(A),\] we find that $\tilde{B}_1:=\mathrm{Tr}(B_1)/\ell\in\gl_n(C(x))$ satisfies \eqref{ss-int}.} Arguing as in cases S and Q {we} conclude that there exists $B'\in\gl_n(C_0(x))$ satisfying \eqref{ss-int} in case M as well. \end{proof}

The main result of this section is the following.

\begin{thm} \label{thm4}  Consider a difference system of the form \begin{equation}\label{eq}\sigma(Y)=AY, \ \ \text{with} \ A\in\GL_n(C_0(x)).\end{equation} \begin{enumerate}

\item In case S, a {$\delta$-}constant group $G \subset \mathrm{GL}_n(C_0)$ is the $\sigma\delta$-Galois group of \eqref{eq} over $k$ if and only if it is isomorphic to a group of the form 
\[\mathbb{G}_m(C_0)^r  \times \mathbb{Z}/t \mathbb{Z} \]
where $r$ is a nonnegative integer and $t$ is a positive integer.

\item In case Q, a {$\delta$-}constant group $G \subset \mathrm{GL}_n(C_0)$ is the $\sigma\delta$-Galois group of \eqref{eq} over $k$ if and only if it is isomorphic to a group of the form 
\[\mathbb{G}_m(C_0)^r \times \mathbb{G}_a(C_0)^s \times \mathbb{Z}/t\mathbb{Z} \times \mathbb{Z}/p\mathbb{Z}\]
where $r$ and $s$ are nonnegative integers, $s \leq 1$, and $t$ and $p$ are positive integers.

\item In case M, a {$\delta$-}constant group $G \subset \mathrm{GL}_n(C_0)$ is the  $\sigma\delta$-Galois group  of \eqref{eq} over $k$ if and only if it is isomorphic to a group of the form 
\[\mathbb{G}_m(C_0)^r \times \mathbb{G}_a(C_0)^s \times \mathbb{Z}/t\mathbb{Z} \times \mathbb{Z}/p\mathbb{Z}\]
where $r$ and $s$ are nonnegative integers, $s \leq 1$, and $t$ and $p$ are positive integers.\end{enumerate}
\end{thm}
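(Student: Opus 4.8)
The plan is to deduce this classification of $\sigma\delta$-Galois groups of integrable systems from the already-established classification of $\sigma$-Galois groups of \emph{constant} systems in Proposition~\ref{sigmagp}. The two bridges are Proposition~\ref{SS-thm}, which lets me replace an integrable system by an equivalent constant one, and Proposition~\ref{intprop}, which translates between the $\sigma$-Galois group $G(C)$ and the $\sigma\delta$-Galois group $G(C_0)$ of an integrable system. The three cases S, Q, M will be handled uniformly, differing only in which part of Proposition~\ref{sigmagp} is invoked (case s for S, case q for Q, case m for M); in particular Q and M will produce the same shape because parts~(3) and~(4) of Proposition~\ref{sigmagp} do.

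For \textbf{necessity}, suppose $G(C_0)\subset\GL_n(C_0)$ is the $\sigma\delta$-Galois group of an integrable system \eqref{eq}. By Proposition~\ref{SS-thm} this system is equivalent over $k$ to a constant system $\sigma(Y)=\tilde{A}Y$ with $\tilde{A}\in\GL_n(C_0)$, and by Definition~\ref{intdef} equivalent systems share the same $\sigma$- and $\sigma\delta$-Galois groups, so I may work with the constant system. Since $\tilde{A}\in\GL_n(C_0)\subset\GL_n(C)$ and $C=k^\sigma$ is differentially closed, hence algebraically closed, Proposition~\ref{sigmagp} shows that the $\sigma$-Galois group $G(C)$ over $k$ is isomorphic to $\mathbb{G}_m(C)^r\times\mathbb{Z}/t\mathbb{Z}$ in case S, and to $\mathbb{G}_m(C)^r\times\mathbb{G}_a(C)^s\times\mathbb{Z}/t\mathbb{Z}\times\mathbb{Z}/p\mathbb{Z}$ in cases Q and M. Proposition~\ref{intprop} then identifies the $\sigma\delta$-Galois group as $G(C_0)$. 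Because $G$ is defined over $C_0$ and the torus rank $r$, the unipotent dimension $s$, and the component group are invariant under the base change $C_0\hookrightarrow C$ (all of $\mathbb{G}_m$, $\mathbb{G}_a$, $\mathbb{Z}/t\mathbb{Z}$, $\mathbb{Z}/p\mathbb{Z}$ being defined over $\mathbb{Q}$), the group $G(C_0)$ has exactly the asserted form with $C_0$ in place of $C$.

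For \textbf{sufficiency}, I would take the explicit realizing matrices produced in the proofs of Proposition~\ref{sigmagp}(2),(3),(4) and observe that they can be chosen in $\GL_n(C_0)$ rather than merely $\GL_n(C)$. The only entries not manifestly in $\overline{\mathbb{Q}}\subset C_0$ are the multiplicatively independent scalars $a_1,\dots,a_r$ and, in cases Q and M, the scalar $q^{1/p}$. The latter lies in $C_0$ because $q\in C_0$: in case M this is immediate since $q\in\mathbb{Z}$, and in case Q it follows by applying $\sigma\delta=\delta\sigma$ to $x$, which forces $\delta(q)=0$; then $q^{1/p}\in C_0$ since $C_0$ is algebraically closed. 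Once $A\in\GL_n(C_0)$, the system $\sigma(Y)=AY$ is integrable because \eqref{iso} holds with $B=0$ (as $\delta(A)=0$), its $\sigma$-Galois group over $k$ is the displayed $G(C)$ by Proposition~\ref{sigmagp}, and Proposition~\ref{intprop} delivers the $\sigma\delta$-Galois group $G(C_0)$ of the required form.

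The step I expect to be the main obstacle is the descent of the realizing matrices to $C_0$ in the sufficiency direction: verifying $q\in C_0$ and, above all, that $C_0$ contains enough multiplicatively independent scalars $a_1,\dots,a_r$ that are also multiplicatively independent from $q$. In case M one can simply take $a_1,\dots,a_r$ to be distinct primes not dividing $q$, so no issue arises; in case Q with $q$ transcendental the primes again work, and in general this relies on $C_0$ being a sufficiently large algebraically closed field, as is standard for the constants of a differentially closed field. A secondary point needing care, in the necessity direction, is the claim that the isomorphism type furnished by Proposition~\ref{sigmagp} over $C$ descends to the corresponding isomorphism type of $G(C_0)$; this is the routine fact that the structural invariants of a commutative linear algebraic group in characteristic zero are insensitive to algebraically closed base change, given that $G$ is defined over $C_0$.
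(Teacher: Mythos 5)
Your proposal is correct and takes essentially the same route as the paper's proof: Proposition~\ref{SS-thm} followed by Propositions~\ref{sigmagp} and~\ref{intprop} for necessity, and a constant realizing matrix (integrable via $B=0$ in \eqref{iso}) combined with Proposition~\ref{intprop} for sufficiency. The only divergence is in how the $\sigma$-Galois group over $k$ of the constant realizing system is identified: the paper applies Proposition~\ref{sigmagp} over $C_0(x)$ and then base-changes to $C(x)$ by citing \cite[Prop.~2.4 and Cor.~2.5]{CHS}, whereas you rerun the explicit construction with the multiplicatively independent scalars (and $q^{1/p}$) chosen inside $C_0$ --- a legitimate variant, and one that depends on exactly the same background fact (that $C_0$ is algebraically closed and contains enough elements multiplicatively independent from $q$) that the paper's invocation of Proposition~\ref{sigmagp} over $C_0(x)$ already requires.
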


\begin{proof} {(2)}~Assume that a {$\delta$-}constant group $G \subset \mathrm{GL}_n(C_0)$ is the $\sigma\delta$-Galois group of \eqref{eq}. Then $\sigma(Y) = AY$ is integrable with respect to the derivation $\delta = x\frac{d}{dx}$. Proposition~\ref{SS-thm} impiles that $\sigma(Y) = AY$ is equivalent to a system $\sigma(Y) = \tilde{A} Y$, with $\tilde{A} \in \mathrm{GL}_n(C_0) \subset \mathrm{GL}_n(C)$. Proposition~\ref{sigmagp} implies that the $\sigma$-Galois group of $\sigma(Y) = \tilde{A} Y$ is isomorphic to a group of the form $\mathbb{G}_m(C_0)^r \times \mathbb{G}_a(C_0)^s \times \mathbb{Z}/t\mathbb{Z} \times \mathbb{Z}/p\mathbb{Z}$, with $r,s,t,p$ as above.  Therefore Proposition~\ref{intprop} yields the conclusion.

Now assume that $G$ is isomorphic to a group of the described form. Proposition~\ref{sigmagp} implies that there is $A \in \mathrm{GL}_n(C_0)$ such {the $\sigma$-Galois group of $\sigma(Y) = AY $ over $C_0(x)$ is} $\mathbb{G}_m(C_0)^r \times \mathbb{G}_a(C_0)^s \times \mathbb{Z}/t\mathbb{Z} \times \mathbb{Z}/p\mathbb{Z}$. {By \cite[Prop.~2.4 and Cor.~2.5]{CHS},} this equation has $\mathbb{G}_m(C)^r \times \mathbb{G}_a(C)^s \times \mathbb{Z}/t\mathbb{Z} \times \mathbb{Z}/p\mathbb{Z}$ as its $\sigma$-Galois group over $k$. Since $\delta(A) = 0$, this system is integrable. {By} Proposition~\ref{intprop},  its $\sigma\delta$-Galois group is $\mathbb{G}_m(C_0)^r \times \mathbb{G}_a(C_0)^s \times \mathbb{Z}/t\mathbb{Z} \times \mathbb{Z}/p\mathbb{Z}$.

{(1)~and~(3)} The proofs of these statements are the same, {\it mutatis mutandis}, as that of {(2)}.\end{proof}

\section{Projectively integrable systems} \label{proj-sec} 
In this section we extend some of the conclusions of the previous section for integrable systems to the larger class of {\em projectively integrable} systems. We begin by proving some preliminary results for an arbitrary $\sigma\delta$-field $k$ with $C:=k^\sigma$ differentially closed and $C_0:=C^\delta$, before specializing to the cases S, Q, and M defined in the previous section. We write $\mathrm{Scal}_n(C)\subset\GL_n(C)$ for the group of invertible scalar matrices, and $I_n$ for the $n\times n$ identity matrix.

\begin{defn}\label{projintdef}
The system $\sigma(Y)=AY$ with $A\in\mathrm\GL_n(k)$ is \emph{projectively integrable} {over $k$} if one of the following equivalent conditions is satisfied:
\begin{enumerate}
\item The $\sigma\delta$-Galois group $G$ for $\sigma(Y)=AY$ over $k$ is projectively {$\delta$-}constant, i.e., {$G$ is} conjugate to a subgroup of $\mathrm{GL}_n(C_0)\cdot\mathrm{Scal}_n(C)=\mathrm{SL}_n(C_0)\cdot\mathrm{Scal}_n(C)$.
\item There exists $B\in\mathfrak{gl}_n(k)$ such that \begin{equation}\label{piso}\sigma(B)=ABA^{-1}+\delta(A)A^{-1}-\tfrac{1}{n}\delta(\mathrm{det}(A))\mathrm{det}(A)^{-1}\cdot I_n.\end{equation}
\end{enumerate} \end{defn}

The equivalence of these two conditions follows, {\it mutatis mutandis}, from the proof of {\cite[Prop.~2.10]{DHR15}}.

We now  show how certain  questions concerning projective integrability can be reduced to questions concerning integrability. To do this we show that, given a projectively integrable system, there is a {naturally} associated  system that is integrable. The following result makes precise the statement made in the introduction that a projectively integrable system becomes integrable after ``moding out by scalars."

\begin{prop} \label{piso-seq-prop} Suppose that $\sigma(Y)=AY$ with $A\in\GL_n(k)$ is projectively integrable, and let $H$ and $G$ denote its $\sigma$-Galois group and $\sigma\delta$-Galois group over $k$, respectively. Let 
\[\tilde{A}:= \mathrm{det}(A)^{-1}\otimes(A^{\otimes n}).\] Then:
\begin{enumerate}
\item The system ${\sigma}(Y) = \tilde{A}Y$ is integrable.
\item If $\tilde{H}$ and $\tilde{G}$ are the $\sigma$-Galois group and $\sigma\delta$-Galois group, respectively, of ${\sigma}(Y) = \tilde{A}Y$, we have short exact sequences
\begin{equation}\label{piso-seq}1\rightarrow \mathrm{Scal}_n(C)\cap H\rightarrow H \rightarrow \tilde{H}\rightarrow 1\qquad\text{and}\qquad 1\rightarrow \mathrm{Scal}_n(C)\cap G\rightarrow G \rightarrow \tilde{G}\rightarrow 1.\end{equation}
\end{enumerate}
 \end{prop}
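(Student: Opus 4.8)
The plan is to unwind the definitions of the construction $\tilde A := \det(A)^{-1}\otimes (A^{\otimes n})$ and relate the Picard–Vessiot rings of $\sigma(Y)=AY$ and $\sigma(Y)=\tilde A Y$ through the natural functoriality of the tensor and determinant constructions. First I would recall that if $Z\in\GL_n(K)$ is a fundamental solution matrix for $\sigma(Y)=AY$ inside the total $\SD$-Picard–Vessiot ring $K$, then $\det(Z)^{-1}\otimes Z^{\otimes n}$ is a fundamental solution matrix for $\sigma(Y)=\tilde A Y$, simply because $\sigma$ commutes with the tensor power and with the determinant. Hence the $\SD$-Picard–Vessiot ring of $\sigma(Y)=\tilde A Y$ is generated over $k$ by the entries of $\det(Z)^{-1}\otimes Z^{\otimes n}$ (and their derivatives), so it sits inside $K$ as a $\SD$-$k$-subalgebra, and likewise in the purely difference setting. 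This realizes $\tilde H$ and $\tilde G$ as quotients of $H$ and $G$ via restriction of automorphisms.

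For part (1), the cleanest route is to observe that the twisted determinant in the definition of $\tilde A$ is exactly engineered so that $\det(\tilde A)=1$: indeed $\det(\det(A)^{-1}\otimes A^{\otimes n})=\det(A)^{-n^{n-1}}\cdot(\det A)^{\dim}$ for the appropriate exponents, and after simplification the scalar-trace term cancels. More conceptually, the system $\sigma(Y)=\tilde A Y$ is precisely the ``$\mathrm{SL}_n$-part'' of the $n$-th tensor power, and the term $-\tfrac1n\delta(\det A)\det(A)^{-1}I_n$ appearing in \eqref{piso} is the obstruction to $\sigma(Y)=AY$ itself being integrable; twisting by $\det(A)^{-1}$ removes exactly this scalar obstruction. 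So I would take the matrix $B$ furnished by projective integrability via \eqref{piso}, produce from it a matrix $\tilde B\in\gl_n(k)$ satisfying the genuine integrability relation \eqref{iso} for $\tilde A$ (by applying the same tensor/determinant functoriality to the pair $(A,B)$), and conclude integrability of $\sigma(Y)=\tilde A Y$ by Definition~\ref{intdef}(2).

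For part (2), the two short exact sequences should be established in parallel, one purely difference-theoretic and one $\SD$-theoretic, by the same argument. The surjection $H\twoheadrightarrow\tilde H$ (resp.\ $G\twoheadrightarrow\tilde G$) is the restriction map described above, and its surjectivity follows because the Picard–Vessiot ring of $\tilde A$ is generated inside that of $A$, so any automorphism of the larger ring restricts to one of the smaller and every automorphism of the smaller extends. The content is to identify the kernel as $\mathrm{Scal}_n(C)\cap H$ (resp.\ $\mathrm{Scal}_n(C)\cap G$): an automorphism $\gamma$ fixes the entries of $\det(Z)^{-1}\otimes Z^{\otimes n}$ exactly when its action on $Z$ is by a matrix $[\gamma]$ satisfying $\det([\gamma])^{-1}\otimes[\gamma]^{\otimes n}=I$, and a standard representation-theoretic computation shows this holds if and only if $[\gamma]$ is scalar. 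The main obstacle I expect is precisely this kernel identification: showing that $\det(M)^{-1}\otimes M^{\otimes n}=I_{n^n}$ forces $M\in\mathrm{Scal}_n(C)$. I would handle it by noting that $M^{\otimes n}$ acting as a scalar multiple of the identity on the full tensor power forces $M$ to act as a scalar on $C^n$ (e.g.\ by comparing the action on decomposable tensors $v\otimes\cdots\otimes v$ and using that $M$ preserves every line, hence is scalar), and then checking the scalar is correctly normalized by the $\det(M)^{-1}$ factor.
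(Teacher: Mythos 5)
Your proposal is correct, and part (2) is essentially the paper's argument: the paper invokes the Tannakian formalism to identify the map $H\twoheadrightarrow\tilde H$ with $\rho(T)=\det(T)^{-1}\otimes(T^{\otimes n})$ and observes that $\rho(T)=I_{n^n}$ exactly when $T$ is scalar, which is the same kernel identification you carry out on decomposable tensors. Where you genuinely diverge is part (1). The paper proves integrability via condition (1) of Definition~\ref{intdef}: after conjugating so that $G\subseteq\GL_n(C_0)\cdot\Scal_n(C)$, it uses the relation \eqref{proj-const-rel}, namely $\delta(T)=\tfrac{\delta(\det T)}{n\det T}\,T$ for $T\in G$, to compute directly that $\delta\bigl(\det(T)^{-1}\otimes T^{\otimes n}\bigr)=0$, so $\tilde G=\rho(G)\subseteq\GL_{n^n}(C_0)$ is $\delta$-constant. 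You instead use condition (2): starting from $B$ satisfying \eqref{piso}, you transport it to $\tilde B:=\sum_{i=0}^{n-1}I^{\otimes i}\otimes B\otimes I^{\otimes(n-i-1)}\in\gl_{n^n}(k)$ (note: $\gl_{n^n}$, not $\gl_n$ as you wrote), and one checks that the scalar defect $-\tfrac1n\delta(\det A)\det(A)^{-1}I_n$ contributes $-\delta(\det A)\det(A)^{-1}\tilde A$ to $\sigma(\tilde B)\tilde A$, which is exactly cancelled by the term $\delta(\det(A)^{-1})A^{\otimes n}$ coming from $\delta(\tilde A)$; so $\tilde B$ satisfies \eqref{iso} for $\tilde A$. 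This computation does go through, and it buys you something the paper's route does not: you never need to conjugate $G$ into $\GL_n(C_0)\cdot\Scal_n(C)$ first, and the argument is purely equational over $k$, independent of the Galois-theoretic machinery. The paper's route is shorter on the page and stays at the level of group elements, consistent with its use of the group-theoretic characterizations elsewhere. Your aside that $\det(\tilde A)=1$ is true but is not by itself a proof of integrability; since you then supply the $\tilde B$ construction, this is only a presentational quibble.
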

 
 \begin{proof} (2)~Let $V$ denote the ${\sigma}$-module associated {with} $\sigma(Y)=AY$. Since the difference module associated {with} $\sigma(Y)=\tilde{A}Y$ is given by \[\tilde{V}:=(\bigwedge^nV)^\vee\otimes_kV^{\otimes n},\] the Tannakian formalism implies that the homomorphism $\rho:H\twoheadrightarrow\tilde{H}\subset\mathrm{GL}_{n^n}(C)$ corresponding to the action of $H$ on the solution space for $\tilde{V}$ is given by \[\rho(T)= \mathrm{det}(T)^{-1}\otimes (T^{\otimes n}).\] Note that $\rho(T)=I_{n^n}$ if and only if $T$ is a scalar matrix.

(1)~We remark that $G$ is projectively {$\delta$-}constant if and only if \begin{equation}\label{proj-const-rel}\delta(T)=\frac{\delta(\mathrm{det}(T))}{n\cdot\mathrm{det}(T)}\cdot T\end{equation} for every $T\in G$. In the following computation we adopt the convention that $T^{\otimes 0} =(1)$. Let us assume for the moment that $G\subseteq\mathrm{GL}_n(C_0)\cdot\mathrm{Scal}_n(C)$. {It follows from \eqref{proj-const-rel} that} \[\delta(T^{\otimes n})=\sum_{i=0}^{n-1} T^{\otimes i}\otimes\delta(T)\otimes T^{\otimes (n-i-1)}=\sum_{i=0}^{n-1}T^{\otimes i}\otimes(\tfrac{\delta(\mathrm{det}(T))}{n\cdot\mathrm{det}(T)}\cdot T)\otimes T^{\otimes (n-i-1)}=\tfrac{\delta(\mathrm{det}(T))}{\mathrm{det}(T)}\cdot T^{\otimes n}.\] {for each $T\in G$}. Therefore \begin{align*}\delta(\mathrm{det}(T)^{-1}\otimes T^{\otimes n}) &=(\delta(\mathrm{det}(T)^{-1}))\otimes T^{\otimes n} + \mathrm{det}(T)^{-1}\otimes\delta (T^{\otimes n}) \\ &=(-\tfrac{\delta(\mathrm{det}(T))}{\mathrm{det}(T)}\cdot\mathrm{det}(T)^{-1})\otimes T^{\otimes n}+\mathrm{det}(T)^{-1}\otimes (\tfrac{\delta(\mathrm{det}(T))}{\mathrm{det}(T)}\cdot T^{\otimes n}) \\ &=(-\tfrac{\delta(\mathrm{det}(T))}{\mathrm{det}(T)}+\tfrac{\delta(\mathrm{det}(T))}{\mathrm{det}(T)})\cdot(\mathrm{det}(T)^{-1}\otimes T^{\otimes n})=0,\end{align*} {so} $\tilde{G}=\rho(G)\subset\mathrm{GL}_{n^n}(C_0)$ {provided} that $G\subseteq\mathrm{GL}_n(C_0)\cdot\mathrm{Scal}_n(C)$. {In general, if $D\in\mathrm{GL}_n(C)$ is such that} $DGD^{-1}\subseteq\mathrm{GL}_n(C_0)\cdot\mathrm{Scal}_n(C)$ then we have just shown that $\rho(D)\tilde{G}\rho(D)^{-1}\subset \mathrm{GL}_{n^n}(C_0)$, and therefore $\tilde{G}$ is $\delta$-constant, as we wanted to show.\end{proof}

\begin{prop} \label{iso-piso} Let $G\subseteq\mathrm{GL}_n(C)$ denote the $\sigma\delta$-Galois group for the system \begin{equation}\label{iso-piso-sys}\sigma(Y)=AY, \quad \text{with} \ \ A\in\GL_n(k).\end{equation} The system is integrable if and only if it is projectively integrable and any of the following conditions is satisfied:
\begin{enumerate}
\item The system $\sigma(y)=\mathrm{det}(A)y$ is integrable.
\item $\det(G)\subseteq\mathbb{G}_m(C_0)$.
\item There exists $b\in k$ such that $\sigma(b)-b=\delta(\det(A))\mathrm{det}(A)^{-1}$.
\item $G\cap\mathrm{Scal}_n(C)\subseteq\mathrm{Scal}_n(C_0)$.
\end{enumerate} \end{prop}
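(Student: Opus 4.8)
The plan is to treat the three ``determinant'' conditions (1)--(3) as a unit, to establish the equivalence of integrability with the conjunction of projective integrability and~(3), and then to handle the scalar condition~(4) separately, which I expect to be the real difficulty. First the cheap observations: integrability trivially implies projective integrability, since a group conjugate into $\GL_n(C_0)$ is \emph{a fortiori} conjugate into $\GL_n(C_0)\cdot\Scal_n(C)$. Conditions (1) and (3) are literally the same statement, since applying Definition~\ref{intdef} to the rank-one system $\sigma(y)=\det(A)y$ says precisely that there is $b\in k$ with $\sigma(b)-b=\delta(\det(A))\det(A)^{-1}$. Condition (2) is equivalent to these because $\det(G)$ is the $\sigma\delta$-Galois group of $\sigma(y)=\det(A)y$ (it is the image of $G$ under the determinant character, i.e.\ the group attached to $\bigwedge^nV$), and for a one-dimensional group $\delta$-constancy coincides with being contained in $\Gm(C_0)$ because conjugation is trivial in the abelian group $\Gm$; by the definition of integrability this is again integrability of $\sigma(y)=\det(A)y$. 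Hence (1)$\Leftrightarrow$(2)$\Leftrightarrow$(3).

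Next I would tie integrability to (3). For the forward direction, take $B\in\gl_n(k)$ with $\sigma(B)=ABA^{-1}+\delta(A)A^{-1}$ as in \eqref{iso}; taking traces and using that $\sigma$ commutes with $\mathrm{tr}$, that $\mathrm{tr}(ABA^{-1})=\mathrm{tr}(B)$, and the Jacobi identity $\mathrm{tr}(\delta(A)A^{-1})=\delta(\det(A))\det(A)^{-1}$, one finds that $b:=\mathrm{tr}(B)$ satisfies (3). For the converse, assume projective integrability, so there is $B$ satisfying \eqref{piso}, and assume (3) with witness $b$; then I would verify that $B':=B+\tfrac1n\,b\,I_n$ satisfies \eqref{iso}. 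Indeed, the scalar term $\tfrac1n\sigma(b)I_n$ produced on the left combines, via $\sigma(b)-b=\delta(\det(A))\det(A)^{-1}$, to cancel the correction term $-\tfrac1n\delta(\det(A))\det(A)^{-1}I_n$ of \eqref{piso} against the $\tfrac1n\,b\,I_n$ produced on the right, leaving exactly \eqref{iso}. Together with the first paragraph this shows integrability is equivalent to projective integrability plus any of (1)--(3).

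Finally, condition (4). That (2) implies (4) is immediate: if $\lambda I_n\in G$ then $\lambda^n=\det(\lambda I_n)\in\det(G)\subseteq\Gm(C_0)$, and since $C_0$ is algebraically closed $\lambda\in C_0$. The substantive direction is that projective integrability together with (4) forces (2). The natural framework is the central short exact sequence $1\to\Scal_n(C)\cap G\to G\to\tilde G\to1$ of Proposition~\ref{piso-seq-prop}, in which $\tilde G$ is $\delta$-constant because $\sigma(Y)=\tilde AY$ is integrable, while (4) says exactly that the kernel is $\delta$-constant. To recover (3) I would introduce the element $w:=\delta(\det Z)/\det Z$ of the total $\sigma\delta$-Picard--Vessiot ring ($Z$ a fundamental solution matrix), which satisfies $\sigma(w)-w=\delta(\det(A))\det(A)^{-1}$ and transforms by $\gamma(w)=w+\delta(\det M_\gamma)/\det M_\gamma$, where $\gamma$ acts on $Z$ through $M_\gamma$. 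Since for $\gamma\in\Scal_n(C)\cap G$ one has $\det M_\gamma=\lambda^n$ with $\lambda\in C_0$ by (4), $w$ is invariant under $\Scal_n(C)\cap G$ and so descends to the Picard--Vessiot ring of the integrable system $\sigma(Y)=\tilde AY$; proving (3) then amounts to showing $w\in k$, i.e.\ that the induced additive character $\tilde G\to\Ga(C)$, $\gamma\mapsto\delta(\det M_\gamma)/\det M_\gamma$, vanishes.

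This vanishing is the step I expect to be the main obstacle. The naive hope---that a central extension of one $\delta$-constant group by another is again $\delta$-constant---is \emph{false} in general, so one cannot merely combine the $\delta$-constancy of $\Scal_n(C)\cap G$ and of $\tilde G$ to conclude that $G$ is $\delta$-constant. What must be exploited is that the character above is not arbitrary but is the logarithmic derivative of the \emph{algebraic} determinant character pushed through the $\delta$-constant quotient $\tilde G$: because $\tilde G$ is conjugate into $\GL(C_0)$, any morphism from it to $\Ga(C)$ takes $C_0$-values, which already confines $\det(G)$ to the subgroup $\{u\in\Gm(C):\delta(u)/u\in C_0\}$. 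Closing the remaining gap between this subgroup and $\Gm(C_0)$ is the delicate point, and here I would expect to have to use the Galois-theoretic origin of $G$---its realization as a genuine $\sigma\delta$-Galois group together with the integrability of the associated system $\sigma(Y)=\tilde AY$---rather than only its abstract group structure.
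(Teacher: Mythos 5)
Everything you prove is correct and essentially matches the paper: the chain (1)$\Leftrightarrow$(2)$\Leftrightarrow$(3), the implication (2)$\Rightarrow$(4), the easy direction that integrability gives projective integrability and the determinant conditions (your trace/Jacobi argument for integrable $\Rightarrow$ (3) is a pleasant small variant of the paper's group-theoretic one), and the computation showing that projective integrability plus (3) yields integrability via $B'=B+\tfrac{b}{n}I_n$ (identical to the paper's). But the proposition's real content is the remaining direction: that projective integrability together with condition (4) \emph{alone} --- the weakest-looking of the four, and the only one not already known to imply the others --- forces integrability. You explicitly stop short of proving this, and your sketched route has a flaw before it even reaches the step you flag as the obstacle: a homomorphism of differential algebraic groups from a $\delta$-constant group to $\mathbb{G}_a(C)$ need not take values in $C_0$; its image is merely $\delta$-constant, hence of the form $u\cdot\mathbb{G}_a(C_0)$ for some $u\in C^\times$ (e.g.\ conjugates of $\mathbb{G}_a(C_0)$ inside $\mathbb{G}_a(C)$), so even the confinement of $\det(G)$ to $\{u\in C^\times : \delta(\delta(u)/u)=0\}$ does not come for free from your stated reason, and in any case that group is strictly larger than $\mathbb{G}_m(C_0)$. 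So the proposal as written proves the proposition only for conditions (1)--(3).

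For comparison, the paper closes this gap by a different mechanism: it brings in the $\sigma$-Galois group $H$ alongside $G$. Using Proposition~\ref{propdense} and Cassidy's classification of $\delta$-subgroups of $\mathbb{G}_m(C)$, hypothesis (4) identifies $G\cap\mathrm{Scal}_n(C)$ with the $C_0$-points $D(C_0)$ of $D:=H\cap\mathrm{Scal}_n(C)$; integrability of the associated system $\sigma(Y)=\tilde{A}Y$ identifies the common quotient $\tilde{G}=\tilde{H}(C_0)$ via Proposition~\ref{intprop}. The two short exact sequences
\[
1\rightarrow D(C_0)\rightarrow H(C_0)\rightarrow \tilde{H}(C_0)\rightarrow 1
\qquad\text{and}\qquad
1\rightarrow D(C_0)\rightarrow G\rightarrow \tilde{H}(C_0)\rightarrow 1
\]
are then compared inside $\mathrm{GL}_n(C)$ to conclude $G=H(C_0)$, which is $\delta$-constant; a separate reduction via $\sigma^t$ handles the case where the Picard--Vessiot ring is not a domain. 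Some version of this comparison of $G$ with the $C_0$-points of the algebraic group $H$ (rather than an argument purely about the abstract extension structure of $G$) is exactly the ingredient your proposal is missing.
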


\begin{proof}
Since $\mathrm{det}(G)$ is the $\sigma\delta$-Galois group over $k$ for the system $\sigma(y)=\mathrm{det}(A)y$, it is clear that (1) $\Leftrightarrow$ (2) $\Leftrightarrow$ (3). Moreover, since the restriction of $\mathrm{det}$ to $G\cap\mathrm{Scal}_n(C)$ coincides with the $n$-power map on $\mathrm{Scal}_n(C)\simeq\mathbb{G}_m(C)$, we have that (2) $\Rightarrow$ (4).

If \eqref{iso-piso-sys} is integrable, then $G$ is conjugate to a subgroup of $\GL_n(C_0)\subset\GL_n(C_0)\cdot\mathrm{Scal}_n(C)$, so the system is also projectively integrable and $\mathrm{det}(G)\subseteq\mathbb{G}_m(C_0)$. Hence, the integrability of \eqref{iso-piso-sys} implies that condition (2) is satisfied, and therefore conditions (1), (3), and (4) are satisfied as well.

At this point it would be sufficent to show that if \eqref{iso-piso-sys} is projectively integrable and (4) holds then \eqref{iso-piso-sys} is integrable. However, let us first give a simple proof that if \eqref{iso-piso-sys} is projectively integrable and (3) holds then \eqref{iso-piso-sys} is integrable. So suppose that \eqref{iso-piso-sys} is projectively integrable and $b\in k$ satisfies $\sigma(b)-b=\delta(\mathrm{det}(A))\mathrm{det}(A)^{-1}.$ Suppose $B\in\mathfrak{gl}_n(k)$ satisfies \eqref{piso}, and let $B':=B+\frac{b}{n}\cdot I_n$. Then \begin{align*} \sigma(B') &= \sigma(B) + \tfrac{\sigma(b)}{n}\cdot I_n \\ &=ABA^{-1} +\delta(A)A^{-1} -\tfrac{1}{n}\delta(\mathrm{det}(A))\mathrm{det}(A)^{-1}\cdot I_n +\tfrac{\sigma(b)}{n}\cdot I_n \\ &=A(B+\tfrac{b}{n}\cdot I_n)A^{-1} + \delta(A)A^{-1}-\tfrac{1}{n}\delta(\mathrm{det}(A))\mathrm{det}(A)^{-1}\cdot I_n +\tfrac{\sigma(b)-b}{n}\cdot I_n \\ &= AB'A^{-1}+\delta(A)A^{-1}, \end{align*}
and therefore \eqref{iso-piso-sys} is integrable.

Let us now show that if \eqref{iso-piso-sys} is projectively integrable and (4) holds then \eqref{iso-piso-sys} is also integrable. Let us assume without loss of generality that $G\subset\mathrm{GL}_n(C_0)\cdot\mathrm{Scal}_n(C)$. Let $H$ and $\tilde{H}$ denote the $\sigma$-Galois groups for $\sigma(Y)=AY$ and $\sigma(Y)=\tilde{A}Y$, respectively, where $\tilde{A}:=\mathrm{det}(A)^{-1}\otimes(A^{\otimes n})$. Observe that $H\subset\mathrm{GL}_n(C)$ and $\tilde{H}\subset\mathrm{GL}_{n^n}(C)$ are both algebraic groups defined over $C_0$. By Proposition~\ref{piso-seq-prop}, the system $\sigma(Y)=\tilde{A}Y$ is integrable and its $\sigma\delta$-Galois group $\tilde{G}\subset\mathrm{GL}_{n^n}(C_0)$. By Proposition~\ref{intprop}, $\tilde{G}$ can be identified with the $C_0$-points of $\tilde{H}$, and we may write $\tilde{G}=\tilde{H}(C_0)$.

Let $R$ be the $\sigma\delta$-Picard-Vessiot ring for \eqref{iso-piso-sys} over $k$, and let $S\subset R$ be the $\sigma$-Picard-Vessiot ring, both generated by the same choice of fundamental solution matrix $Z$.

We will first make the supplementary assumption that $R$ is a domain. In this case, $L:=\mathrm{Frac}(S)$ and $K:=\mathrm{Frac}(R)$ are fields. If we let \[\tilde{R}:=k\bigl\{\mathrm{det}(Z)^{-1}\otimes (Z^{\otimes n})\bigr\}_\delta\] then $\tilde{R}\subset R$ is a $\sigma\delta$-Picard-Vessiot ring for $\sigma(Y)=\tilde{A}Y$ over $k$. Since this equation is integrable, $\tilde{R}$ is also a $\sigma$-Picard-Vessiot ring for the same equation over $k$. Moreover, since $\tilde{R}\subset S\subset R$, $\tilde{R}$ is also a domain. Let $\tilde{k}:=\mathrm{Frac}(\tilde{R})$. Then $\tilde{k}$ is the fixed field of $H\cap\mathrm{Scal}_n(C)$ in $L$, and therefore $H\cap\mathrm{Scal}_n(C)$ is the $\sigma$-Galois group for \eqref{iso-piso-sys} over $\tilde{k}$. Similarly, $\tilde{k}$ is the fixed field for $G\cap\mathrm{Scal}_n(C)$ in $K$, and therefore $G\cap\mathrm{Scal}_n(C)$ is the $\sigma\delta$-Galois group for \eqref{iso-piso-sys} over $\tilde{k}$. It follows from Proposition~\ref{propdense} that $G\cap\mathrm{Scal}_n(C)$ is Zariski-dense in $H\cap\mathrm{Scal}_n(C)$. If $G\cap\mathrm{Scal}_n(C)\subseteq\mathrm{Scal}_n(C_0)$ then $G\cap\mathrm{Scal}_n(C)$ coincides with the $C_0$ points of $H\cap\mathrm{Scal}_n(C)$, which latter group is defined over $C_0$. Let us write $D:=H\cap\mathrm{Scal}_n(C)$, so that $G\cap\mathrm{Scal}_n(C_0)=D(C_0)$. From the short exact sequence \[ 1\rightarrow H\cap\mathrm{Scal}_n(C) \rightarrow H\rightarrow \tilde{H}\rightarrow 1\] of Proposition~\ref{piso-seq-prop}(2), we obtain the short exact sequence of $C_0$-points \begin{equation}\label{tilde-h-seq}1\rightarrow D(C_0)\rightarrow H(C_0)\rightarrow \tilde{H}(C_0)\rightarrow 1.\end{equation} On the other hand, we have shown that the other short exact sequence \[ 1\rightarrow G\cap\mathrm{Scal}_n(C_0)\rightarrow G \rightarrow \tilde{G}\rightarrow 1\] of Proposition~\ref{piso-seq-prop}(2) may be rewritten as \begin{equation}\label{tilde-g-seq}1\rightarrow D(C_0) \rightarrow G \rightarrow \tilde{H}(C_0)\rightarrow 1.\end{equation} It follows from \eqref{tilde-h-seq} and \eqref{tilde-g-seq} that the images of $H(C_0)\subset\mathrm{GL}_n(C)$ and $G\subset\mathrm{GL}_n(C)$ in $\mathrm{GL}_n(C)/D(C_0)$ are equal, and therefore $G=\tilde{H}(C_0)$. In particular, $G$ is $\delta$-constant.

In general, when $R$ is not assumed to be a domain, we proceed as follows. By \cite[Cor.~1.16]{PuSi} and \cite[Lem.~6.8]{HaSi08}, there exists a positive integer $t$ such that $R=R_0\oplus\dots\oplus R_{t-1}$ (resp., $S=S_0\oplus\dots\oplus S_{t-1}$), where each $R_i$ (resp., $S_i$) is a domain, and a $\sigma^t\delta$-Picard-Vessiot extension (resp., $\sigma^t$-Picard-Vessiot extension) for $\sigma^t(Y)=A_tY$ over $k$ considered as a $\sigma^t\delta$-field, where $A_t:=\sigma^{t-1}(A)\dots\sigma(A)A$. If we let $G_t$ and $H_t$ denote the $\sigma^t\delta$-Galois group and $\sigma^t$-Galois group, respectively, for $\sigma^t(Y)=A_tY$ over $k$, still considered as a $\sigma^t\delta$-field, we have short exact sequences \begin{equation} \label{ses}1\rightarrow H_t\rightarrow H\rightarrow\mathbb{Z}/t\mathbb{Z}\rightarrow 1 \qquad\text{and} \qquad1\rightarrow G_t\rightarrow G\rightarrow\mathbb{Z}/t\mathbb{Z}\rightarrow 1.\end{equation} Since $\sigma^t(Y)=A_tY$ is also projectively integrable and $G_t\cap\mathrm{Scal}_n(C)\subseteq\mathrm{Scal}_n(C_0)$, the argument in the previous paragraph shows that $G_t$ coincides with the $C_0$-points of $H_t$. It follows from \eqref{ses} that $G$ also coincides with the $C_0$-points of $H$, whence $G$ is $\delta$-constant in this case also.
\end{proof}

\begin{cor} \label{nonint-scal-cor}Suppose $\sigma(Y)=AY$ with $A\in\GL_n(k)$ is projectively integrable but not integrable, and let $H$ and $G$ denote its $\sigma$-Galois group and $\sigma\delta$-Galois group over $k$, respectively. Then $H\cap\mathrm{Scal}_n(C)=\mathrm{Scal}_n(C)$ and $G\cap\mathrm{Scal}_n(C)\supsetneqq\mathrm{Scal}_n(C_0)$.\end{cor}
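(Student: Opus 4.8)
The plan is to deduce both statements from the criterion in Proposition~\ref{iso-piso} together with the structure of (differential) subgroups of the multiplicative group. Since $\sigma(Y)=AY$ is projectively integrable but not integrable, Proposition~\ref{iso-piso} forces all of its conditions (1)--(4) to fail; in particular condition~(4) fails, so there is a scalar matrix $cI_n\in G\cap\mathrm{Scal}_n(C)$ with $c\notin C_0$, i.e. $\delta(c)\neq0$. The first thing I would record is that such a $c$ cannot be a root of unity: if $c^m=1$ then differentiating yields $mc^{m-1}\delta(c)=0$, whence $\delta(c)=0$, a contradiction. Thus $c$ has infinite multiplicative order, and $G\cap\mathrm{Scal}_n(C)$ is an \emph{infinite} subgroup of $\mathrm{Scal}_n(C)\cong\mathbb{G}_m(C)$.

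For the first assertion I would use that $G\subseteq H$ is Zariski-dense by Proposition~\ref{propdense}(2), so in particular $G\cap\mathrm{Scal}_n(C)\subseteq H\cap\mathrm{Scal}_n(C)$. Now $H\cap\mathrm{Scal}_n(C)$ is a Zariski-closed subgroup of $\mathrm{Scal}_n(C)\cong\mathbb{G}_m(C)$, and the only proper closed subgroups of $\mathbb{G}_m(C)$ are finite. Since $H\cap\mathrm{Scal}_n(C)$ contains the infinite group $G\cap\mathrm{Scal}_n(C)$, it is not finite, and therefore $H\cap\mathrm{Scal}_n(C)=\mathrm{Scal}_n(C)$.

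For the second assertion I would regard $G\cap\mathrm{Scal}_n(C)$ as a differential algebraic subgroup of $\mathrm{Scal}_n(C)\cong\mathbb{G}_m(C)$. Being infinite, it is Zariski-dense in $\mathbb{G}_m(C)$, and I would invoke the classification of the $\delta$-subgroups of $\mathbb{G}_m$ (see, e.g., \cite{CaSi}): every Zariski-dense such subgroup is either all of $\mathbb{G}_m(C)$ or of the form $\{y : L(\delta(y)/y)=0\}$ for some nonzero linear differential operator $L$ over $C$. In either case the subgroup contains $\mathbb{G}_m(C_0)$, since any $c_0\in C_0$ satisfies $\delta(c_0)/c_0=0$ and $L$ annihilates $0$. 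Transporting this back through the isomorphism $\mathrm{Scal}_n(C)\cong\mathbb{G}_m(C)$ gives $\mathrm{Scal}_n(C_0)\subseteq G\cap\mathrm{Scal}_n(C)$; and because the element $cI_n$ found above lies in $G\cap\mathrm{Scal}_n(C)$ but not in $\mathrm{Scal}_n(C_0)$, the inclusion is strict, i.e. $G\cap\mathrm{Scal}_n(C)\supsetneqq\mathrm{Scal}_n(C_0)$.

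The routine ingredients---the failure of~(4), the root-of-unity computation, and the fact that the proper closed subgroups of $\mathbb{G}_m$ are finite---are immediate. The one external input that I expect to be the real engine of the argument is Cassidy's classification of the differential algebraic subgroups of $\mathbb{G}_m$: it is precisely this that upgrades the raw failure of condition~(4) (which by itself only yields the non-inclusion $G\cap\mathrm{Scal}_n(C)\not\subseteq\mathrm{Scal}_n(C_0)$) into the full statement that a Zariski-dense $\delta$-subgroup of the scalars must already contain \emph{all} of $\mathrm{Scal}_n(C_0)$, and hence into the stated equality and strict inclusion.
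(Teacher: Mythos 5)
Your proposal is correct and follows essentially the same route as the paper: both arguments extract infiniteness of $G\cap\mathrm{Scal}_n(C)$ from the failure of condition (4) of Proposition~\ref{iso-piso} (your root-of-unity computation is just the contrapositive of the paper's ``finite implies $\delta$-constant'' step), deduce $H\cap\mathrm{Scal}_n(C)=\mathrm{Scal}_n(C)$ from the fact that proper closed subgroups of $\mathbb{G}_m$ are finite, and invoke Cassidy's classification of the $\delta$-subgroups of $\mathbb{G}_m$ (the paper cites \cite[Prop.~31]{Cassidy72}) to get $\mathrm{Scal}_n(C_0)\subseteq G\cap\mathrm{Scal}_n(C)$, with strictness again coming from the failure of (4).
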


\begin{proof} If $G\cap\mathrm{Scal}_n(C)$ were finite then this group would be {$\delta$-}constant, and so the equation would be integrable by Proposition~\ref{iso-piso}. Therefore we may assume that $G \cap \mathrm{Scal}_n(C)$ and $H \cap \mathrm{Scal}_n(C)$ are infinite, {in which case $H\cap\mathrm{Scal}_n(C)=\mathrm{Scal}_n(C)$.} It follows from \cite[Prop.~31]{Cassidy72} that any infinite $\delta$-subgroup of $\mathbb{G}_m(C)\simeq\mathrm{Scal}_n(C)$ contains $\mathbb{G}_m(C_0)$. By Proposition~\ref{iso-piso}, the containment $\mathrm{Scal}_n(C_0)\subset G\cap\mathrm{Scal}_n(C)$ must be proper. \end{proof}

\begin{lem} \label{det} Assume that we are in case S, Q, or M. Suppose that $\sigma(Y)=AY$ with $A\in \GL_n(C_0(x))$ is projectively integrable but not integrable, and let $G$ be its $\sigma\delta$-Galois group. Then $G\cap\mathrm{Scal}_n(C)$ and $\mathrm{det}(G)$ coincide as subgroups of $\mathbb{G}_m(C)$.

In particular, in cases S and M we have that $G\cap\mathrm{Scal}_n(C)=\mathbb{G}_m(C)$, whereas in case Q we have that $G\cap\mathrm{Scal}_n(C)$ {is either} $\mathbb{G}_m(C)$ or $\{c\in C^\times \ | \ \delta(\frac{\delta(a)}{a})=0\}$.\end{lem}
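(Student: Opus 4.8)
The plan is to first establish the equality $G\cap\mathrm{Scal}_n(C)=\mathrm{det}(G)$ as subgroups of $\mathbb{G}_m(C)$, and then to compute the possibilities for this common subgroup in each of the three cases. For the first part, recall that since $\sigma(Y)=AY$ is projectively integrable but not integrable, Corollary~\ref{nonint-scal-cor} tells us that $G\cap\mathrm{Scal}_n(C)\supsetneqq\mathrm{Scal}_n(C_0)$. Combined with Cassidy's classification \cite[Prop.~31]{Cassidy72} of $\delta$-subgroups of $\mathbb{G}_m(C)$, the group $G\cap\mathrm{Scal}_n(C)$ is an infinite $\delta$-subgroup and hence contains $\mathbb{G}_m(C_0)$; concretely it is either all of $\mathbb{G}_m(C)$ or a proper $\delta$-subgroup defined by a linear homogeneous differential equation in the logarithmic derivative. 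The key observation is that the restriction of $\det$ to the scalar matrices $\mathrm{Scal}_n(C)\simeq\mathbb{G}_m(C)$ is the $n$-power map, which is a surjective isogeny of $\mathbb{G}_m(C)$ with finite kernel $\mu_n$. Since both $G\cap\mathrm{Scal}_n(C)$ and $\mathrm{det}(G)$ are $\delta$-subgroups of $\mathbb{G}_m(C)$ containing $\mathbb{G}_m(C_0)$, and the $n$-power map identifies them up to the finite subgroup $\mu_n\subset\mathbb{G}_m(C_0)$, the two subgroups must coincide.

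For the second part, I would identify $\mathrm{det}(G)$ with the $\sigma\delta$-Galois group of the rank-one system $\sigma(y)=\mathrm{det}(A)y$, as noted at the start of the proof of Proposition~\ref{iso-piso}. Writing $a:=\mathrm{det}(A)\in C_0(x)^\times$, the subgroup $\mathrm{det}(G)\subseteq\mathbb{G}_m(C)$ is the $\sigma\delta$-Galois group of $\sigma(y)=ay$, and by Cassidy's classification it is determined by which linear differential relations over $C$ are satisfied by the logarithmic derivative of a solution. Since the system is \emph{not} integrable, Proposition~\ref{iso-piso} guarantees that $\mathrm{det}(G)\not\subseteq\mathbb{G}_m(C_0)$, so $\mathrm{det}(G)$ is strictly larger than $\mathbb{G}_m(C_0)$ and hence infinite. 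In cases S and M the relevant computation should show that no proper nontrivial $\delta$-relation can hold, forcing $\mathrm{det}(G)=\mathbb{G}_m(C)$; this reflects the fact that the derivation $\delta=x\tfrac{d}{dx}$ interacts with the shift (case S) and with the Mahler structure (case M) in a way that leaves no room for an intermediate $\delta$-subgroup once $\mathbb{G}_m(C_0)$ is strictly exceeded. In case Q, by contrast, there is an additional degenerate possibility, and $\mathrm{det}(G)$ may be the proper $\delta$-subgroup $\{c\in C^\times \mid \delta(\tfrac{\delta(a)}{a})=0\}$ cut out by the vanishing of the second logarithmic derivative of $a$.

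The main obstacle I anticipate is the case-by-case analysis in the second part, specifically ruling out intermediate $\delta$-subgroups in cases S and M and verifying the precise degenerate subgroup in case Q. The cleanest approach is to appeal directly to the structure theorem Proposition~\ref{sigmagp}: since projective integrability lets us reduce (via $\tilde{A}=\mathrm{det}(A)^{-1}\otimes A^{\otimes n}$ and Proposition~\ref{SS-thm}) to constant systems, the possible $\sigma$-Galois groups over $k$ are exactly those enumerated there, and the rank-one analysis of $\sigma(y)=ay$ then determines $\mathrm{det}(G)$ from the orbit of $a$ under $\sigma$ together with the differential data. The delicate point is that the classification in Proposition~\ref{sigmagp} sees an extra $\mathbb{Z}/p\mathbb{Z}$ factor in cases Q and M but not in case S, and tracking how this extra finite part interacts with the $\delta$-structure is precisely what distinguishes case Q (where the second-logarithmic-derivative subgroup survives) from cases S and M (where it does not). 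I would handle this by writing down explicitly the logarithmic-derivative condition $\delta(\tfrac{\delta(a)}{a})=0$ and checking, using the definition of $\delta$ in each case, whether it can hold for a non-integrable $a$: in case Q this is a genuine constraint satisfiable by suitable $a$, while in cases S and M the form of $\delta$ forces the stronger conclusion $\mathrm{det}(G)=\mathbb{G}_m(C)$.
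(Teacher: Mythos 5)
Your argument for the central equality $G\cap\mathrm{Scal}_n(C)=\det(G)$ has a genuine gap. The $n$-power map gives you only one containment: since $G\cap\mathrm{Scal}_n(C)$ is an infinite $\delta$-subgroup of $\mathbb{G}_m(C)$ it is divisible, so $G\cap\mathrm{Scal}_n(C)=(G\cap\mathrm{Scal}_n(C))^n=\det(G\cap\mathrm{Scal}_n(C))\subseteq\det(G)$. But the reverse containment does not follow from saying that the $n$-power map ``identifies them up to $\mu_n$'': $\det(G)$ is the image of all of $G$, not just of its scalar part, and nothing in your argument rules out, say, $G\cap\mathrm{Scal}_n(C)=W$ while $\det(G)=\mathbb{G}_m(C)$ (both contain $\mathbb{G}_m(C_0)$, and the $n$-power map is surjective on each). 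The paper closes this by exploiting the exact sequence $1\to G\cap\mathrm{Scal}_n(C)\to G\to\tilde{G}\to 1$ of Proposition~\ref{piso-seq-prop}, in which $\tilde{G}$ is $\delta$-constant and hence of $\delta$-type $0$: if $\det(G)=\mathbb{G}_m(C)$ then $G$ has $\delta$-type at least $1$, so $G\cap\mathrm{Scal}_n(C)$ must have $\delta$-type $1$ and therefore equals $\mathrm{Scal}_n(C)$; if instead $\det(G)=W$ (possible only in case Q), then divisibility gives $G\cap\mathrm{Scal}_n(C)\subseteq W$, the only infinite proper $\delta$-subgroup of $W$ is $\mathbb{G}_m(C_0)$ (coming from right factors of the operator $\delta$), and Corollary~\ref{nonint-scal-cor} excludes that possibility, forcing equality. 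You never use the $\delta$-constancy of the quotient $\tilde{G}$, and without it the equality is not established.

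A second, smaller gap: the list of possibilities for $\det(G)$ --- all of $\mathbb{G}_m(C)$ in cases S and M, and either $\mathbb{G}_m(C)$ or $W$ in case Q, once one knows $\det(G)\not\subseteq\mathbb{G}_m(C_0)$ --- is not something you derive; you write that ``the relevant computation should show'' it. In the paper this is imported from the rank-one classifications of \cite{HaSi08} (Cor.~3.4 and Prop.~4.3(2)) and \cite{DHR15} (Prop.~3.1), and the hypothesis $\det(A)\in C_0(x)$ matters there. Your suggestion to read this off from Proposition~\ref{sigmagp} after reducing to a constant system cannot work as stated, because that proposition classifies $\sigma$-Galois groups (linear algebraic groups), whereas the distinction between $\mathbb{G}_m(C)$ and the proper $\delta$-subgroup $W$ is invisible at that level --- both are Zariski-dense in $\mathbb{G}_m(C)$.
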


\begin{proof} Since $\sigma(Y)=AY$ is not integrable, it follows from Proposition~\ref{iso-piso} that $\mathrm{det}(G)\not\subset\mathbb{G}_m(C_0)$. In cases S and M this forces $\mathrm{det}(G)=\mathbb{G}_m(C)$, by \cite[Cor.~3.4(1)]{HaSi08} in case S and \cite[Prop.~3.1]{DHR15} in case M (since $\mathrm{det}(A)\in C_0(x)$). In case Q we know that either $\mathrm{det}(G)=\{a\in C^\times \ | \ \delta(\tfrac{\delta(a)}{a})=0\}$ or else $\mathrm{det}(G)=\mathbb{G}_m(C)$, by \cite[Cor.~3.4(2) and Prop.~4.3(2)]{HaSi08}.

First suppose that $\mathrm{det}(G)=\mathbb{G}_m(C)$. Then the $\delta$-type of $G$ is at least $1$. Since $\tilde{G}$ is $\delta$-constant, its $\delta$-type is $0$. It now follows that $G\cap\mathrm{Scal}_n(C)$ has $\delta$-type $1$, for otherwise it would follow from Proposition~\ref{piso-seq-prop} that $G$ has $\delta$-type $0$, a contradiction. This shows that $G\cap\mathrm{Scal}_n(C)=\mathrm{Scal}_n(C)$, because any proper $\delta$-subgroup of $\mathbb{G}_m(C)$ has $\delta$-type $0$.

Now suppose that $\mathrm{det}(G)=\{a\in C^\times \ | \ \delta(\tfrac{\delta(a)}{a})=0\}=:W$. Since $\mathrm{det}|_{\mathrm{Scal}_n(C)}$ coincides with the $n$-power map and $G\cap\mathrm{Scal}_n(C)$ is infinite, and therefore divisible, it follows that $G\cap\mathrm{Scal}_n(C)\subseteq W$. If $L$ is a linear differential operator then any $\delta$-subgroup of the group $\{a \in C \ | \ L(a) = 0\}$ is of the form $\{a \in C \ | \ L_1(a) = 0\}$ where $L_1$ is a right factor of $L$.  Therefore the infinite $\delta$-subgroups of $W$ are defined by right-hand factors of the operator $\delta$, but these are only $\delta$ and $1$. It follows that either $G\cap\mathrm{Scal}_n(C)=W$, or else $G\cap\mathrm{Scal}_n(C)=\mathrm{Scal}_n(C_0)$. It now follows from Corollary~\ref{nonint-scal-cor} that $G\cap\mathrm{Scal}_n(C)=W$. \end{proof}

The following technical lemma will allow us to restrict the possible groups that may occur as Galois groups for projectively integrable linear difference equations in cases S, Q, and M.

\begin{lem} \label{tech-lem} Suppose that $H$ is a linear algebraic group such that $\mathrm{Scal}_n(C)\subset H \subset \mathrm{GL}_n(C)$ and \[H/\mathrm{Scal}_n(C)\simeq \mathbb{G}_m(C)^r\times\mathbb{G}_a(C)^s\times\mathbb{Z}/t\mathbb{Z}\times\mathbb{Z}/p\mathbb{Z},\] where $r$ and $s$ are nonnegative integers, $s\leq 1$, and $t$ and $p$ are positive integers.

Then the connected component of the identity $H^0\simeq\mathbb{G}_m(C)^{r+1}\times\mathbb{G}_a(C)^s$, and there are exact sequences \begin{gather} \label{tech-ses-1} 1\longrightarrow H^0\times\mathbb{Z}/t\mathbb{Z}\longrightarrow H \longrightarrow \mathbb{Z}/p\mathbb{Z}\longrightarrow 1 \intertext{and} \label{tech-ses-2}
1\longrightarrow H^0\times\mathbb{Z}/p\mathbb{Z}\longrightarrow H \longrightarrow \mathbb{Z}/t\mathbb{Z}\longrightarrow 1. \end{gather} Moreover, if $\mathrm{gcd}(n,t,p)=1$, then both exact sequences are split, and \begin{equation}\label{tech-ses-3}H\simeq\mathbb{G}_m(C)^{r+1}\times\mathbb{G}_a(C)^s\times\mathbb{Z}/t\mathbb{Z}\times\mathbb{Z}/p\mathbb{Z}.\end{equation}
\end{lem}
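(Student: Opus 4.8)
The plan is to analyze $H$ by first understanding its identity component and then its component group. Since $\mathrm{Scal}_n(C)\simeq\mathbb{G}_m(C)$ is connected and contained in $H$, the quotient map $H\to H/\mathrm{Scal}_n(C)$ restricts to a surjection $H^0\to(H/\mathrm{Scal}_n(C))^0$. The identity component of the given quotient is $\mathbb{G}_m(C)^r\times\mathbb{G}_a(C)^s$, and since $H^0$ is an extension of this connected abelian group by $\mathrm{Scal}_n(C)\simeq\mathbb{G}_m(C)$, and all of these are connected abelian linear algebraic groups, $H^0$ itself is connected abelian with $H^0/\mathrm{Scal}_n(C)\simeq\mathbb{G}_m(C)^r\times\mathbb{G}_a(C)^s$. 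First I would verify that $H^0\simeq\mathbb{G}_m(C)^{r+1}\times\mathbb{G}_a(C)^s$: its unipotent part must map isomorphically onto $\mathbb{G}_a(C)^s$ (as $\mathrm{Scal}_n(C)$ is a torus), and its maximal torus is an extension of $\mathbb{G}_m(C)^r$ by $\mathbb{G}_m(C)$, which splits because extensions of tori by tori in the category of linear algebraic groups are themselves tori of the summed rank. This gives the claimed form of $H^0$.

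\textbf{The two exact sequences.} Next I would produce the sequences \eqref{tech-ses-1} and \eqref{tech-ses-2}. The component group $H/H^0$ is a quotient of $H/\mathrm{Scal}_n(C)$ by the connected subgroup $H^0/\mathrm{Scal}_n(C)$, hence $H/H^0\simeq\mathbb{Z}/t\mathbb{Z}\times\mathbb{Z}/p\mathbb{Z}$. To obtain \eqref{tech-ses-1}, I would let $H_1\subset H$ be the preimage of the $\mathbb{Z}/t\mathbb{Z}$ factor under $H\to H/H^0\simeq\mathbb{Z}/t\mathbb{Z}\times\mathbb{Z}/p\mathbb{Z}$; then $H_1$ is the kernel of the composite surjection $H\to\mathbb{Z}/p\mathbb{Z}$, so $H_1=H^0\times\mathbb{Z}/t\mathbb{Z}$ once I check this extension of $\mathbb{Z}/t\mathbb{Z}$ by the connected abelian $H^0$ splits. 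Splitting here follows because the cyclic group $\mathbb{Z}/t\mathbb{Z}$ lifts into $H_1$: choosing any preimage of a generator gives an element of finite order modulo $H^0$, and since $H^0$ is a connected abelian group (a product of $\mathbb{G}_m$'s and $\mathbb{G}_a$'s) one can adjust by a divisible element to obtain a genuine lift of the correct order. Sequence \eqref{tech-ses-2} is obtained symmetrically by swapping the roles of $t$ and $p$.

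\textbf{The coprimality splitting.} The final and most delicate step is to show that when $\mathrm{gcd}(n,t,p)=1$ both sequences split, giving the product decomposition \eqref{tech-ses-3}. Here I would use that a generator $c$ of $\mathbb{Z}/p\mathbb{Z}$ lifts to an element $g\in H$ whose image in $H/H^0$ has order exactly $p$; the obstruction to splitting \eqref{tech-ses-1} is whether $g$ can be chosen of order exactly $p$ in $H$ rather than merely modulo $H^0\times\mathbb{Z}/t\mathbb{Z}$. The power $g^p$ lands in $H^0\times\mathbb{Z}/t\mathbb{Z}$, and I would analyze its $\mathbb{Z}/t\mathbb{Z}$-component together with its scalar component. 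The role of the hypothesis $\mathrm{gcd}(n,t,p)=1$ is precisely to guarantee that the relevant correction can be absorbed: since $p$ is invertible modulo $t$ and the scalar subgroup $\mathrm{Scal}_n(C)$ is $p$-divisible, one solves the lifting problem by extracting $p$-th roots in $\mathbb{G}_m(C)$ and inverting $p$ modulo $t$. \textbf{I expect this coprimality step to be the main obstacle,} since it requires carefully tracking how the scalar subgroup $\mathrm{Scal}_n(C)$ interacts with both finite cyclic quotients simultaneously, and the factor of $n$ enters through the $n$-power behavior of $\mathrm{Scal}_n(C)\simeq\mathbb{G}_m(C)$ relative to the determinant; once both cyclic factors lift to genuine finite-order complementary subgroups, the direct product decomposition \eqref{tech-ses-3} follows immediately from the connectedness and abelianness of $H^0$.
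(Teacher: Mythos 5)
Your overall architecture matches the paper's (identify $H^0$, lift the two cyclic factors, then show the lifts commute), but several key steps rest on justifications that are false or missing, and the one idea that actually powers the paper's proof never appears in your plan. That idea is: any commutator in $H$ maps to $1$ in the abelian quotient $H/\mathrm{Scal}_n(C)$, hence lies in $\mathrm{Scal}_n(C)$, and since it has determinant $1$ it lies in the \emph{finite} group $\mu_n$ of $n$-th roots of unity; connectedness then forces commutators with one entry in $H^0$ to be trivial, so $H^0$ is central in $H$. Your substitute --- ``$H^0$ is an extension of an abelian group by an abelian group, hence abelian'' --- is not a valid inference (the Heisenberg group is a counterexample), and in any case abelianness of $H^0$ is weaker than the centrality in $H$ that the later steps need. (Your conclusion about $H^0$ is salvageable by another route: $H^0$ is connected solvable, so its derived group is connected and unipotent, yet contained in $\mathrm{Scal}_n(C)\simeq\mathbb{G}_m(C)$, hence trivial; but this still does not give centrality in all of $H$.) Centrality already matters in your lifting step: if $g$ is a preimage of a generator of $\mathbb{Z}/t\mathbb{Z}$ and you correct it by $\bar{g}\in H^0$ with $\bar{g}^t=g^t$, the identity $(g\bar{g}^{-1})^t=1$ requires $\bar{g}$ to commute with $g$. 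The paper sidesteps this by choosing $g$ so that $\rho(g)$ generates exactly the $\mathbb{Z}/t\mathbb{Z}$ factor, whence $g^t\in\mathrm{Scal}_n(C)$, which is both divisible and central.

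The coprimality step is where your plan goes furthest astray. Once lifts $h_1$ of order $t$ and $h_2$ of order $p$ exist, both sequences \eqref{tech-ses-1} and \eqref{tech-ses-2} already admit sections; the only remaining obstruction to \eqref{tech-ses-3} is that $h_1$ and $h_2$ might fail to commute. Your closing claim that the product decomposition ``follows immediately from the connectedness and abelianness of $H^0$'' once both cyclic factors lift is therefore false, and the mechanism you propose ($p$-divisibility of $\mathrm{Scal}_n(C)$, inverting $p$ modulo $t$) does not address the actual obstruction. The correct argument is the $\mu_n$ trick again: $\alpha:=h_1h_2h_1^{-1}h_2^{-1}$ lies in $\mu_n$ because it is a commutator of determinant $1$ lying in $\mathrm{Scal}_n(C)$; since $\alpha$ is central and $h_1^t=1=h_2^p$, a short computation gives $\alpha^t=1$ and $\alpha^p=1$, so $\alpha\in\mu_{\gcd(n,t,p)}=\{1\}$. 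This is the only place the hypothesis $\gcd(n,t,p)=1$ enters.
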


\begin{proof} Let $\tilde{H}:=\mathbb{G}_m(C)^r\times\mathbb{G}_a(C)^s\times\mathbb{Z}/t\mathbb{Z}\times\mathbb{Z}/p\mathbb{Z}$ and $\rho:H\twoheadrightarrow \tilde{H}$ be the quotient by $\mathrm{Scal}_n(C)$.

We begin by showing that the connected component of the identity $H^0$ is in the center $Z(H)$, {so} in particular $H^0$ is commutative. For any $g,h\in H^0$, we have that $\rho(ghg^{-1}h^{-1})=1$, and therefore $ghg^{-1}h^{-1}\in\mathrm{Scal}_n(C)$. But since $\mathrm{det}(ghg^{-1}h^{-1})=1$, it follows that $ghg^{-1}h^{-1}$ lies in the cyclic subgroup $\mu_n\subset\mathrm{Scal}_n(C)$ consisting of $n$-th roots of unity. For any fixed $g\in H$, the map $H^0\rightarrow\mu_n:h\mapsto ghg^{-1}h^{-1}$ has image $\{1\}$, since $H^0$ is connected, so $gh=hg$ for any $g\in H$ and $h\in H^0$.

Now we show that $\rho$ restricts to a surjection $H^0\twoheadrightarrow \tilde{H}^0$. The group $\rho(H^0)$ is of finite index in $\tilde{H}$ so $\tilde{H}^0 \subset \rho(H^0)$ {by \cite[Prop.~7.3(b)]{humphreys}}. Since $\rho(H^0)$ is connected, we have $\rho(H^0) \subset \tilde{H}^0$ and so $\rho(H^0) = \tilde{H}^0$.

We claim that $H^0\simeq\mathbb{G}_m(C)^{r+1}\times\mathbb{G}_a(C)^s$. Since $H^0$ is commutative and connected, we know that $H^0\simeq \mathbb{G}_m(C)^{r'}\times\mathbb{G}_a(C)^{s'}$ for some non-negative integers $r'$ and $s'$. We claim that $r'=r+1$ and $s'=s$.  We have $\Scal_n (C)\subset   \mathbb{G}_m(C)^{r'}$, $\rho(\mathbb{G}_m(C)^{r'}) \subset \mathbb{G}_m(C)^{r} $, and $\rho(\mathbb{G}_a(C)^{s'}) \subset \mathbb{G}_a(C)^{s}$.  Comparing dimensions, we have $r' = r+1$ and $s'=s$.

To prove the exact sequence (\ref{tech-ses-1}), let $h \in H$ be chosen so that $\rho(h)$ generates $\ZX/t\ZX$. Since $\rho(h^t) = 1$, we have that $h^t \in \Scal_n(C)$.  Since $\Scal_n(C)$ is divisible, there is some $\bar{h} \in \Scal_n(C)$ such that $\bar{h}^t = h^t$. For $h_1 = h\bar{h}^{-1}$ the group $<h_1>$ generated by $h_1$ is isomorphic to $\ZX/t\ZX$ and $<h_1> \cap H^0 = \{1\}$. Therefore we have an exact sequence 
\[1\longrightarrow H^0\times<h_1>\longrightarrow H \longrightarrow \mathbb{Z}/p\mathbb{Z}\longrightarrow 1,\]
yielding (\ref{tech-ses-1}). A similar argument {produces $h_2 \in H$ of} order $p$ together with an exact sequence 

\[1\longrightarrow H^0\times<h_2>\longrightarrow H \longrightarrow \mathbb{Z}/t\mathbb{Z}\longrightarrow 1,\]
yielding  (\ref{tech-ses-2}). 

In order to prove \eqref{tech-ses-3}, it {would be sufficient} to show that the lifts $h_1$ and $h_2$ chosen above commute. {red Since} $\rho(h_1h_2h_1^{-1}h_2^{-1})=1$, {it follows that} $h_1h_2h_1^{-1}h_2^{-1}\in\mathrm{Scal}_n(C)$. Since $\mathrm{det}(h_1h_2h_1^{-1}h_2^{-1})=1$, it follows that $h_1h_2h_1^{-1}h_2^{-1}=:\alpha\in \mu_n\subset\mathrm{Scal}_n(C)$, the group of $n$-th roots of unity. Since $\alpha h_i=\alpha h_i$ for $i=1,2$, it follows that \begin{gather*}\alpha^t=h_1^{-t}\alpha^t=(h_1^{-1}\alpha)^t=(h_2h_1^{-1}h_2^{-1})^t=h_2h_1^{-t}h_2^{-1}=1
\intertext{and}
\alpha^p=\alpha^ph_2^p=(\alpha h_2)^p=h_1h_2^ph_1^{-1}=1,\end{gather*}
and therefore $\alpha\in\mu_n\cap\mu_t\cap\mu_p=\mu_d$, where $d:=\mathrm{gcd}(n,t,p)$. Hence, the lifts $h_1$ and $h_2$ commute whenever $d=1$.\end{proof}

\begin{thm} \label{p-thm} Suppose that, in case S, Q, or M, the system \begin{equation}\label{eq-p}\sigma(Y)=AY \ \ \text{with} \ A\in\GL_n(C_0(x))\end{equation} is projectively integrable but not integrable. \begin{enumerate}

\item In case S, $G$ is isomorphic to a group of the form \[\mathbb{G}_m(C)\times \mathbb{G}_m(C_0)^r\times\mathbb{Z}/t\mathbb{Z},\] where $r$ is a nonnegative integer and $t$ is a positive integer.

\item In case Q, the connected component of the identity $G^0\simeq W\times \mathbb{G}_m(C_0)^r \times\mathbb{G}_a(C_0)^s$, where {$W$ is either $\mathbb{G}_m(C)$ or $\left\{a\in C^\times \ \middle| \ \delta\bigl(\frac{\delta(a)}{a}\bigr)=0\right\}$,} $r$ and $s$ are nonnegative integers, and $s\leq 1$. There are exact sequences \begin{gather}
\label{pthm-qseq1} 1 \longrightarrow G^0\times \mathbb{Z}/t\mathbb{Z}\longrightarrow G\longrightarrow \mathbb{Z}/p\mathbb{Z} \longrightarrow 1
\intertext{and}
\label{pthm-qseq2} 1 \longrightarrow G^0\times \mathbb{Z}/p\mathbb{Z}\longrightarrow G\longrightarrow \mathbb{Z}/t\mathbb{Z} \longrightarrow 1, \end{gather}
where $p$ and $t$ are positive integers. Moreover, if $\mathrm{gcd}(n,t,p)=1$, then both sequences \eqref{pthm-qseq1} and \eqref{pthm-qseq2} are split, and \begin{equation}\label{pthm-qseq3}G\simeq W\times\mathbb{G}_m(C_0)^r\times\mathbb{G}_a(C_0)^s\times\mathbb{Z}/t\mathbb{Z}\times\mathbb{Z}/p\mathbb{Z}.\end{equation}

\item In case M, the connected component of the identity $G^0\simeq\mathbb{G}_m(C)\times\mathbb{G}_m(C_0)^r\times\mathbb{G}_a(C_0)^s$, where $r$ and $s$ are nonnegative integers and $s\leq 1$. There are exact sequences \begin{gather}
\label{pthm-mseq1} 1 \longrightarrow G^0\times \mathbb{Z}/t\mathbb{Z}\longrightarrow G\longrightarrow \mathbb{Z}/p\mathbb{Z} \longrightarrow 1
\intertext{and}
\label{pthm-mseq2} 1 \longrightarrow G^0\times \mathbb{Z}/p\mathbb{Z}\longrightarrow G\longrightarrow \mathbb{Z}/t\mathbb{Z} \longrightarrow 1, \end{gather}
where $t$ and $p$ are positive integers. Moreover, if $\mathrm{gcd}(n,t,p)=1$, then both sequences \eqref{pthm-mseq1} and \eqref{pthm-mseq2} are split, and \[G\simeq \mathbb{G}_m(C)\times\mathbb{G}_m(C_0)^r\times\mathbb{G}_a(C_0)^s\times\mathbb{Z}/t\mathbb{Z}\times\mathbb{Z}/p\mathbb{Z}.\]
\end{enumerate}
\end{thm}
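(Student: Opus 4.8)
The plan is to combine the structural description of projectively integrable systems from Proposition~\ref{piso-seq-prop} with the determinant computation of Lemma~\ref{det} and the group-theoretic packaging of Lemma~\ref{tech-lem}. Since $\sigma(Y)=AY$ is projectively integrable, the associated system $\sigma(Y)=\tilde{A}Y$ with $\tilde{A}=\mathrm{det}(A)^{-1}\otimes(A^{\otimes n})$ is integrable by Proposition~\ref{piso-seq-prop}(1), so its $\sigma\delta$-Galois group $\tilde{G}$ is $\delta$-constant and hence falls under Theorem~\ref{thm4}. In case Q and case M this tells us $\tilde{G}\simeq\mathbb{G}_m(C_0)^r\times\mathbb{G}_a(C_0)^s\times\mathbb{Z}/t\mathbb{Z}\times\mathbb{Z}/p\mathbb{Z}$, while in case S it has the smaller form $\mathbb{G}_m(C_0)^r\times\mathbb{Z}/t\mathbb{Z}$. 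The short exact sequence $1\to G\cap\mathrm{Scal}_n(C)\to G\to\tilde{G}\to 1$ from Proposition~\ref{piso-seq-prop}(2) then expresses $G$ as an extension of this $\delta$-constant quotient by $G\cap\mathrm{Scal}_n(C)$.

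First I would pin down the scalar subgroup. By Lemma~\ref{det}, since the system is projectively integrable but not integrable, $G\cap\mathrm{Scal}_n(C)=\mathrm{det}(G)$ equals $\mathbb{G}_m(C)$ in cases S and M, and equals $W=\{a\in C^\times\mid\delta(\frac{\delta(a)}{a})=0\}$ or $\mathbb{G}_m(C)$ in case Q. This identifies the kernel of the sequence. Substituting into the exact sequence, $G$ is an extension of the $\delta$-constant group $\tilde{G}$ by $\mathbb{G}_m(C)$ (or $W$). I would then argue that $G^0$ has the stated form: the connected component surjects onto $\tilde{G}^0\simeq\mathbb{G}_m(C_0)^r\times\mathbb{G}_a(C_0)^s$ with kernel the connected scalar group $\mathbb{G}_m(C)$ (or $W$, which is connected of $\delta$-type $1$), and since all these are commutative and the extension of a product of $\mathbb{G}_m(C_0)$'s and $\mathbb{G}_a(C_0)$'s by a central connected group splits at the level of the identity component, one gets $G^0\simeq W\times\mathbb{G}_m(C_0)^r\times\mathbb{G}_a(C_0)^s$ in case Q (with $W=\mathbb{G}_m(C)$ the generic alternative), $G^0\simeq\mathbb{G}_m(C)\times\mathbb{G}_m(C_0)^r\times\mathbb{G}_a(C_0)^s$ in case M, and the correspondingly simpler $\mathbb{G}_m(C)\times\mathbb{G}_m(C_0)^r$ in case S where $s$ is forced to be $0$.

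For the component group and the exact sequences, I would invoke Lemma~\ref{tech-lem} essentially verbatim. The group $H:=G$ contains $\mathrm{Scal}_n(C)$ (in cases S and M, where $G\cap\mathrm{Scal}_n(C)=\mathbb{G}_m(C)=\mathrm{Scal}_n(C)$), and the quotient $G/\mathrm{Scal}_n(C)\simeq\tilde{G}$ has precisely the shape required by the hypothesis of Lemma~\ref{tech-lem}. Applying that lemma produces the two exact sequences \eqref{pthm-mseq1}--\eqref{pthm-mseq2} (and \eqref{pthm-qseq1}--\eqref{pthm-qseq2} in case Q, after accounting for the possibly larger connected kernel $W$), together with the splitting and the product decomposition \eqref{pthm-qseq3} when $\mathrm{gcd}(n,t,p)=1$. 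For case S, where $\tilde{G}$ has no $\mathbb{G}_a$ factor and only one cyclic factor $\mathbb{Z}/t\mathbb{Z}$, the argument degenerates to a single extension and one directly gets $G\simeq\mathbb{G}_m(C)\times\mathbb{G}_m(C_0)^r\times\mathbb{Z}/t\mathbb{Z}$; I would remark that the $p$-factor disappears because the base field in case S does not carry the extra cyclic $\sigma$-structure present in cases Q and M.

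The main obstacle I anticipate is case Q, where Lemma~\ref{det} leaves open two possibilities for the scalar subgroup, namely $\mathbb{G}_m(C)$ and the strictly smaller $W$. The delicate point is that when $G\cap\mathrm{Scal}_n(C)=W\subsetneq\mathrm{Scal}_n(C)$, the group $G$ does \emph{not} contain all of $\mathrm{Scal}_n(C)$, so Lemma~\ref{tech-lem} cannot be applied directly to $G$ itself. To handle this I would pass to the group $G\cdot\mathrm{Scal}_n(C)$, or equivalently work with the quotient by $\mathrm{Scal}_n(C)\cap G$ and carefully track how the $n$-th-root-of-unity discrepancies in the commutator computations of Lemma~\ref{tech-lem} interact with the condition $\delta(\frac{\delta(a)}{a})=0$ defining $W$; the key is that $W$ is still central, connected, and divisible, so the lifting and splitting arguments of Lemma~\ref{tech-lem} go through with $W$ in place of $\mathrm{Scal}_n(C)$ once one checks that $W\cap\mu_t\cap\mu_p=\mu_d$ still holds, which it does since $W$ contains all roots of unity.
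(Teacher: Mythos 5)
Your overall architecture matches the paper's: use Proposition~\ref{piso-seq-prop} to get the extension $1\to G\cap\mathrm{Scal}_n(C)\to G\to\tilde{G}\to 1$ with $\tilde{G}$ $\delta$-constant of the form given by Theorem~\ref{thm4}, pin down the kernel via Lemma~\ref{det}, and package the answer with Lemma~\ref{tech-lem}. But there is a genuine gap at the central step. Lemma~\ref{tech-lem} is a statement about linear \emph{algebraic} groups: its proof uses the structure theorem for connected commutative algebraic groups ($H^0\simeq\mathbb{G}_m^{r'}\times\mathbb{G}_a^{s'}$), finite-index arguments for algebraic groups, etc. The group $G$ is a linear \emph{differential} algebraic group which is not Zariski closed (generically it looks like $\mathrm{Scal}_n(C)\cdot(\text{something defined over }C_0)$), so you cannot apply Lemma~\ref{tech-lem} to $G$ "essentially verbatim," even in cases S and M where $\mathrm{Scal}_n(C)\subset G$. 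Likewise, your assertion that "the extension of a product of $\mathbb{G}_m(C_0)$'s and $\mathbb{G}_a(C_0)$'s by a central connected group splits at the level of the identity component" is exactly the nontrivial content of the theorem and is left unproved: extensions in the category of differential algebraic groups can fail to split even when their algebraic analogues split (compare $1\to\mathbb{G}_m(C_0)\to\mathbb{G}_m(C)\to\mathbb{G}_a(C)\to 1$ via the logarithmic derivative), and the example following the theorem shows the finite parts genuinely need not split, so one cannot wave at commutativity here.

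The paper's resolution, which your proposal is missing, is a transfer argument: apply Lemma~\ref{tech-lem} not to $G$ but to the $\sigma$-Galois group $H$ (which contains $\mathrm{Scal}_n(C)$ by Corollary~\ref{nonint-scal-cor} in all three cases, so the dichotomy you worry about in case Q is not where the difficulty lies), obtaining partial sections $\psi$ of $\rho:H(C_0)\twoheadrightarrow\tilde{H}(C_0)$ inside the honest algebraic group $H(C_0)$; then prove that the images of these sections lie in $G$. That last step is the heart of the proof: given $h=\psi(\tilde h)\in H(C_0)$, choose $g\in G$ with $\rho(g)=\tilde h$, write $g=\alpha h$ with $\alpha\in\mathrm{Scal}_n(C)$, and use that every $g\in G\subset\GL_n(C_0)\cdot\mathrm{Scal}_n(C)$ satisfies $\delta(g)g^{-1}=\tfrac{1}{n}\bigl(\delta(\det(g))/\det(g)\bigr)\cdot I_n$ together with the identification $G\cap\mathrm{Scal}_n(C)=\det(G)$ from Lemma~\ref{det} to conclude $\alpha\in G\cap\mathrm{Scal}_n(C)$, hence $h\in G$. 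Without this logarithmic-derivative computation (or some substitute), neither the decomposition of $G^0$ nor the two exact sequences for $G$, nor their splitting when $\gcd(n,t,p)=1$, is established.
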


\begin{proof} (2) If $H\subset\mathrm{GL}_n(C)$ is the $\sigma$-Galois group of \eqref{eq-p} over $k$, it follows from Corollary~\ref{nonint-scal-cor} that $\mathrm{Scal}_n(C)\subset H$. By Proposition~\ref{piso-seq-prop}, $H/\mathrm{Scal}_n(C)\simeq\tilde{H}$, where $\tilde{H}$ is the $\sigma$-Galois group of an integrable system $\sigma(Y)=\tilde{A}Y$ with $\tilde{A}\in\mathrm{G}_{n^n}(C_0(x))$. By Proposition~\ref{intprop} and Theorem~\ref{thm4}, $\tilde{H}\simeq\mathbb{G}_m(C)^r\times\mathbb{G}_a(C)^s\times\mathbb{Z}/t\mathbb{Z}\times\mathbb{Z}/p\mathbb{Z}$, where $r$ and $s$ are nonnegative integers, $s\leq 1$, and $t$ and $p$ are positive integers. 

We have just shown that $H$ satisfies the hypotheses of Lemma~\ref{tech-lem}, which we remark remains valid after replacing $C$ with {$C_0$}. In the proof of Lemma~\ref{tech-lem}, we found partial sections \[\psi_0:\tilde{H}^0(C_0)\hookrightarrow H^0(C_0); \ \ \psi_1:\tilde{H}^0(C_0)\times\mathbb{Z}/t\mathbb{Z}\hookrightarrow H(C_0); \ \ \text{and} \ \ \psi_2:\tilde{H}^0(C_0)\times\mathbb{Z}/p\mathbb{Z} \hookrightarrow H(C_0)\] to the surjection $\rho:H\twoheadrightarrow \tilde{H}$ yielding the isomorphism $H^0(C_0)\simeq\mathrm{Scal}_n(C_0)\times\tilde{H}^0(C_0)$ as well as (the restrictions to $C_0$-points of) the exact sequences \eqref{tech-ses-1} and \eqref{tech-ses-2}. Under the supplemental hypothesis that $\mathrm{gcd}(n,t,p)=1$, we also showed that there is a section $\psi_3:\tilde{H}(C_0)\hookrightarrow H(C_0)$ yielding the isomorphism \eqref{tech-ses-3}.

From Proposition~\ref{piso-seq-prop} and Proposition~\ref{intprop}, we have an exact sequence \begin{equation}\label{pthm-gseq}1\longrightarrow G\cap\mathrm{Scal}_n(C)\longrightarrow G\longrightarrow \tilde{H}(C_0)\longrightarrow 1.\end{equation} Hence, to prove Theorem~\ref{p-thm}(2)---i.e., that $G^0\simeq(G\cap\mathrm{Scal}_n(C))\times\tilde{H}^0(C_0)$; the existence of the exact sequences \eqref{pthm-qseq1} and \eqref{pthm-qseq2}; and that if $\mathrm{gcd}(n,t,p)=1$ then \eqref{pthm-qseq3} holds---it suffices to show that the image of any of these partial sections $\psi$ is contained in $G$. To see this, let $\tilde{h}\in\tilde{H}(C_0)$ and $\psi(\tilde{h})=:h\in H(C_0)$. We wish to show that $h\in G$. In any case, it follows from \eqref{pthm-gseq} that there exists $g\in G$ such that $\rho(g)=\tilde{h}$. It follows that $\rho(gh^{-1})=1$ and therefore $g=\alpha h$ for some $\alpha\in\mathrm{Scal}_n(C)$. Since $h\in H(C_0)$ and $\alpha h=h\alpha$, {it follows that} $\delta(\alpha h)(\alpha h)^{-1}=\delta(\alpha)\alpha^{-1}$. Since $\alpha\in\mathrm{Scal}_n(C)$, {we have} $\delta(\alpha)\alpha^{-1}=\frac{1}{n}(\delta(\mathrm{det}(\alpha))/\mathrm{det}(\alpha))\cdot I_n$. On the other hand, since $G\subset \mathrm{GL}_n(C_0)\cdot\mathrm{Scal}_n(C)$, we know that $\delta(g)g^{-1}=\frac{1}{n}(\delta(\mathrm{det}(g))/\mathrm{det}(g))\cdot I_n$. It follows from Lemma~\ref{det} that $\alpha\in G\cap\mathrm{Scal}_n(C)$, and therefore $h\in G$, as we wanted to show.

(1)~and (3)~The proof of case Q also works in cases S and M, {where it also} follows from Lemma~\ref{det} that $G\cap\mathrm{Scal}_n(C)\simeq\mathbb{G}_m(C)$. {Moreover, in} case S it follows from Theorem~\ref{thm4}(1) that $s=0$ and $p=1$.\end{proof}

\subsection{Example}

The following example shows that Theorem~\ref{p-thm}(2) is optimal, in the sense that sequences \eqref{pthm-qseq1} and \eqref{pthm-qseq2} are not always split.  Working in case q, we claim that the $\sigma$-Galois group $G$ for \begin{equation}\label{eg-eq} \sigma^2(y)=q^{1/2}xy \quad \text{or equivalently}\quad \sigma(Y)=\begin{pmatrix} 0 & 1 \\ q^{1/2}x & 0\end{pmatrix} Y\end{equation} is $G=Q_8\cdot\mathbb{G}_m(C)=D_8\cdot\mathbb{G}_m(C)$,  a projectively $\delta$-constant group. To be precise, we claim that \begin{equation}\label{g}G=\left\{\begin{pmatrix} \alpha & 0 \\ 0 & \alpha\end{pmatrix}, \ \begin{pmatrix} \alpha & 0 \\ 0 & -\alpha\end{pmatrix}, \ \begin{pmatrix} 0 & \alpha \\ \alpha & 0 \end{pmatrix}, \ \begin{pmatrix} 0 & \alpha \\ -\alpha & 0 \end{pmatrix} \ \middle| \ \alpha\in C^\times\right\}.\end{equation}

Fix once and for all square roots $q^{1/2}$ and $x^{1/2}$ of $q$ and $x$, respectively, and extend $\sigma$ to $k_2:=k(x^{1/2})$ by $\sigma(x^{1/2})=q^{1/2}x^{1/2}$. Considering  \eqref{eg-eq} as an equation over $k_2$, note that \eqref{eg-eq}  is gauge equivalent to \begin{equation}\label{gauge} \sigma(Z)=\begin{pmatrix} x^{1/2} & 0 \\ 0 & -x^{1/2}\end{pmatrix}Z,\end{equation} via the gauge transformation given by \[T:=\begin{pmatrix}1 & x^{-1/2}\\ 1 & -x^{-1/2}\end{pmatrix}.\] Let $Z:=\mathrm{diag}(z_1, z_2 )$ denote a fundamental solution matrix for \eqref{gauge} over $k_2$, so that $\sigma(z_1)=x^{1/2}z_1$ and $\sigma(z_2)=-x^{1/2}z_2$, each $z_i$ is invertible, and $z_1$ and $z_2$ are {$C$-}linearly independent.

If we let $y_1:=z_1+z_2$ and $y_2:=z_1-z_2$ then \begin{equation}\label{yfundsol} Y:=\begin{pmatrix} y_1 & y_2 \\ \sigma(y_1) & \sigma(y_2)\end{pmatrix}\end{equation} is a fundamental solution  matrix for \eqref{eg-eq} over $k_2$ that satisfies \begin{equation} \label{basis}\sigma(y_1)=x^{1/2}y_2 \quad\text{and} \quad \sigma(y_2)=x^{1/2}y_1.\end{equation} Let $S:=k_2[Y,\mathrm{det}(Y)^{-1}]$ denote the corresponding Picard-Vessiot-ring for \eqref{eq} over $k_2$. Since \[\frac{y_1^2-y_2^2}{y_1\sigma(y_2)-y_2\sigma(y_1)}=\frac{y_1^2-y_2^2}{x^{1/2}(y_1^2-y_2^2)}=x^{-1/2},\] we have that $k_2\subset k[Y,\mathrm{det}(Y)^{-1}]=S$ is also a Picard-Vessiot ring for \eqref{eq} over $k$. Let $G:=\mathrm{Gal}_\sigma(S/k)$.

We claim that $y_1y_2=0$. By the results of \cite{hendriks:1997}, the choice of fundamental solution matrix $Y$ as in \eqref{yfundsol} identifies $G$ with a subgroup of \[\left\{\begin{pmatrix}  \alpha & 0 \\ 0 & \lambda  \end{pmatrix} \ \middle| \ \alpha,\lambda\in C^\times\right\} \cup \left\{\begin{pmatrix}  0 & \beta \\ \epsilon & 0  \end{pmatrix} \ \middle| \ \beta,\epsilon\in C^\times\right\}.\] If $T_\gamma\in\mathrm{GL}_2(C)$ is the matrix associated to $\gamma\in G$ we see that $\gamma(y_1y_2)=\pm\mathrm{det}(T_\gamma)y_1y_2$ for every $\gamma\in G$. On the other hand, if we let $\omega:=y_1\sigma(y_2)-y_2\sigma(y_1)=\mathrm{det}(Y)$ we see that $\gamma(\omega)=\mathrm{det}(T_\gamma)\omega$ for every $\gamma\in G$. Therefore $\gamma(y_1^2y_2^2/\omega^2)=y_1^2y_2^2/\omega^2$ for every $\gamma\in G$, which implies that $y_1^2y_2^2=f\omega^2$ for some $f\in k$, by the Galois correspondence. We proceed by contradiction, as in \cite[\S8.3]{Arreche15b}. Note that if $y_1y_2\neq 0$, then $f\neq 0$ is invertible. On the other hand, $\sigma^2(y_1y_2)=qx^2y_1y_2$, by \eqref{basis}. Since $\sigma(\omega)=-q^{1/2}x\omega$, we have that $\sigma^2(\omega)=q^{3/2}x^2\omega$. It follows that $\sigma^2(y_1y_2/\omega)=q^{-1/2}y_1y_2/\omega$, and therefore $\sigma^2(y_1^2y_2^2/\omega^2)=q^{-1}y_1^2y_2^2/\omega^2$. It follows that \[\frac{\sigma(f\sigma(f))}{f\sigma(f)}=\frac{\sigma^2(f)}{f}=\frac{\sigma^2(y_1^2y_2^2/\omega^2)}{y_1^2y_2^2/\omega^2}=q^{-1},\] and therefore there exists $c\in C^\times$ such that $f\sigma(f)=cx^{-1}$. But this is impossible, because $f\sigma(f)$ has even order of vanishing at $0$. This proves that $y_1y_2=0$, as we wanted to show.

There is a decomposition $S=S_0\oplus \dots\oplus S_{t-1}$, where each $S_i$ is a domain and a $\sigma^t$-Picard-Vessiot ring over $k$, considered as a $\sigma^t$-field, for $\sigma^t(Y)=\sigma^{t-1}(A)\dots\sigma(A)AY$, where $A:=\left(\begin{smallmatrix}0 &1 \\ q^{1/2}x & 0\end{smallmatrix}\right)$ and $\sigma:S_{i \pmod{t}}\rightarrow S_{i+1\pmod{t}}$ is an isomorphism of $\sigma^t$-rings . We claim that $t=2$. In general, $t$ is the largest integer such that there exists $z\in S$ with $\sigma(z)=\zeta_tz$, where $\zeta_t$ is a primitive $t$-th root of unity. Since $\sigma(z_1/z_2)=-z_1/z_2$, we have that $t$ is even, and therefore $S=S_{(0)}\oplus S_{(1)}$, where each of \[S_{(0)}:=\bigoplus_{i \ \text{even}}S_i\qquad\text{and}\qquad S_{(1)}:=\bigoplus_{i \ \text{odd}}S_i\] is a $\sigma^2$-Picard-Vessiot ring over $k$ for\begin{equation} \label{eq2} \sigma^2(Y)=\begin{pmatrix} q^{1/2}x & 0 \\ 0 & q^{3/2}x\end{pmatrix}Y,\end{equation} where we consider $k$ as a $\sigma^2$-field.

Let us consider \eqref{eq2} as an equation over the underlying $\sigma^2$-field of $k_2$, and observe that \begin{equation}\label{basis2}\begin{pmatrix}y_1 & 0\\ 0 & x^{1/2}y_1\end{pmatrix}=\begin{pmatrix} w_1 & 0 \\ 0 & w_2\end{pmatrix}\end{equation} is a fundamental solution matrix for \eqref{eq2} over $k_2$. We see that $k_2[y_1,y_1^{-1}]$ is a $\sigma^2$-Picard-Vessiot ring for \eqref{eq2} over $k_2$, and that $k_2\subset k[w_1,w_2,(w_1w_2)^{-1}] =k_2[y_1,y_1^{-1}]$ is a $\sigma^2$-Picard-Vessiot ring for \eqref{eq2} over the underlying $\sigma^2$-field of $k$. Using the algorithm of \cite{hendriks:1997}, we see that  $S_{(0)}$ and $S_{(1)}$ are domains, which implies that $t=2$, $S_{(0)}=S_0$, and $S_{(1)}=S_1$. 

It follows from \eqref{basis}  and the fact that $y_1y_2=0$ that we may take $S_0=k_2[y_1,y_1^{-1}]$ and $S_1=k_2[y_2,y_2^{-1}]$, which we emphasize are actually $\sigma^2$-Picard-Vessiot rings for \eqref{eq2} over $k$, not just over $k_2$. Letting $G_2:=\mathrm{Gal}_{\sigma^2}(S_0/k)$, we have that the embedding $G_2\hookrightarrow\mathrm{GL}_2(C)$ corresponding to the solution matrix \eqref{basis2} is given by \begin{equation}\label{g2}G_2=\left\{\begin{pmatrix}\alpha & 0 \\ 0 & \alpha\end{pmatrix}, \ \begin{pmatrix} \alpha & 0 \\ 0 & -\alpha\end{pmatrix} \ \middle| \ \alpha\in C^\times\right\},\end{equation} where $\gamma(w_1)=\alpha w_1$ and $\gamma(w_2)=\pm\alpha w_2$ for every $\gamma\in G_2$ {and} the sign is determined by $\gamma(\frac{w_2}{w_1})=\gamma(x^{1/2})=\pm x^{1/2}.$ 

There is also an exact sequence \begin{equation}\label{sequence}1\rightarrow G_2\rightarrow G\rightarrow\mathbb{Z}/2\mathbb{Z}\rightarrow 1,\end{equation} where the map $G_2\rightarrow G:\gamma\mapsto\tilde{\gamma}$ is determined by the rule $\tilde{\gamma}(y_1)=\gamma(y_1)$ and $\tilde{\gamma}(y_2)=\sigma\gamma\sigma^{-1}(y_2).$ It follows from \eqref{basis} that \[\sigma\gamma\sigma^{-1}(y_2)=\sigma\gamma(q^{1/2}x^{-1/2}y_1)=\sigma(q^{1/2}(\pm x^{-1/2})(\alpha y_1))=q^{1/2}(\pm q^{-1/2}x^{-1/2})(\alpha x^{1/2} y_2)=\pm\alpha y_2.\] Therefore the image of $G_2\hookrightarrow G:\gamma\mapsto\tilde{\gamma}$ is also given by \eqref{g2}.

Finally, it is clear that the $k$-linear map defined by $y_1\mapsto y_2$ and $y_2\mapsto y_1$ is a $\sigma$-automorphism of $S$ over $k$. Therefore $\left(\begin{smallmatrix} 0 & 1 \\ 1 & 0\end{smallmatrix}\right)\in G$ lifts the generator of $\mathbb{Z}/2\mathbb{Z}$ in \eqref{sequence}, and $G$ is given by \eqref{g}, as claimed.

\section{Hypertranscendence}\label{hyper-sec} 

In \cite{DHR15} and \cite{DHR16}, the authors  develop criteria to guarantee that the $\sd$-Galois group of (\ref{introeq}) is large when $\sigma$ is a $q$-dilation or a Mahler operator.  {They prove the validity of their criteria} by showing that certain {linear differential algebraic} groups cannot occur as $\sd$-Galois groups of integrable or projectively integrable equations. In their proof, they did not classify the $\sd$-Galois groups of such equations. Using this classification, we are now able to reexamine and extend their criteria.  We  begin with some preliminary results concerning Galois groups.

\begin{lem}\label{semilem0} Let $C,C_0, k, \sigma$ and $\delta$ be as in one of the cases S, Q, or M. If the identity component of the $\sigma$-Galois group $H(C)$  of 
\[\sigma(Y) = AY, \, A\in \mathrm{GL}_n(C_0(x))\]
over $k$ is a  semisimple linear algebraic group then the $\sigma\delta$-Galois group is also $H(C)$. 
\end{lem}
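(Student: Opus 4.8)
The plan is to play the two Galois groups off against each other using the density statement of Proposition~\ref{propdense} together with the rigidity of Zariski-dense differential subgroups of semisimple groups. Write $G$ for the $\sd$-Galois group and $H(C)$ for the $\sigma$-Galois group. By Proposition~\ref{propdense}(2), $G$ is a Zariski-dense $\delta$-subgroup of $H(C)$. I would first reduce the lemma to the assertion that the Kolchin identity component $G^0$ coincides with the Zariski identity component $H(C)^0$: the Zariski closure of a Kolchin-connected group is Zariski-connected, so $\overline{G^0}$ is a Zariski-connected finite-index subgroup of $H(C)$, hence equals $H(C)^0$, and thus $G^0$ is Zariski-dense in the semisimple group $H(C)^0$. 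Granting $G^0=H(C)^0$, the image of $G$ in the finite group $H(C)/H(C)^0$ is Zariski-dense, hence all of it, so $G=H(C)$.

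To prove $G^0 = H(C)^0$ I would pass to a power of $\sigma$ in order to work with connected groups. As in the proof of Proposition~\ref{iso-piso} (using \cite[Cor.~1.16]{PuSi} and \cite[Lem.~6.8]{HaSi08}), and since the component group of a $\sigma$-Galois group is cyclic, there is an integer $t$ such that the $\sigma^t$-Galois group of $\sigma^t(Y)=A_tY$ (with $A_t=\sigma^{t-1}(A)\cdots A\in\GL_n(C_0(x))$) is exactly $H(C)^0$ and the $\sigma^t\delta$-Galois group is $G^0$. Crucially, $\sigma^t$ is again an operator of type S, Q, or M (a shift by $t$, a $q^t$-dilation, or a Mahler operator with base $q^t\geq 2$), so Theorem~\ref{thm4} remains available for it. Thus it suffices to treat the connected case directly: $H:=H(C)^0=\mathbf{S}_1\times\cdots\times\mathbf{S}_m$ is semisimple with each $\mathbf{S}_i$ simple and nonabelian, and $G$ is a Zariski-dense $\delta$-subgroup (necessarily connected, since $\overline{G}=H$ is connected).

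The heart of the argument is Cassidy's classification of Zariski-dense differential algebraic subgroups of a semisimple group defined over the constants $C_0$ (cf.\ \cite{Cassidy72}). Unlike the commutative cases $\mathbb{G}_a$ and $\mathbb{G}_m$, which admit many proper Zariski-dense $\delta$-subgroups, a simple factor is rigid: it has no proper Zariski-dense $\delta$-subgroup other than a conjugate of its $C_0$-points, and distinct simple factors admit no Zariski-dense differential correlation. Consequently, if $G\neq H$ then there is a simple factor $\mathbf{S}:=\mathbf{S}_i$ for which the projection $\pi\colon H\twoheadrightarrow\mathbf{S}$ carries $G$ onto a $\delta$-constant subgroup $\pi(G)$, conjugate to $\mathbf{S}(C_0)$. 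Because $H$ is connected, $\pi$ is a morphism of algebraic groups over $C_0$, so by the Tannakian formalism it corresponds to a subquotient of the difference module of $\sigma(Y)=AY$ that is again defined over $C_0(x)$, with $\sigma$-Galois group $\mathbf{S}$ and $\sd$-Galois group $\pi(G)$. Since $\pi(G)$ is $\delta$-constant, this subquotient system is integrable, so Theorem~\ref{thm4} forces $\pi(G)$ to be abelian; but $\pi(G)$ is Zariski-dense in the nonabelian simple group $\mathbf{S}$, a contradiction. Hence $G=H$ in the connected case, which yields $G^0=H(C)^0$ in general and therefore $G=H(C)$.

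I expect the main obstacle to be invoking the semisimple case of Cassidy's theorem in exactly the usable form ``proper Zariski-dense $\Rightarrow$ $\delta$-constant on some simple quotient,'' and then verifying that the associated quotient descends to a subquotient difference module over $C_0(x)$ so that Theorem~\ref{thm4} genuinely applies. The passage to $\sigma^t$ is what makes the Tannakian descent clean: it removes the permutation of the simple factors $\mathbf{S}_i$ by the component group, which would otherwise prevent realizing the projection $\pi$ as a genuine quotient homomorphism.
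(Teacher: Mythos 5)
Your overall strategy---Cassidy's rigidity theorem for Zariski-dense differential subgroups of a semisimple group, Tannakian descent of the relevant quotient module to $C_0(x)$, and a contradiction with Theorem~\ref{thm4} via integrability---is exactly the engine of the paper's proof. The gap is in your reduction to the connected case. You assert that there is an integer $t$ for which the $\sigma^t$-Galois group of $\sigma^t(Y)=A_tY$ over $k$ is exactly $H(C)^0$, justified by ``the component group of a $\sigma$-Galois group is cyclic.'' That justification fails twice. First, in cases Q and M the component group need not be cyclic: Proposition~\ref{sigmagp} produces $\sigma$-Galois groups with component group $\mathbb{Z}/t\mathbb{Z}\times\mathbb{Z}/p\mathbb{Z}$. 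Second, and more seriously, the integer $t$ furnished by the decomposition $R=R_0\oplus\dots\oplus R_{t-1}$ of \cite[Cor.~1.16]{PuSi} is the number of idempotent components, and the corresponding $\sigma^t$-Galois group $H_t$ (the stabilizer of $R_0$) satisfies $H^0\subseteq H_t$ with $H/H_t$ cyclic of order $t$, but $H_t$ can be strictly larger than $H^0$: in case q the equation $\sigma(y)=q^{1/2}y$ has Picard-Vessiot ring $C(x^{1/2})$, a domain (so $t=1$), yet its $\sigma$-Galois group is $\mathbb{Z}/2\mathbb{Z}$. So no power of $\sigma$ need make the Galois group connected, and the passage to $\sigma^t$ does not in general eliminate the permutation of the simple factors by the component group---which, as you yourself note, is precisely what realizing your projection $\pi$ onto a single simple factor as a quotient of difference modules requires.

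The paper circumvents this without reducing to the connected case: writing $H^0=\prod_i H_i$ and $G^0=\prod_{i\in I}H_i(C)\prod_{i\in J}H_i(C_0)$ as Cassidy \cite{Cassidy89} provides, it shows that $\prod_{i\in I}H_i$ is normal in all of $H$ by a differential-type argument (conjugation by $g\in H$ permutes the minimal factors and cannot carry a factor of differential type $1$ into one of type $0$), and then quotients the full, possibly disconnected, group by this normal subgroup. The quotient is a genuine $\sigma$-Galois group of an equation over $C_0(x)$; its identity component is $\delta$-constant, so that equation is integrable, and Theorem~\ref{thm4} forces $J=\emptyset$. If you replace your $\sigma^t$ reduction by this normality argument, the remainder of your proof goes through.
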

\begin{proof} Proposition~\ref{propdense} implies that the $\sigma\delta$-Galois group $G$ of this equation is Zariski-dense in {$H$}. In particular, the Kolchin-connected component $G^0$ of $G$ is Zariski-dense in the Zariski-connected component $H^0$ of $H$.   Let $H^0(C)= \prod_{i=1}^dH_i(C), \text{where the  } H_i$ are the minimal closed connected normal subgroups of  $H^0$ as in \cite[Theorem 27.5]{humphreys} ({we emphasize that} this is not a direct product, just a product). 
Cassidy \cite[Thms.~19 and 20]{Cassidy89} has shown that a Zariski-dense differential algebraic subgroup $G^0$ of $H^0$  is  conjugate to a group of the form $\prod_{i=1}^dG_i$ where for each $i$ {either} $G_i = H_i(C)$ or $G_i =H_i(C_0)$. We write this product as
\[G^0 = \prod_{i \in I} G_i\prod_{i\in J} G_i,\]
where $G_i = H_i(C)$ for $i\in I$ and $G_i = H_i(C_0)$ for $i \in J$. 

We claim that $ \prod_{i \in I} G_i$ is normal in $G$. For any $g \in H$, {we have that} $gG_i(C)g^{-1}$ is again a minimal normal simple subgroup of $H^0$, and so 
$gG_i(C)g^{-1} = G_j(C)$ for some $j$.   We have {that} $G'= G_j\cap \prod_{\ell\neq j}{G_\ell}$ is finite \cite[Thm.~27.5]{humphreys}.   Therefore if $j \in J$ then $gG_ig^{-1}$ lies in $G' \cdot G_j(C_0)$.  This latter group has differential type $0$, contradicting the fact that $G_j(C)$ has differential type $1$. Therefore $ \prod_{i \in I} G_i$ is normal in $G$.

We now claim that $J$ is empty.  The Galois correspondence implies that there is a difference equation $\sigma(Y) = A_0Y, \, A_0\in \mathrm{GL}_m(k)$ with $\sigma$-Galois group $\tilde{H}=H/\prod_{i\in I}H_i$ and $\sd$-Galois group $\tilde{G} = G/\prod_{i \in I} G_i$. We may assume without loss of generality that the subgroup $\prod_{i\in I} H_i$ is defined over $C_0$. The Tannakian theory implies that the difference module associated to $\sigma(Y)=A_0Y$ can be obtained from the difference module associated to $\sigma(Y)=AY$ using only constructions of linear algebra over $C_0(x)$. Therefore, we may assume that $A_0 \in \GL_m(C_0(x))$.

The identity component $\tilde{G}^0$ of $\tilde{G}$ is the quotient of a connected $\delta$-constant semisimple group and so is $\delta$-constant as well (cf.\cite[Cor.~1]{Ov08} or \cite[Thm.~4.6]{GoOv14}). Since $G^0$ is $\delta$-constant, we have that $G$ is $\delta$-constant as well \cite[Prop.~2.48]{HMO15}, and so $\sigma(Y) = A_0Y$ is integrable.  Theorem~\ref{thm4} implies that  the $\sigma\delta$-Galois group of an integrable equation cannot be of this form. Therefore $J$ is empty and the $\sigma\delta$-Galois group is also $H(C)$. \end{proof}

We can now prove the following. If $H$ is a linear algebraic group we denote the derived subgroup of the identity component of $H$ by $H^{0,der}$.

\begin{thm}\label{thmSQM} Let $C$, $C_0$, $k$, $\sigma$, and $\delta$ be as in one of the cases S, Q, or M. Let $H$ be the $\sigma$-Galois group over $C_0(x)$ of

\begin{equation}\label{DHReq}\sigma(Y) = AY, \ \ A \in \GL_n(C_0(x)).\end{equation}

Assume that $H^0$ is a reductive group. Then, the $\sd$-Galois group of this equation over $k$ is a subgroup of $H(C)$ containing $H^{0,der}(C)$.
\end{thm}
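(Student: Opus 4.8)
The plan is to reduce to the semisimple case already handled in Lemma~\ref{semilem0} by passing to the derived subgroup, exploiting that reductivity of $H^0$ means $H^{0,der}$ is semisimple and $H^0=R(H^0)\cdot H^{0,der}$ with finite intersection. Throughout, let $G\subseteq\GL_n(C)$ denote the $\sigma\delta$-Galois group of \eqref{DHReq} over $k$. The containment $G\subseteq H(C)$ is the easy half: by base change from $C_0$ to $C$, exactly as in the proof of Theorem~\ref{thm4} via \cite[Prop.~2.4 and Cor.~2.5]{CHS}, the $\sigma$-Galois group of \eqref{DHReq} over $k$ is $H(C)$, and Proposition~\ref{propdense}(2) exhibits $G$ as a Zariski-dense subgroup of it. In particular $G\subseteq H(C)$ and $G^0$ is Zariski-dense in $H^0$.

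The substance is the reverse containment $H^{0,der}(C)\subseteq G$. Set $R:=R(H^0)$ and $D:=H^{0,der}$; both are characteristic in $H^0$, hence normal in $H$, and both are defined over $C_0$ since $C_0$ is perfect. The quotient $\bar H:=H/R$ has semisimple identity component $\bar H^0=H^0/R$, and the restriction $\pi|_D\colon D\to\bar H^0$ of the quotient map $\pi\colon H\to\bar H$ is an isogeny, i.e.\ surjective with finite kernel $D\cap R$. Exactly as in the proof of Lemma~\ref{semilem0}, the Tannakian formalism produces a difference equation $\sigma(Y)=A_0Y$ with $A_0\in\GL_m(C_0(x))$ whose $\sigma$-Galois group over $C_0(x)$ is $\bar H$, and whose $\sigma\delta$-Galois group over $k$ is the image $\pi(G)$. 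Since $\bar H^0$ is semisimple, Lemma~\ref{semilem0} gives $\pi(G)=\bar H(C)$, and taking Kolchin-connected components yields $\pi(G^0)=\bar H^0(C)$.

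To conclude I would pass to derived subgroups. Writing $\mathcal{D}(\cdot)$ for the (Kolchin-closed and Kolchin-connected) differential-algebraic derived subgroup, one has $\mathcal{D}(G^0)\subseteq D(C)$, since commutators of elements of $G^0\subseteq H^0(C)$ lie in $D(C)$. Because $\pi$ restricts to a surjective homomorphism $G^0\twoheadrightarrow\pi(G^0)=\bar H^0(C)$ and $\bar H^0(C)$ is semisimple, we get $\pi(\mathcal{D}(G^0))=\mathcal{D}(\bar H^0(C))=\bar H^0(C)$. On the other hand $\pi|_{D(C)}\colon D(C)\to\bar H^0(C)$ is surjective with finite kernel $(D\cap R)(C)$, so $\mathcal{D}(G^0)$ has finite index in $D(C)$. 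As $\mathcal{D}(G^0)$ is Kolchin-closed and $D(C)$ is Kolchin-connected, a Kolchin-closed finite-index subgroup must be the whole group; hence $\mathcal{D}(G^0)=D(C)=H^{0,der}(C)$. Since $\mathcal{D}(G^0)\subseteq G$, this gives $H^{0,der}(C)\subseteq G\subseteq H(C)$, as required.

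I expect the main obstacle to be the middle paragraph: verifying that the semisimple quotient $\bar H=H/R$ is genuinely realized by a difference equation over $C_0(x)$, so that Lemma~\ref{semilem0} applies, and correctly identifying its $\sigma\delta$-Galois group over $k$ with the image $\pi(G)$. Once $\pi(G)=\bar H(C)$ is in hand, the transfer of ``full $C$-points'' through the isogeny $\pi|_D$ and the finite-index-plus-connectedness conclusion are formal. As an alternative to this last step, one could instead invoke Cassidy's classification \cite[Thms.~19 and 20]{Cassidy89} of Zariski-dense differential-algebraic subgroups of the semisimple group $D$ to rule out any simple factor occurring with $C_0$-points, but the derived-subgroup argument above sidesteps that analysis.
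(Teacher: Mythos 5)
Your proposal is correct, and its skeleton coincides with the paper's: both pass to a semisimple quotient of $H$ (you divide by the radical $R(H^0)$, the paper by the center $Z(H^0)$ --- for reductive $H^0$ these differ only by a finite group), realize that quotient as the $\sigma$-Galois group of a difference system over $C_0(x)$ by the same Tannakian/Galois-correspondence argument, and invoke Lemma~\ref{semilem0} to conclude that the $\sigma\delta$-Galois group of the quotient system is all of $\bar{H}(C)$. Where you genuinely diverge is in transporting this back to $H^{0,der}$. The paper sets $\tilde{G}=H^{0,der}\cap G$, proves it Zariski-dense in $H^{0,der}$ via Ree's surjectivity of the commutator map, applies Cassidy's classification \cite[Thms.~19 and 20]{Cassidy89} to write $\tilde{G}$ as a product of factors $H_i(C)$ or $H_i(C_0)$, and rules out any $C_0$-factor by the quotient computation. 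You instead take the Kolchin-closed derived subgroup $\mathcal{D}(G^0)\subseteq H^{0,der}(C)$, observe that it surjects onto $\bar{H}^0(C)$ because $\pi(G^0)=\bar{H}^0(C)$ is perfect as an abstract group, and finish using the finiteness of the isogeny kernel $D\cap R$ together with the fact that the connected group $H^{0,der}(C)$ has no proper Kolchin-closed subgroup of finite index. This endgame is cleaner --- it needs neither Ree's theorem nor Cassidy's classification, only the perfectness of semisimple groups over algebraically closed fields --- whereas the paper's route makes the lattice of possible intermediate groups explicit, in keeping with the classification theme of the rest of the paper. If you write this up, do record why $\pi(G^0)=\bar{H}^0(C)$: the image $\pi(G^0)$ is a Kolchin-closed subgroup of finite index in $\pi(G)=\bar{H}(C)$ contained in $\bar{H}^0(C)$, and a connected semisimple group admits no proper finite-index subgroup.
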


\begin{proof} Let $K$ be the total quotient ring of a $\sd$-Picard-Vessiot extension $R = k\{Z,\frac{1}{\det Z}\}_{\delta}$ of $k$ corresponding to (\ref{DHReq}), and let $G$ be its $\sd$-Galois group over $k$. Then $S = k[Z,\frac{1}{\det Z}]$ is a $\sigma$-Picard-Vessiot extension for this equation, and its total quotient ring $F$ is a subring of $K$ \cite[Cor.~20]{FSW10}. Since $H^0$ is reductive, we have that $H^0 = Z(H^0)\cdot H^{0,der}$, where $Z(H^0)$ is the center of $H^0$ and $H^{0,der}$ is semisimple.  

Let  $\tilde{G} = H^{0,der}\cap G$. We claim that $\tilde{G}$ is Zariski-dense in   {$H^{0,der}$}. To see this, note that $G^0 = G\cap H^0 $ is Zariski-dense in $H^0 $, and therefore $G^0 \times G^0 $ is Zariski-dense in $H^0 \times H^0 $.  By \cite[Thm.]{Ree64}, the map $\phi:(a,b)\mapsto aba^{-1}b^{-1}$ maps $H^0 \times H^0 $ surjectively onto $H^{0,der} $. Therefore $\tilde{G}  \supset  \phi( G^0 \times G^0 )$ is Zariski-dense in $H^{0,der} $.

We will argue by contradiction that $\tilde{G}  = H^{0,der} $. Since  $H^{0,der}$ is semisimple, we may write $H^{0,der}  = \prod_{i=1}^dH_i $ as in Lemma~\ref{semilem0}.  If $\tilde{G} $ is a proper subgroup of $H^{0,der} $, then we may assume that $\tilde{G}  =\prod_{i=1}^dG_i$, where for each $i$  either $G_i  = H_i $ or  else $G_i $ is conjugate to $H_i(C_0)$.  Assuming that $\tilde{G}  \neq H^{0,der} $, we have that some $G_i  = H_i(C_0)$.

 Since $Z(H^0)$ is normal in $H$, we can form $H' = H/Z(H^0)$. By the Galois correspondence, $H'$ is the $\sigma$-Galois group for a difference equation \begin{equation}\label{eq001}
\sigma(Y) = A'Y, \ \ A' \in \GL_m(C_0(x)).
\end{equation} To see this, we proceed as in the proof of Lemma~\ref{semilem0}: since $Z(H^0)$ is defined over $C_0$, a Tannakian argument shows that the difference module associated to \eqref{eq001} is obtained from the difference module associated to \eqref{DHReq} using only constructions of linear algebra over $C_0(x)$. Moreover, there is a $\sigma$-Picard-Vessiot ring $S'=k[Z',\frac{1}{\mathrm{det} \ Z'}]\subset F^{Z(H^0)}$, where $Z'$ is a fundamental solution for \eqref{eq001}, and we see that $Z(H^0)=\{\gamma\in H \ | \ \gamma(Z')=Z'\}$. It follows that the $\sigma\delta$-Galois group of the $\sigma\delta$-Picard-Vessiot ring $R'=k\{Z',\frac{1}{\mathrm{det} \ Z'}\}_\delta$ for \eqref{eq001} is precisely $G'=G/G\cap Z(H^0)$. Since $H'$ is a finite extension of the semisimple group $H^{0,der}/Z(H^0)\cap H^{0,der}$, Lemma~\ref{semilem0} implies that $G'=H'$. Since $G'$ is a finite extension of $\tilde{G}/\tilde{G}\cap Z(H^0)$, it is not possible for $\tilde{G}$ to be a proper subgroup of $H^{0,der}$. \end{proof}

Theorem~\ref{thmSQM} allows us to reprove and generalize the results of \cite{DHR15} and \cite{DHR16}. In \cite{DHR15}, the authors consider case M and show in Theorem 3.5 that if $H$ is a subgroup of $\GL_n(C)$ containing $\SL_n(C)$ and the $\sd$-Galois group of $\sigma(y) = \det(A)y$ is a subgroup of $C_0^\times$ then the $\sd$-Galois group of (\ref{DHReq})  is a subgroup of $C_0^\times\SL_n(C)$ containing $\SL_n(C)$.  Theorem~\ref{thmSQM} allows us to remove the restriction on the $\sd$-Galois group of $\sigma(y) = \det(A)y$ and {also show} that $C^\times\SL_n(C_0)$ cannot be a $\sd$-Galois group over $k$  for a difference system defined over $C_0(x)$ in cases S, Q, or M.

 In Part I of \cite{DHR16}, the authors consider case Q and show in Theorem 3.1 that if $H^{0,der}$ is an irreducible almost simple algebraic subgroup of $\SL_n(C)$ then the $\sd$-Galois group contains $H^{0,der}$. This result easily follows from, and is generalized by, Theorem~\ref{thmSQM}.  Furthermore, this latter theorem now extends these results to cases S and M.
 
 The hypertranscendence results of \cite{DHR15} and \cite{DHR16} (e.g., the hypertranscendence of certain generalized hypergeometric series and the generating functions of the Baum-Sweet and Rudin-Shapiro sequences)  follow as indicated in these papers.  {Similarly}, Theorem~\ref{thmSQM} can be used to show hypertranscendence results in case S.

\begin{eg} Let $C$, $C_0$, $k$, $\sigma$, and $\delta$ be as in  case S.  In \cite[Lem.~3.9]{PuSi}, the authors show that the equation
\[Y(x+1) = \begin{pmatrix}0 & -1\\ 1 & a\end{pmatrix} Y(x),\]
 where $a \in C_0[x]$ {and} $a(0) = 0$, has $\sigma$-Galois group $\SL_2(C_0)$ over $C_0(x)$.  The above results show that the $\sd$-Galois group is $\SL_2(C)$ and so the differential transcendence degree of the associated  $\sd$-Picard-Vessiot extension is $3$.  Therefore if 
\[Z = \begin{pmatrix} z_{1,1} & z_{1,2} \\z_{2,1} & z_{2,2}\end{pmatrix}\]
is a fundamental solution matrix in any $\sd$-Picard-Vessiot extension {then} any three of the entries of $Z$ are differentially independent.

Furthermore, {in} \cite[Lem.~3.9]{PuSi} the authors show that for $A = {\rm diag}(A_1, \ldots , A_m)$, where 
\[A_i = \begin{pmatrix} 0 & -1 \\ 1 & x^i \end{pmatrix},\]
the equation $Y(x+1) = A(x) y(x)$ has $\sigma$-Galois group $\SL_2(C_0)^m$ over $C_0(x)$. Therefore the differential transcendence degree of the associated $\sd$-Picard-Vessiot extension is $3m$. This implies that {if} $K$ is a $\sd$-Picard-Vessiot extension of $C_0(x)$ {and} $Z_\ell = (z_{i,j}^{\ell}) \in \SL_2(K)$ satisfies {$Z_\ell(x+1) = A_\ell(x) Z_\ell(x)$ for each $\ell = 1, \ldots , m$} then the elements \[\{z_{1,1}^{1}, z_{1,2}^{1}, z_{2,1}^{1}, z_{1,1}^{2}, z_{1,2}^{2}, z_{2,1}^{2}, \ldots , z^m_{1,1},z^m_{1,2},z^{m}_{2,1}\}\] are differentially independent.
\end{eg}

\newcommand{\SortNoop}[1]{}\def\cprime{$'$} \def\cprime{$'$} \def\cprime{$'$}
  \def\cprime{$'$}

\end{document}